\documentclass[14pt]{article}
\usepackage{mathpazo}
\usepackage{amsmath}
\usepackage{amsfonts}
\usepackage{array,color}
\usepackage{abstract}
\usepackage[bottom]{footmisc}
\usepackage[left=1.15in, right=1.15in, bottom=0.60in, includefoot]{geometry}
\usepackage{graphicx}
\usepackage{indentfirst}
\usepackage{amsthm}
\usepackage{mathrsfs}
\usepackage{makeidx}
\usepackage{url}
\usepackage{dsfont}
\usepackage{latexsym}
\usepackage{amsmath}
\usepackage{amssymb}
\usepackage{color}
\usepackage{mathtools}
\usepackage{hyperref}
\usepackage{tikz}
\usepackage{MnSymbol}
\numberwithin{equation}{subsection}

\newcommand{\G}{\Gamma}
\newcommand{\F}{\mathcal{F}(\mathcal M)}
\newcommand{\g}{\gamma}
\newcommand{\sg}{\sigma}
\newcommand{\mc}{\mathbb{C}}
\newcommand{\ca}{\curvearrowright}

\newcommand{\emm}{\mathcal{M}}

\newcommand{\enn}{\mathcal N}
\newcommand{\euu}{\mathcal{U}}

\newcommand{\El}{\mathcal{L}}

\newcommand{\rar}{\rightarrow}
\newcommand{\La}{\Lambda}

\newcommand{\Aa}{\mathcal{A}}
\newcommand{\bee}{\mathcal{B}}

\newcommand{\mg}{\mathcal G}
\newcommand{\mtil}{\tilde {\mathcal M}}
\newcommand{\mr}{\mathcal{R}}
\newcommand{\mrt}{\tilde{\mathcal{R}}}
\newcommand{\pee}{\mathcal{P}}


\newcommand{\id}{\operatorname{id}}
\newcommand{\Diag}{\operatorname{Diag}}
\newcommand{\Sp}{\operatorname{Sp}}
\newcommand{\GL}{\operatorname{GL}}
\newcommand{\T}{\operatorname{(T)}}

\begin{document}
	\newtheorem{Lemma}{Lemma}
	\theoremstyle{plain}
	\newtheorem{theorem}{Theorem~}[section]
	\newtheorem{main}{Main Theorem~}
	\newtheorem{lemma}[theorem]{Lemma~}
	\newtheorem{assumption}[theorem]{Assumption~}
	\newtheorem{proposition}[theorem]{Proposition~}
	\newtheorem{corollary}[theorem]{Corollary~}
	\newtheorem{definition}[theorem]{Definition~}
	\newtheorem{defi}[theorem]{Definition~}
	\newtheorem{notation}[theorem]{Notation~}
	\newtheorem{example}[theorem]{Example~}
	\newtheorem*{remark}{Remark~}
	\newtheorem{cor}{Corollary~}
	\newtheorem*{question}{Question}
	\newtheorem{claim}{Claim}
	\newtheorem*{conjecture}{Conjecture~}
	\newtheorem*{fact}{Fact~}
	\newtheorem*{thma}{Theorem A}
	\newtheorem*{corc}{Corollary C}
	\newtheorem*{thmb}{Theorem B}
	\renewcommand{\proofname}{\bf Proof}
	\newcommand{\email}{Email: }

	\title{Examples of property (T) II$_1$ factors with trivial fundamental group}
	\author{\stepcounter{footnote}Ionut Chifan\thanks{I.C. has been supported in part by the NSF grants DMS-1600688, FRG DMS-1854194 and a CDA award from the University of Iowa},\hspace{0.07in} Sayan Das, \stepcounter{footnote}Cyril Houdayer\thanks{CH is supported by ERC Starting Grant GAN 637601, Institut Universitaire de France, and FY2019 JSPS Invitational Fellowship for Research in Japan (long term)}\hspace{0.07in} and Krishnendu Khan}
	\date{}
	\maketitle
	\begin{abstract}
		In this article we provide the first examples of property (T) $\rm II_1$ factors $\mathcal N$ with trivial fundamental group, $\mathcal F (\mathcal N)=1$. Our examples arise as group factors $\mathcal N=\mathcal L(G)$ where $G$ belong  to two distinct families of property (T) groups previously studied in the literature: the groups introduced by Valette in \cite{Va04} and the ones introduced recently in \cite{CDK19} using the Belegradek-Osin Rips construction from \cite{BO06}. In particular, our results provide a continuum of explicit pairwise non-isomorphic property (T) factors.   
	\end{abstract}
	
	\section{Introduction}

	Motivated by their continuous dimension theory, Murray and von Neumann introduced the notion of $t$-by-$t$ matrix over a II$_1$ factor $\emm$, for any positive real number $t>0$, \cite{MvN43}. This is a II$_1$ factor denoted by $\emm^t$ and called the $t$-amplification of $\emm$. When $ t\leq 1$ this the isomorphism class of $p\emm p$ for a projection $p\in \emm$ of trace $\tau(p)=t$ and when $1<t$ it its the isomorphism class of $p (M_n(\mathbb C) \otimes \emm)p$ for an integer $n$ with $t/n \leq 1$ and a projection $p\in M_n(\mathbb C) \otimes \emm$ of trace $(Tr_n\otimes \tau)(p)=t/n$. One can see that up to isomorphism the $\emm^t$ does not depend on $n$ or $p$ but only on the value of $t$.  
	
	The fundamental group, $\mathcal F(\emm)$, of a II$_1$ factor $\emm$ is the set of all $t>0$ such that $\emm^t\cong \emm$. Since for any $s,t>0$ we have $(\emm^s)^t\cong \emm^{st}$ then one can see $\mathcal F(\emm)$ forms a subgroup of $\mathbb R_+$. As the fundamental group is an isomorphism invariant of the factor, its study is of central importance to the theory of von Neumann algebras. In \cite{MvN43} Murray and von Neumann were able to show  that the fundamental group of the hyperfinite $\rm II_1$ factor $\mr$ satisfies $\mathcal F(\mr)= \mathbb R_{+}$. This also implies that $\F = \mathbb R_{+} $ for all McDuff factors $\emm$. However, besides this case no other calculations were available for an extended period of time and Murray-von Neumann's original question whether $\F$ could be different from $\mathbb R_{+}$ for some factor $\emm$ remained wide open (see \cite[page 742]{MvN43} and the discussions in \cite{Po20}).  
	\vskip 0.09in
	
	A breakthrough in this direction emerged from Connes' discovery in \cite{Co80} that the fundamental group of a  group factor $\mathcal F(\El(G))$ reflects rigidity aspects of the underlying group $G$, being countable whenever $G$ has property (T) of Kazdhan \cite{Kaz67}. This finding also motivated him to formulate his famous Rigidity Conjecture in \cite{Co82} along with other problems on computing symmetries of property (T) factors---that were highlighted in subsequent articles by other prolific mathematicians \cite[Problem 2, page 551]{Co94}, \cite[Problems 8-9]{Jo00} and \cite[page 9]{Po13}. Further explorations of Connes' idea in \cite{Po86, GN87,GG88,Po95} unveiled new examples of separable factors $\emm$ with  countable $\mathcal F(\emm)$, including examples for which $\mathcal F (\emm)$ contains prescribed countable sets. However despite these advances concrete calculations of fundamental groups remained elusive for more than two decades.  
	\vskip 0.09in
	The situation changed radically with the emergence of Popa's deformation/rigidity theory in early $2000$. Through this novel theory we have witnessed an  unprecedented progress towards complete calculations of fundamental groups. The first successes in this direction were achieved by Popa and include a series of striking results: examples of factors with trivial fundamental group \cite{Po01} which answers a long-standing open problem of Kadison \cite{K67} (see \cite[Problem 3]{Ge03}); examples of factors that have \emph{any} prescribed countable subgroup of $\mathbb R_+$ as a fundamental group \cite{Po03}.  An array of other powerful results on computations of fundamental groups were obtained subsequently \cite{IPP05,PV06,Io06,Va07,PV08,Ho09, IPV10, BV12}. Remarkably, in \cite{PV08} it was shown that many uncountable proper subgroups of $\mathbb R_{+}$ can be realized as fundamental groups of separable II$_1$ factors. 
	
	\vskip 0.09in
	However, despite  these impressive achievements, significantly less is known about the fundamental groups of property (T) factors as the prior results do not apply to these factors. In fact there is no explicit calculation of the fundamental group of any property (T) factor  available in the current literature. In this article we make progress on this problem by providing two independent classes of examples of property (T) icc groups $G$ whose factors $\El(G)$ have trivial fundamental group. In particular our results advance \cite[Problem 2, page 551]{Co94} and provide the first group examples satisfying the last conjecture on page 9  in Popa's list of open problems \cite{Po13}.

	\vskip 0.09in	
	Our first class of groups $G$ is based on a minor modification of a construction introduced by Valette in \cite{Va04}. For reader's convenience  briefly describe this construction. Denote by $\mathbb H$ the division algebra of quaternions and by $\mathbb H_{\mathbb Z}$ its lattice of integer points. Fix $n \geq 2$ and recall that $\Lambda_n = \Sp(n, 1)_{\mathbb Z}$ is a lattice in the  rank one connected simple real Lie group $\Sp(n, 1)$, \cite{BHC61}. Notice that $\Sp(n, 1)$ acts linearly on $\mathbb H^{n + 1} \cong \mathbb R^{4(n + 1)}$ in such a way that $\Lambda_n$ preserves $(\mathbb H_{\mathbb Z})^{n+1} \cong \mathbb Z^{4(n + 1)}$. Then the natural semidirect product $G_n =  \mathbb Z^{4(n + 1)} \rtimes \Lambda_n$ is an icc property (T) group. Consider $\mathscr V$ the collection of all groups of the form $G= G_{n_1}\times ...\times G_{n_k}$, where $n_i\geq 2$ and $k\in \mathbb N$.    
	Combining Gaboriau's $\ell^2$-Betti numbers invariants in orbit equivalence \cite{Ga02} with the powerful uniqueness of Cartan subalgebra results of Popa and Vaes \cite{PV12} we show the groups in $\mathscr V$ give rise to an infinite family of property (T) group factors with trivial fundamental group. In fact our proof relies on the same strategy developed Popa and Vaes in their seminal work  \cite{PV11} to show that  $\mathcal F(L^\infty(X)\rtimes \mathbb F_n)=1$. 
	
	\begin{thma} \label{mainthm1}
		The following properties hold: 
		\begin{itemize}
			\item [$(\rm i)$]  For every $G\in \mathscr V$ the fundamental group satisfies $\mathcal F(\El(G))=\{1\}$;
			\item [$(\rm ii)$] The family $\{\El(G) \,:\,G \in \mathscr V\}$ consists of pairwise stably non-isomorphic II$_1$ factors.
		\end{itemize}
	\end{thma}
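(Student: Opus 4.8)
The plan is to realise $\El(G)$ as a group measure space factor, reduce the isomorphism and amplification problems to orbit equivalence by means of uniqueness of the Cartan subalgebra, and then let Gaboriau's $\ell^2$-Betti number invariants do the bookkeeping --- the very scheme by which Popa and Vaes obtained $\mathcal F(L^\infty(X)\rtimes\mathbb F_n)=1$ in \cite{PV11}. First I would record that, after Fourier transform on $\mathbb Z^{4(n+1)}$, $\El(G_n)=L^\infty(\mathbb T^{4(n+1)})\rtimes\Lambda_n$, where $\Lambda_n$ acts on the torus by the action dual to its integral linear action on $\mathbb Z^{4(n+1)}$. This action is essentially free: for $g\in\Lambda_n\setminus\{1\}$ the fixed-point set $\{\xi:g\xi=\xi\}$ is a finite union of cosets of the proper subtorus $\overline{\ker(g-1)}$, hence $\mu$-null. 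It is also weakly mixing, because $\Sp(n,1)$ acts $\mathbb R$-irreducibly on $\mathbb R^{4(n+1)}$ and, by Borel density, $\Lambda_n$ has no finite orbit on $\mathbb Z^{4(n+1)}\setminus\{0\}$. Hence $A_n:=L^\infty(\mathbb T^{4(n+1)})$ is a Cartan subalgebra of $\El(G_n)$, and for $G=G_{n_1}\times\cdots\times G_{n_k}\in\mathscr V$ one gets $\El(G)=A\rtimes\Lambda$ with $A=A_{n_1}\bar\otimes\cdots\bar\otimes A_{n_k}$ and $\Lambda=\Lambda_{n_1}\times\cdots\times\Lambda_{n_k}$ acting by the product action, which is again free and ergodic (a product of weakly mixing actions is weakly mixing). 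Let $\mr_G$ denote the associated orbit equivalence relation, so that $\mr_G=\mr_{n_1}\times\cdots\times\mr_{n_k}$ with $\mr_{n_i}=\mr(\Lambda_{n_i}\ca\mathbb T^{4(n_i+1)})$.

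I would then compute $\ell^2$-Betti numbers. By Gaboriau's theorem \cite{Ga02}, $\beta^{(2)}_m(\mr_{n})=\beta^{(2)}_m(\Lambda_n)$ for every $m$, the actions being free and p.m.p.; and since the maximal compact subgroup $\Sp(n)\times\Sp(1)$ of $\Sp(n,1)$ has the same complex rank $n+1$, the only non-vanishing $\ell^2$-Betti number of the lattice $\Lambda_n$ is the middle one $\beta^{(2)}_{2n}(\Lambda_n)$, with $2n=\tfrac12\dim\big(\Sp(n,1)/(\Sp(n)\times\Sp(1))\big)$, and $0<\beta^{(2)}_{2n}(\Lambda_n)<\infty$. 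The Künneth formula for $\ell^2$-Betti numbers then gives $\beta^{(2)}_m(\mr_G)=\prod_{i=1}^{k}\beta^{(2)}_{2n_i}(\Lambda_{n_i})$ when $m=2(n_1+\cdots+n_k)$ and $\beta^{(2)}_m(\mr_G)=0$ otherwise; in particular $\mr_G$ carries exactly one non-zero $\ell^2$-Betti number, in degree $2(n_1+\cdots+n_k)$, and it is finite and strictly positive.

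For (i) I would invoke uniqueness of the Cartan subalgebra: each $\Lambda_{n_i}$ is a lattice in a rank-one simple Lie group, hence hyperbolic --- in particular bi-exact --- and weakly amenable by Cowling-Haagerup, so by the theorem of Popa-Vaes \cite{PV12} (in the form valid for free ergodic p.m.p.\ actions of direct products of bi-exact, weakly amenable groups) $A$ is the unique Cartan subalgebra of $\El(G)$ up to unitary conjugacy. Consequently any $\ast$-isomorphism $\El(G)^t\cong\El(G)$ carries $A^t$ onto $A$, hence implements an isomorphism $\mr_G^t\cong\mr_G$; Gaboriau's rescaling formula $\beta^{(2)}_m(\mr^s)=s^{-1}\beta^{(2)}_m(\mr)$ then forces, in degree $m=2(n_1+\cdots+n_k)$, the equality $t^{-1}\beta^{(2)}_m(\mr_G)=\beta^{(2)}_m(\mr_G)$ with $0<\beta^{(2)}_m(\mr_G)<\infty$, so $t=1$; hence $\mathcal F(\El(G))=\{1\}$.

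For (ii) the same reduction turns any isomorphism $\El(G)^t\cong\El(H)$, $H=G_{m_1}\times\cdots\times G_{m_l}$, into a stable orbit equivalence $\mr_G\cong_s\mr_H$ between two product equivalence relations, each factor of which comes from a free ergodic action of an ICC hyperbolic (hence bi-exact) group. Applying the rigidity of such products --- the essentially unique prime-product decomposition of orbit equivalence relations of products of bi-exact group actions, in the spirit of Monod-Shalom and Sako --- one obtains $k=l$, a permutation $\sigma$ of $\{1,\dots,k\}$, and scalars $t_i>0$ with $\prod_it_i=t$ for which $\mr_{n_i}^{t_i}\cong\mr_{m_{\sigma(i)}}$; comparing $\ell^2$-Betti numbers factorwise as above, the unique non-zero degrees must match, so $2n_i=2m_{\sigma(i)}$, and the rescaling formula then gives $t_i=1$. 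Thus $\{n_1,\dots,n_k\}=\{m_1,\dots,m_l\}$ as multisets, i.e.\ $G\cong H$, which is (ii). I expect the essential difficulty to lie entirely in this structural layer: verifying that \cite{PV12} applies to arbitrary free ergodic actions of direct products of lattices in $\Sp(n,1)$ --- these are weakly amenable but lack the complete metric approximation property, since $\Lambda_{cb}(\Sp(n,1))=2n-1>1$ --- and securing the product-rigidity statement for the associated orbit equivalence relations with exact control of the amplification constants $t_i$; once the reduction to stable orbit equivalence of the $\mr_{n_i}$ is in place, the $\ell^2$-Betti number counting is routine. An equivalent path for (ii) would instead use that each $\El(G_{n_i})$ is a prime II$_1$ factor (a crossed product of a bi-exact group by a free ergodic action), together with a unique prime factorization theorem for tensor products of such factors, in place of orbit-equivalence product rigidity.
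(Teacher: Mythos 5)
Your proposal is correct and follows essentially the same route as the paper: realize $\El(G)$ as the group measure space factor $L^\infty\bigl(\prod_i \mathbb T^{4(n_i+1)}\bigr)\rtimes \bigl(\prod_i \Lambda_{n_i}\bigr)$, invoke the Popa--Vaes Cartan rigidity theorems of \cite{PV12} to reduce both the fundamental group computation and the stable isomorphism question to the level of the orbit equivalence relation, and then conclude by combining Gaboriau's $\ell^2$-Betti number invariants \cite{Ga02} with Monod--Shalom style product orbit-equivalence rigidity \cite{MS02}. The difficulty you flag about CMAP is not actually an obstruction, since the relevant theorems of \cite{PV12} apply to free ergodic p.m.p.\ actions of non-elementary hyperbolic groups and of their finite direct products without any CMAP hypothesis, so lattices in $\Sp(n,1)$ are covered regardless of their Cowling--Haagerup constant; and your identification of the nonvanishing degree as $2n$, half the dimension $4n$ of the quaternionic hyperbolic space, is the correct reading of Borel's theorem.
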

	
	Our second class of groups $G$ were first introduced in \cite{CDK19} and rely on a Rips construction in geometric group theory developed by Belegradek and Osin in \cite{BO06}. For convenience we briefly recall this construction. Using Dehn filling results from \cite{Os06}, it was shown in \cite{BO06} that for every finitely generated group $Q$ one can find a property (T) group $N$ such that $Q$ embeds as a finite index subgroup of ${\rm Out}(N)$. This gives rise to an action $\sigma: Q \rar {\rm Aut}(N)$ such that the corresponding semidirect product group $N\rtimes_\sigma Q$ is hyperbolic relative to $\{Q\}$.  When $Q$ is torsion free one can pick $N$ to be torsion free as well and hence both $N$ and $N \rtimes_\sigma Q$ are icc. Moreover, when $Q$ has property (T) then $N \rtimes_\sigma Q$ has property (T). Throughout this article this semidrect product $N\rtimes_\sigma Q$ will be called the Belegradek-Osin Rips construction and denoted by $Rips(Q)$.  Our examples arise as fiber products of these Rips constructions.  Specifically, consider any two groups $N_1 \rtimes_{\sigma_1} Q, N_2 \rtimes_{\sigma_2} Q \in Rip(Q)$ and form the canonical fiber product $G = (N_1 \times  N_2) \rtimes _\sigma Q$ where $\sigma =(\sigma_1, \sigma_2)$ is the diagonal action. Notice that $G$ has property (T) and the class of all these groups will be denoted by $\mathscr S$.
	\vskip 0.09in	
	Developing a new technological interplay between methods in geometric group theory and Popa's deformation/rigidity theory which continues our prior investigations \cite{CDK19} we will show that the factors associated with groups in class $\mathscr S$ have trivial fundamental group. Specifically, using various technological outgrowths of prior methods \cite{Po03,Oz03,IPP05,Io06,IPV10,Io11,PV12,CIK13,KV15,CD19,CDK19} we are able to show the following more general statement:

	\begin{thmb} \label{mainthm2}
		Assume that $Q_1$, $Q_2$, $P_1$, $P_2$ are icc, torsion free, residually finite, hyperbolic property (T) groups. Let $Q=Q_1 \times Q_2$ and $P=P_1 \times P_2$ and consider any groups $(N_1\times N_2) \rtimes Q\in \mathscr S$  and $(M_1\times M_2) \rtimes P\in \mathscr S$. Let $p \in \mathscr P(\El(M_1\times M_2)\rtimes P)$ be a projection and let $\Theta: \El((N_1 \times N_2) \rtimes Q) \rightarrow p\El((M_1 \times M_2) \rtimes P)p$ be a $\ast$-isomorphism. 
		
		Then $p=1$ and one can find a $\ast$-isomorphism, $\Theta_i: \El(N_i) \rightarrow \El(M_i)$, a group isomorphism $\delta: Q \rightarrow P$, a multiplicative character $\eta: Q \rightarrow \mathbb T$, and a unitary $u \in \mathscr U( \El((M_1 \times M_2) \rtimes P))$ such that  for all $ \g \in Q$, $x_i \in \El(N_i)$ we have that $$ \Theta((x_1 \otimes x_2)u_{\g})= \eta(\g)u(\Theta_1(x_1) \otimes \Theta_2(x_2)v_{\delta(\g)})u^{\ast}. $$ In particular, if we denote by $G= (N_1 \times N_2) \rtimes Q$ then the fundamental group satisfies $\mathcal F(\El(G))= \{1\}$.
	\end{thmb}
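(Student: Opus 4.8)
The plan is to run Popa's deformation/rigidity machinery against the relative hyperbolicity built into the Belegradek--Osin construction \cite{BO06}, extending the analysis of \cite{CDK19}. Write $\mathcal A=\El(N_1)\bar\otimes\El(N_2)$ and $\mathcal B=\El(M_1)\bar\otimes\El(M_2)$ for the two ``Rips cores'', so that $\El(G)=\mathcal A\rtimes Q$ and, setting $H=(M_1\times M_2)\rtimes P$, $\El(H)=\mathcal B\rtimes P$. Because the Belegradek--Osin actions are faithful and the fibres are icc, both $\mathcal A\subseteq\El(G)$ and $\mathcal B\subseteq\El(H)$ are irreducible, regular subfactors; moreover the normal subgroups $N_i\trianglelefteq G$ and $M_i\trianglelefteq H$ give intermediate group factors $\El(N_i\rtimes Q)$, $\El(M_i\rtimes P)$, each the von Neumann algebra of a group hyperbolic relative to $\{Q\}$, resp.\ $\{P\}$, and one has $\El(M_1\rtimes P)\vee\El(M_2\rtimes P)=\El(H)$ with $\El(M_1\rtimes P)\cap\El(M_2\rtimes P)=\El(P)$. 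The whole proof amounts to recognizing this layered structure through $\Theta$.

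\textbf{Step 1 (locating the rigid pieces)} --- the crux. Using that $Q=Q_1\times Q_2$ and $P=P_1\times P_2$ are products of hyperbolic property (T) groups, I would first pin down $\Theta(\El(Q))$ inside $\El(H)$: the bi-exactness/quasi-cocycle deformations carried by the relatively hyperbolic groups $M_j\rtimes P$ (in the spirit of \cite{Oz03,IPP05}) combined with a spectral gap/property (T) argument rule out $\Theta(\El(Q))$ being ``mobile'', forcing $\Theta(\El(Q))\prec_{\El(H)}\El(M_j\rtimes P)$ for each $j$; a Popa--Vaes/Ioana dichotomy for subalgebras of relatively hyperbolic group factors \cite{Io06,PV12,CIK13}, together with unique prime factorization for products of hyperbolic icc groups (Ozawa--Popa, \cite{CIK13}) which forbids $\Theta(\El(Q))$ from being squeezed into a single $\El(M_j)$, then upgrades this to $\Theta(\El(Q))\prec_{\El(H)}\El(P)$. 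Bootstrapping one core factor at a time from $\El(Q)$ to the intermediate algebras via the fibre product identity above, and running the mirror argument for $\Theta^{-1}$, yields a unitary $u\in\mathscr U(\El(H))$ and a projection $p'\in\mathcal B$ with $u\,\Theta(\mathcal A)\,u^*=p'\mathcal Bp'$, $u\,\Theta(\El(N_i\rtimes Q))\,u^*=p'\El(M_i\rtimes P)p'$ and $u\,\Theta(\El(Q))\,u^*\subseteq p'\El(P)p'$. After replacing $\Theta$ by $\Ad(u)\circ\Theta$ we may assume these hold with $u=1$.

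\textbf{Step 2 (reconstructing the group data and $p=1$).} Since $\mathcal A\subseteq\El(G)$ is regular with trivial relative commutant, the matching of cores forces a compatibility between the crossed product decompositions of $\El(G)$ and of the relevant corner of $\El(H)$. The almost malnormality of $Q$ in each $N_i\rtimes Q$, together with torsion-freeness (which makes the generic stabilizers of the diagonal action $Q\curvearrowright\mathcal A$ trivial), shows that $Q\curvearrowright\mathcal A$ is mixing relative to the scalars; hence Popa's cocycle superrigidity for property (T) groups \cite{Po03,Io11,IPV10} applies and produces a genuine group isomorphism $\delta:Q\rar P$, a character $\eta:Q\rar\mathbb T$, and a unitary $u\in\mathscr U(\El(H))$ with $\Theta((x_1\otimes x_2)u_\g)=\eta(\g)\,u(\Theta_1(x_1)\otimes\Theta_2(x_2)v_{\delta(\g)})u^*$, where each $\Theta_i:\El(N_i)\rar\El(M_i)$ is obtained by restricting the core identification to the invariant subfactors picked out by the relative hyperbolicity of $N_i\rtimes Q$, $M_i\rtimes P$ and unique prime factorization. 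Finally, evaluating this formula at $\g=e$, $x_1=x_2=1$ gives $p=\Theta(1)=uu^*=1$.

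\textbf{Step 3 (fundamental group), and the main difficulty.} Apply the statement just proved with $H=G$: if $t\in\mathcal F(\El(G))$ and $t\le 1$, realize $\El(G)^t=p_0\El(G)p_0$ and apply the theorem to the isomorphism $\El(G)\cong p_0\El(G)p_0$ to get $p_0=1$, i.e.\ $t=1$; since $\mathcal F(\El(G))$ is a group, the case $t>1$ reduces to $t^{-1}<1$, so $\mathcal F(\El(G))=\{1\}$. The main obstacle is Step 1 --- locating $\El(Q)$ and then the Rips core $\mathcal A$ inside $\El(H)$, where the deformations coming from relative hyperbolicity must be carefully combined across the fibre product and played against property (T); a subsidiary delicate point, needed in Step 2, is checking that the diagonal Belegradek--Osin action is mixing enough to invoke cocycle superrigidity, which is where the residual finiteness and torsion-freeness hypotheses on the $Q_i$ and $P_j$ intervene.
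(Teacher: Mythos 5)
Your Step 1 is in the right spirit and broadly parallels the paper's Theorems~\ref{thm2}--\ref{actinggp}: the paper first proves a dichotomy for commuting property~(T) subfactors (using the Dehn filling short exact sequence $1\to \ast_{\gamma_j} Q_0^{\gamma_j}\to N_k\rtimes Q\to H\to 1$, \cite{PV12}, \cite{CIK13}, and \cite[Theorem 4.3]{IPP05}), rules out the ``wrong'' alternative for each of $\Theta(\El(N))$ and $\Theta(\El(Q))$, and then uses malnormality of $Q$ and a quasi-normalizer/Lemma~8.4~\cite{IPP05} argument to upgrade the intertwinings to honest unitary conjugacies. You underspecify the bootstrapping (the paper's path is core-first via $\Aa_i=\Theta(\El(N_i))$ in Theorem~\ref{coreunit}, then acting group via $\bee_i=\Theta(\El(Q_i))$ in Theorem~\ref{actinggp}), but the ingredients you name are the right ones.

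Step 2 is where you diverge, and this is a genuine gap. You invoke ``Popa's cocycle superrigidity'' to produce $\delta$ and $\eta$, but cocycle superrigidity in the usual sense (Popa's theorem for malleable p.m.p.\ actions) simply is not applicable here: the core $\El(N)=\El(N_1)\bar\otimes\El(N_2)$ is a nonamenable II$_1$ factor, not $L^\infty(X)$, so there is no underlying orbit equivalence relation and no malleable deformation of a Bernoulli type to exploit, and the Belegradek--Osin action $Q\curvearrowright N$ is not known to be malleable. What the paper actually uses, and what you are missing, is the \emph{height} machinery: Theorem~\ref{height} (a new technical result of the paper) shows that for an action with uniformly finite stabilizers on $N\setminus\{e\}$, any group of unitaries $\mathcal G\subseteq\El(G)$ normalizing a diffuse subalgebra that embeds stably into $\El(N)$ has $h_G(\mathcal G)>0$. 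After Step 1 one sets $\Gamma=u^*Pu$ and $\mathcal G=\{\Theta(u_g):g\in Q\}$, verifies (a) $h_\Gamma(\mathcal G)>0$ via Theorem~\ref{height}, (b) $\mathcal G''\nprec\El(C_\Gamma(g))$ for all $g\neq e$ using Lemma~\ref{centralizerstructure} and solidity of hyperbolic group factors \cite{Oz03}, and (c) weak mixing of $\{\Ad v\}_{v\in\mathcal G}$, and then applies the Krogager--Vaes theorem \cite[Theorem 4.1]{KV15} (Theorem~\ref{kv15} in the paper) to obtain $p=1$ and $u\mathcal G u^*\subseteq\mathbb T\Gamma$, from which $\delta$, $\eta$ and the tensor decomposition $\Theta_1\otimes\Theta_2$ are extracted. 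Note also that your claim to deduce $p=1$ ``by evaluating the formula at $\gamma=e$'' is circular --- $p=1$ is an \emph{output} of the Krogager--Vaes theorem, not something you can read off after the fact. Step 3 is fine once the rest is in place.

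In short: replace the cocycle-superrigidity appeal with the height argument (Theorem~\ref{height}) and the Krogager--Vaes reconstruction theorem (Theorem~\ref{kv15}); without these the proof does not go through.
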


	Concrete examples of countable families of pairwise non-isomorphic property (T) II$_1$ factors emerged from the prior fundamental works of Cowling-Hageerup \cite{CH89} and Ozawa-Popa \cite{OP03}. Additional examples were obtained more recently, \cite{CDK19}. Since $\F$ is countable whenever $\emm$ is a property (T) factor \cite{Co80, CJ}, it also follows there exist continuum many pairwise mutually non-isomorphic property (T) factors. But, however, to the best of our knowledge, no explicit constructions of such families exist in the literature till date. Our main Theorems A and B canonically provides such examples.

	\begin{corc}\label{uncountfactt}
		For any $G=N\rtimes Q \in \mathscr S$ or $G= G_1\times ...\times G_n\in \mathscr V$, the set of all amplifications $\{\El(G)^t\,:\, t\in (0,\infty)\}$ consists of pairwise non-isomorphic II$_1$ factors with property (T).
	\end{corc}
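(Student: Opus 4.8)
The plan is to read the corollary off from Theorems~A and B by combining triviality of the fundamental group with the elementary arithmetic of amplifications. The first step is purely formal: for any $\rm II_1$ factor $\emm$ and any $s,t>0$ one has $(\emm^s)^t\cong\emm^{st}$, so an isomorphism $\emm^s\cong\emm^t$ yields, after amplifying both sides by $1/t$, an isomorphism $\emm^{s/t}\cong\emm$, i.e.\ $s/t\in\mathcal F(\emm)$. Hence as soon as $\mathcal F(\El(G))=\{1\}$ we get that $\El(G)^s\cong\El(G)^t$ forces $s=t$, which is exactly the asserted pairwise non-isomorphism of the family $\{\El(G)^t:t\in(0,\infty)\}$.

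The second step is to supply the triviality of $\mathcal F(\El(G))$ in the two cases. If $G=G_1\times\cdots\times G_n\in\mathscr V$, this is precisely the content of Theorem~A, item~$(\mathrm i)$. If $G=(N_1\times N_2)\rtimes Q\in\mathscr S$, I would invoke Theorem~B with the same group in both roles, i.e.\ taking $P:=Q$, $M_i:=N_i$ (so that $(M_1\times M_2)\rtimes P$ is literally $G$) and letting $\Theta$ range over $\ast$-isomorphisms $\El(G)\to p\,\El(G)\,p$; the conclusion of Theorem~B then gives $p=1$, and since any isomorphism $\El(G)\cong\El(G)^t$ with $t\le 1$ is realized as such a corner map, this forces $t=1$, i.e.\ $\mathcal F(\El(G))=\{1\}$ (as indeed already recorded in the ``in particular'' clause of Theorem~B). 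The only point requiring a moment's care here is this specialization of Theorem~B from its stated generality (arbitrary projection $p$, possibly distinct input groups $P_i,M_i$) to the diagonal self-case.

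The third step handles property (T). Every group occurring in $\mathscr V$ or $\mathscr S$ is icc with property (T), so $\El(G)$ is a $\rm II_1$ factor with property (T) in the sense of Connes--Jones \cite{CJ}. Property (T) for $\rm II_1$ factors is a stable-isomorphism invariant: it passes to corners $p\emm p$ and to matrix amplifications $M_k(\mathbb C)\otimes\emm$, hence to every amplification $\emm^t$. Consequently each $\El(G)^t$ is again a property (T) $\rm II_1$ factor, and assembling the three steps yields the corollary. I do not anticipate any real obstacle: Theorems~A and B carry all the weight, and what remains is the bookkeeping described above.
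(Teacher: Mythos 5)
Your proposal is correct and takes essentially the same route as the paper, which states the corollary without proof, treating it as an immediate consequence of Theorems~A and~B. The three pieces you assemble---the elementary observation that trivial fundamental group forces $\El(G)^s\cong\El(G)^t\Rightarrow s=t$, the citation of Theorem~A(i) and the ``in particular'' clause of Theorem~B for $\mathcal F(\El(G))=\{1\}$, and stability of property~(T) under amplification via Connes--Jones---are precisely the intended bookkeeping.
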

	
	It is very plausible that, with the exception of countably many, the factors outlined in Corollary C do not appear as group factors. Therefore producing uncountable families of pairwise non-isomorphic property (T) group factors remains an open problem. In   \cite[Corollary 6.4]{CDK19} it was proposed a method to address this problem but it relies on constructing uncountably many icc, residually finite, torsion free, property (T) groups, \cite[Notation, Proposition 6.3]{CDK19}. While this seems possible with the current methods in geometric group theory there is no explicit work in the literature in this direction. 
	
	\section{Preliminaries}
	\subsection{Notations and Terminology}
	
	\vskip 0.05in	
	Throughout this document all von Neumann algebras are denoted by calligraphic letters e.g.\ $\mathcal A$, $\mathcal B$, $\mathcal M$, $\mathcal N$, etc. Given a von Neumann algebra $\mathcal M$ we will denote by $\mathscr U(\mathcal M)$ its unitary group, by $\mathscr P(\mathcal M)$ the set of all its nonzero projections and by $(\mathcal M)_1$ its unit ball.  Given a unital inclusion $\mathcal N\subseteq \mathcal M$ of von Neumann algebras we denote by $\mathcal N'\cap \mathcal M =\{ x\in \emm \,:\, [x, \enn]=0\}$.  We also denote by $\mathscr N_\emm(\enn)=\{ u\in \mathscr U(\emm)\,:\, u\enn u^*=\enn\}$ the normalizing group. We also denote the quasinormalizer of $\enn$ in $\emm$ by $\mathscr {QN}_{\mathcal M}(\mathcal N)$. Recall that $\mathscr {QN}_{\mathcal M}(\mathcal N)$ is the set of all $x\in\mathcal M$ for which there exist $x_1,x_2,...,x_n \in \mathcal M$ such that $\mathcal N x\subseteq \sum_i x_i \mathcal N$ and $x\mathcal N \subseteq \sum_i  \mathcal N x_i$ (see \cite[Definition 4.8]{Po99}).
	\vskip 0.03in
	All von Neumann algebras $\emm$ considered in this document will be tracial, i.e.\ endowed with a unital, faithful, normal linear functional $\tau:M\rightarrow \mathbb C$  satisfying $\tau(xy)=\tau(yx)$ for all $x,y\in \emm$. This induces a norm on $\emm$ by the formula $\|x\|_2=\tau(x^*x)^{1/2}$ for all $x\in \emm$. The $\|\cdot\|_2$-completion of $\emm$ will be denoted by $L^2(\emm)$.  For any von Neumann subalgebra $\mathcal N\subseteq \mathcal M$ we denote by $E_{\mathcal N}:\mathcal M\rightarrow \mathcal N$ the $\tau$-preserving condition expectation onto $\mathcal N$. We denote the orthogonal projection from $L^2(\emm) \rightarrow L^2(\enn)$ by $e_{\enn}$. The Jones' basic construction \cite[Section 3]{Jo83} for $\enn \subseteq \emm$ will be denoted by $\langle \emm, e_{\enn} \rangle$.  
	\vskip 0.05in
	For any group $G$ we denote by $(u_g)_{g\in G} \subset \mathscr U(\ell^2G)$ its left regular representation, i.e.\ $u_g(\delta_h ) = \delta_{gh}$ where $\delta_h:G\rightarrow \mathbb C$ is the Dirac function at $\{h\}$. The weak operatorial closure of the linear span of $\{ u_g\,:\, g\in G \}$ in $\mathscr B(\ell^2 G)$ is called the group von Neumann algebra and will be denoted by $\El(G)$; this is a II$_1$ factor precisely when $G$ has infinite non-trivial conjugacy classes (icc). If $\mathcal M$ is a tracial von Neumann algebra and $G \ca^\sigma \mathcal M$ is a trace preserving action we denote by $\mathcal M \rtimes_\sigma G$ the corresponding cross product von Neumann algebra \cite{MvN37}. For any subset $K\subseteq G$ we denote by $P_{\mathcal M K}$  the orthogonal projection from the Hilbert space $L^2(\mathcal M \rtimes G)$ onto the closed linear span of $\{x u_g \,|\, x\in \mathcal M, g\in K\}$. When $\mathcal M$ is trivial we will denote this simply by $P_K$.  
	\vskip 0.05in
	All groups considered in this article are countable and will be denoted by capital letters $A$, $B$, $G$, $H$, $Q$, $N$ ,$M$,  etc. Given groups $Q$, $N$ and an action $Q\curvearrowright^{\sg} N$ by automorphisms we denote by $N\rtimes_\sigma Q$ the corresponding semidirect product group. For any $n\in N$ we denote by ${\rm Stab}_Q(n)=\{ g\in Q\,:\, \sigma_g(n)=n\}$. Given a group inclusion $H \leqslant G$ sometimes we consider the centralizer $C_G(H)$ and the  virtual centralizer $vC_G(H)=\{g\in G \,:\, |g^{H}|<\infty\} $. We also denote by $\llangle H\rrangle$ the normal closure of $H$ in $G$.

	\subsection{Popa's Intertwining Techniques} Over more than fifteen years ago, Popa  introduced  in \cite [Theorem 2.1 and Corollary 2.3]{Po03} a powerful analytic criterion for identifying intertwiners between arbitrary subalgebras of tracial von Neumann algebras. Now this is known in the literature  as \emph{Popa's intertwining-by-bimodules technique} and has played a key role in the classification of von Neumann algebras program via Popa's deformation/rigidity theory.

	\begin {theorem}\cite{Po03} \label{corner} Let $(\mathcal M,\tau)$ be a separable tracial von Neumann algebra and let $\mathcal P, \mathcal Q\subseteq \mathcal M$ be (not necessarily unital) von Neumann subalgebras. 
	Then the following are equivalent:
	\begin{enumerate}
		\item There exist $ p\in  \mathscr P(\mathcal P), q\in  \mathscr P(\mathcal Q)$, a $\ast$-homomorphism $\theta:p \mathcal P p\rightarrow q\mathcal Q q$  and a partial isometry $0\neq v\in q \mathcal M p$ such that $\theta(x)v=vx$, for all $x\in p \mathcal P p$.
		\item For any group $\mathcal G\subset \mathscr U(\mathcal P)$ such that $\mathcal G''= \mathcal P$ there is no sequence $(u_n)_n\subset \mathcal G$ satisfying $\|E_{ \mathcal Q}(xu_ny)\|_2\rightarrow 0$, for all $x,y\in \mathcal  M$.
		\item There exist finitely many $x_i, y_i \in \mathcal M$ and $C>0$ such that  $\sum_i\|E_{ \mathcal Q}(x_i u y_i)\|^2_2\geq C$ for all $u\in \mathcal U(\mathcal P)$.
	\end{enumerate}
\end{theorem} 
\vskip 0.02in
\noindent If one of the three equivalent conditions from Theorem \ref{corner} holds then we say that \emph{ a corner of $\mathcal P$ embeds into $\mathcal Q$ inside $\mathcal M$}, and write $\mathcal P\prec_{\mathcal M}\mathcal Q$. If we moreover have that $\mathcal P p'\prec_{\mathcal M}\mathcal Q$, for any projection  $0\neq p'\in \mathcal P'\cap 1_{\mathcal P} \mathcal M 1_{\mathcal P}$ (equivalently, for any projection $0\neq p'\in\mathscr Z(\mathcal P'\cap 1_{\mathcal P}  \mathcal M 1_{P})$), then we write $\mathcal P\prec_{\mathcal M}^{s}\mathcal Q$. We refer the readers to the survey papers \cite{Po07, Va10icm, Io18icm} for recent progress in von Neumann algebras using deformation/rigidity theory.
\vskip 0.02in
We also recall the notion of relative amenability introduced by N. Ozawa and S. Popa. Let $(\emm, \tau) $ be a tracial von Neumann algebra. Let $p \in \emm$ be a projection, and let $\mathcal P \subseteq p \emm p$, and $\mathcal Q \subseteq \emm$ be von Neumann subalgebras. Following \cite[Definition 2.2]{OP07}, we say that $\mathcal P$ is amenable relative to $\mathcal Q$ inside $\emm$, if there exists a positive linear functional $\phi: p \langle \emm, e_{\mathcal Q} \rangle p \rightarrow \mc$ such that $\phi|_{p \emm p}= \tau$ and $\phi(xT)=\phi(Tx)$ for all $T \in \mathcal Q$ and all $x \in \mathcal P$. If $\mathcal P$ is amenable relative to $\mathcal Q$ inside $\emm$, we write $\mathcal P \lessdot_{\emm} \mathcal Q$.
\vskip 0.02in
For further use we record the following result which controls the intertwiners in algebras arsing form malnormal subgroups. Its proof is essentially contained in \cite[Theorem 3.1]{Po03} so it will be left to the reader. 	  
\begin{lemma}[Popa \cite{Po03}]\label{malnormalcontrol}Assume that $H\leqslant G$ be an almost malnormal subgroup and let $G \ca \mathcal N$ be a trace preserving action on a finite von Neumann algebra $\mathcal N $. Let $\mathcal P \subseteq \mathcal N \rtimes H$ be a von Neumann algebra such that $\mathcal P\nprec_{\mathcal N\rtimes H}  N$. Then for every elements $x,x_1,x_2,...,x_l \in \mathcal N\rtimes G$ satisfying $\mathcal P x\subseteq \sum^l_{i=1} x_i \mathcal P$ we must have that $x\in \mathcal N\rtimes H$. 
\end{lemma}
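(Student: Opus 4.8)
Set $\mathcal{M}=\mathcal{N}\rtimes G$ and $\mathcal{M}_H=\mathcal{N}\rtimes H$. The plan is to argue by contraposition: I will assume that some $x\in\mathcal{M}$ with $\mathcal{P}x\subseteq\sum_{i=1}^{l}x_i\mathcal{P}$ fails to lie in $\mathcal{M}_H$, and deduce $\mathcal{P}\prec_{\mathcal{M}_H}\mathcal{N}$, contradicting the hypothesis $\mathcal{P}\nprec_{\mathcal{N}\rtimes H}\mathcal{N}$. First I would decompose $L^2(\mathcal{M})=\bigoplus_{c\in H\backslash G/H}\mathcal{H}_c$, where $\mathcal{H}_c$ denotes the closed linear span of $\{a u_s : a\in\mathcal{N},\, s\in c\}$ and $\mathcal{H}_H=L^2(\mathcal{M}_H)$. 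Since $x\notin\mathcal{M}_H$, there is a double coset $c=HgH$ with $g\notin H$ and $\eta:=P_{\mathcal{H}_c}(x)\neq 0$; fix such a $c$.

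Next, because $HcH=c$, the orthogonal projection $P_{\mathcal{H}_c}$ commutes with both left and right multiplication by $\mathcal{M}_H$, hence by $\mathcal{P}$. Applying $P_{\mathcal{H}_c}$ to $\mathcal{P}x\subseteq\sum_i x_i\mathcal{P}$ then gives $\mathcal{P}\,\eta\subseteq\sum_{i=1}^{l}\eta_i\,\mathcal{P}$ with $\eta_i:=P_{\mathcal{H}_c}(x_i)\in\mathcal{H}_c$. Thus the nonzero left $\mathcal{P}$-module $\overline{\mathcal{P}\,\eta}$ is contained in the right $\mathcal{P}$-module $\mathcal{W}:=\overline{\sum_{i=1}^{l}\eta_i\,\mathcal{M}_H}$, which is finitely generated over $\mathcal{M}_H$ and lies inside $\mathcal{H}_c$; equivalently, writing $e=P_{\mathcal{W}}$, for every $u\in\mathscr{U}(\mathcal{P})$ one has $e\,u\eta=u\eta$ and hence $\langle e\,u\eta,\,u\eta\rangle=\|\eta\|_2^2$.

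The heart of the matter is to control the $\mathcal{M}_H$-bimodule $\mathcal{H}_c$. Because $g\notin H$ and $H\leqslant G$ is almost malnormal, the subgroup $K:=gHg^{-1}\cap H$ is finite; consequently, writing $c=\bigsqcup_{i\in H/K}a_igH$, one checks that $\mathcal{H}_c$ is isomorphic, as an $\mathcal{M}_H$–$\mathcal{M}_H$ bimodule, to $L^2(\mathcal{M}_H)\otimes_{\mathcal{B}}L^2(\mathcal{M}_H)$ for the intermediate algebra $\mathcal{N}\subseteq\mathcal{B}\subseteq\mathcal{M}_H$ associated with $K$ (up to conjugating one side by the automorphism $\sigma_g$, which is harmless). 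Since $K$ is finite, $\mathcal{N}\subseteq\mathcal{B}$ has finite index, so this bimodule embeds $\mathcal{M}_H$-bimodularly into $L^2(\langle\mathcal{M}_H,e_{\mathcal{N}}\rangle)\cong L^2(\mathcal{M}_H)\otimes_{\mathcal{N}}L^2(\mathcal{M}_H)$, the Hilbert space of the basic construction for $\mathcal{N}\subseteq\mathcal{M}_H$. Transporting the conclusion of the previous paragraph through this embedding produces a nonzero left $\mathcal{P}$-submodule of $L^2(\langle\mathcal{M}_H,e_{\mathcal{N}}\rangle)$ sitting inside a right $\mathcal{P}$-module that is finitely generated over $\mathcal{M}_H$. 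By Popa's bimodule criterion for intertwining — i.e.\ the argument of \cite[Theorem 3.1]{Po03}, which is exactly how one verifies condition $(1)$ of Theorem \ref{corner} — this forces $\mathcal{P}\prec_{\mathcal{M}_H}\mathcal{N}$, the desired contradiction, and hence $x\in\mathcal{M}_H=\mathcal{N}\rtimes H$.

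I expect the bimodule identification in the last paragraph to be the only genuine obstacle: it is the single place where almost malnormality of $H$ is used, and it amounts to the statement that the ``off-diagonal'' bimodule $L^2(\mathcal{M})\ominus L^2(\mathcal{M}_H)$ is, relative to $\mathcal{M}_H$, subordinate to the coarse bimodule $L^2(\mathcal{M}_H)\otimes_{\mathcal{N}}L^2(\mathcal{M}_H)$. Concretely, this is the observation that in any product $z^{*}u\eta$ with $z,\eta$ supported on $c$ and $u\in\mathcal{M}_H$, the ``middle'' group element of $u$ that survives $E_{\mathcal{M}_H}$ is confined to a translate of the finite set $K$; if one prefers, one can bypass the bimodule language entirely and run the argument concretely, choosing via Theorem \ref{corner} a sequence $u_n\in\mathscr{U}(\mathcal{P})$ with $\|E_{\mathcal{N}}(a u_n b)\|_2\to 0$ for all $a,b\in\mathcal{M}_H$, and then using this confinement together with a Cauchy–Schwarz estimate over the $\ell^2$-summable ``columns'' of $\eta$ to show $\langle e\,u_n\eta,\,u_n\eta\rangle\to 0$, which contradicts $\langle e\,u_n\eta,\,u_n\eta\rangle=\|\eta\|_2^2>0$. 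Everything else is bookkeeping, and the lemma follows along the lines of \cite[Theorem 3.1]{Po03}.
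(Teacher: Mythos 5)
The paper provides no proof of this lemma — it simply cites \cite[Theorem 3.1]{Po03} and leaves the details to the reader — so I am evaluating your argument on its own terms rather than against a written proof.

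Your strategy is the standard one and most of the ingredients are correct: the contrapositive, the double-coset decomposition $L^2(\mathcal{M})=\bigoplus_c\mathcal{H}_c$, the observation that $P_{\mathcal{H}_c}$ commutes with the left and right $\mathcal{M}_H$-actions (hence with $\mathcal{P}$ on both sides), the resulting relation $\mathcal{P}\eta\subseteq\sum_i\eta_i\mathcal{P}$ inside $\mathcal{H}_c$, and the identification of almost malnormality with finiteness of $K=gHg^{-1}\cap H$ controlling the $\mathcal{M}_H$-bimodule structure of $\mathcal{H}_c$. The bimodule isomorphism $\mathcal{H}_c\cong L^2(\mathcal{M}_H)\otimes_{\mathcal{B}}L^2(\mathcal{M}_H)$ with $\mathcal{B}=\mathcal{N}\rtimes K$, twisted by $\sigma_g$, is also correct.

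The gap is in your final deduction. You assert that producing a nonzero left $\mathcal{P}$-submodule of $L^2(\langle\mathcal{M}_H,e_{\mathcal{N}}\rangle)$ \emph{contained in a finitely generated right $\mathcal{M}_H$-module} already forces $\mathcal{P}\prec_{\mathcal{M}_H}\mathcal{N}$ ``by Popa's bimodule criterion.'' That is not what the criterion says and is false in general: the projection onto your right module $\mathcal{W}$ commutes only with the right $\mathcal{M}_H$-action, not the right $\mathcal{N}$-action, so it need not lie in $\langle\mathcal{M}_H,e_{\mathcal{N}}\rangle$ at all, and even if it did, finite generation over $\mathcal{M}_H$ is far weaker than finite generation over $\mathcal{N}$ (which is what yields a finite-trace projection in $\mathcal{P}'\cap\langle\mathcal{M}_H,e_\mathcal{N}\rangle$). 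As stated, the principle would ``prove'' $\mathcal{P}\prec_{L(H)}\mathbb{C}$ for any diffuse $\mathcal{P}\subseteq L(H)$ by taking a cyclic right $L(H)$-vector in $L^2(L(H))\otimes L^2(L(H))$.

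What actually makes the argument close is the additional geometric input you mention in passing but do not carry through to the conclusion: write $\mathcal{H}_c=\bigoplus_{j\in H/K}\mathcal{V}_j$ by right $H$-cosets in $HgH$, so that each $\mathcal{V}_j\cong L^2(\mathcal{M}_H)$ as a right module while the left $\mathcal{M}_H$-action \emph{permutes} the blocks via the left $H$-action on $H/K$, with finite stabilizers of size $|K|$. After cutting $\eta$ and the $\eta_i$ to finitely many blocks (up to an $\varepsilon$-error in $\|\cdot\|_2$), the set of $h\in H$ for which $u_h$ can map one of the finitely many blocks supporting $\eta$ into one of the finitely many blocks supporting $\mathcal{W}$ is finite (a union of boundedly many $K$-cosets). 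Therefore, for a sequence $u_n\in\mathscr{U}(\mathcal{P})$ with $\|E_{\mathcal{N}}(a u_n b)\|_2\to 0$ for all $a,b\in\mathcal{M}_H$ (guaranteed by Theorem \ref{corner}(2) from $\mathcal{P}\nprec_{\mathcal{M}_H}\mathcal{N}$), one gets $\|e_{\mathcal{W}}(u_n\eta)\|_2\to 0$ via Cauchy--Schwarz over these finitely many Fourier coefficients, contradicting $\|e_{\mathcal{W}}(u_n\eta)\|_2=\|\eta\|_2>0$. Your concrete ``bypass'' paragraph gestures at precisely this, and it is the correct way to finish; but it is a gesture, not a proof, and the bimodule route you offer as primary does not stand on its own without this finite-stabilizer block analysis.
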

The following result is a mild generalization of \cite[Lemma 2.3]{BV12}. For reader's convenience we include all the details in our proof.
\begin{theorem}\label{joinintertwining}
	Let $G$ be a group together $H\lhd G$ with a normal subgroup and assume that $G\ca(\enn,\tau )$ is a trace preserving action on a von Neumann algebra $(\enn,\tau)$. Consider $\enn\rtimes G=\emm$ the corresponding crossed product von Neumann algebra, assume that $\Aa\subset \emm $ (possibly non-unital) and $\mg\subseteq \enn_{1_\Aa\emm 1_\Aa}(\Aa)$ a group of unitaries such that $\Aa,\mg"\prec^{s}_{\emm} \enn\rtimes H$. Then $(\Aa\mg)"\prec^{s}_{\emm} \enn \rtimes H$. 
\end{theorem}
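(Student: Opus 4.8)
The plan is to reduce the statement about the joined algebra $(\Aa\mg)''$ to the hypotheses on $\Aa$ and $\mg''$ separately, using the normal subgroup structure. Set $\mathcal B := (\Aa \mg)''$. Since $\mg$ normalizes $\Aa$ inside $1_\Aa \emm 1_\Aa$, we have $1_{\mathcal B} = 1_\Aa$ and $\Aa \subseteq \mathcal B$ with $\mg \subseteq \mathscr N_{1_\Aa \emm 1_\Aa}(\Aa) \cap \mathscr U(\mathcal B)$. The first step is to recall the standard ``spectral gap''/relative commutant trick from \cite[Lemma 2.3]{BV12}: because $H \lhd G$, the subalgebra $\enn \rtimes H$ is normalized by the unitaries $u_g$, $g \in G$, so intertwining into $\enn \rtimes H$ behaves well under the normalizer. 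Concretely, I would first show that $\mg'' \prec^s_\emm \enn \rtimes H$ together with the fact that $\mg$ normalizes $\Aa$ forces control on how $\mg$ sits relative to $\enn \rtimes H$: by Theorem~\ref{joinintertwining}-type reasoning (or directly), one gets that for a suitable projection the unitaries in $\mg$ can be ``pushed'' into $\enn \rtimes H$ after a unitary conjugation, at least on each piece of a maximal family of projections witnessing the $\prec^s$.

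Next I would handle the ``$s$'' (stable) part carefully. It suffices, by definition of $\prec^s$, to show that for every nonzero projection $p' \in \mathscr Z(\mathcal B' \cap 1_\Aa \emm 1_\Aa)$ we have $\mathcal B p' \prec_\emm \enn \rtimes H$. Suppose not: then there is such a $p'$ with $\mathcal B p' \nprec_\emm \enn \rtimes H$. The key point is that $\mathcal B' \cap 1_\Aa \emm 1_\Aa \subseteq \Aa' \cap 1_\Aa \emm 1_\Aa$, so $p'$ commutes with $\Aa$; combined with $\Aa \prec^s_\emm \enn \rtimes H$ we get $\Aa p' \prec_\emm \enn \rtimes H$. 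Now apply Popa's intertwining criterion (Theorem~\ref{corner}) to $\Aa p' \prec_\emm \enn \rtimes H$: there are finitely many $x_i, y_i \in \emm$ and $C > 0$ with $\sum_i \|E_{\enn \rtimes H}(x_i a y_i)\|_2^2 \geq C$ for all $a \in \mathscr U(\Aa p')$. The idea is then to propagate this estimate from $\Aa p'$ to $\mathcal B p'$ using that $\mathcal B p'$ is generated by $\Aa p'$ and unitaries $u \in \mg$ (times $p'$) that normalize $\Aa$, together with the fact that $\mg'' p' \prec_\emm \enn \rtimes H$ as well (since $p'$ commutes with $\mathcal B \supseteq \mg''$, the stable intertwining of $\mg''$ survives corners).

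The technical heart of the argument — and the step I expect to be the main obstacle — is combining these two intertwinings into a single intertwining of the product algebra. For this I would invoke Lemma~\ref{malnormalcontrol}: the normal subgroup $H \lhd G$ need not be almost malnormal in general, but after passing to the intertwining data one typically arrives at a situation where the relevant conjugated subalgebra lands in $\enn \rtimes H$ and one must show that the extra generators (the images of $\mg$) also land there; this is exactly the kind of ``quasinormalizer stays inside'' conclusion that malnormality-type lemmas deliver. More precisely, the plan is: by $\mg'' p' \prec_\emm \enn \rtimes H$ choose a partial isometry $v$ and a $\ast$-homomorphism $\theta$ intertwining a corner of $\mg'' p'$ into $\enn \rtimes H$; then the corner of $\Aa p'$, being normalized by $\mg'' p'$ (up to the corner projections), gets carried by $\mathrm{Ad}(v)$ into the quasinormalizer of $\theta(\text{corner of } \mg'')$ inside $\enn \rtimes G$; if that image algebra is non-small in $\enn \rtimes H$, Lemma~\ref{malnormalcontrol} (or the Popa mixing/malnormality argument behind it) forces $v \Aa p' v^* \subseteq \enn \rtimes H$ as well, hence $v (\Aa \mg)'' p' v^* \subseteq \enn \rtimes H$, which yields $\mathcal B p' \prec_\emm \enn \rtimes H$ — the desired contradiction. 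Running this argument for every $p'$ in the center of the relative commutant gives $(\Aa \mg)'' \prec^s_\emm \enn \rtimes H$. The delicate bookkeeping is in matching up the various corner projections from the two separate $\prec^s$ hypotheses so that a common corner supports both intertwiners simultaneously; this is where following \cite[Lemma 2.3]{BV12} closely, but tracking non-unital subalgebras, is essential.
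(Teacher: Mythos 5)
Your proposed route is not the one the paper takes, and more importantly, the key tool you want to invoke does not apply here. You suggest running the intertwining data from $\mg'' \prec^s_\emm \enn\rtimes H$ through Lemma~\ref{malnormalcontrol}, but that lemma requires $H$ to be \emph{almost malnormal} in $G$, whereas the hypothesis of the theorem is that $H$ is \emph{normal} in $G$. For a nontrivial proper subgroup these conditions are essentially opposite (a normal subgroup has $gHg^{-1}\cap H = H$ for every $g$), so there is no way to reach a situation where the malnormality lemma applies. You flag this yourself as the likely ``main obstacle,'' and indeed it is: removing that step leaves no mechanism for combining the two separate intertwinings of $\Aa$ and $\mg''$ into one for $(\Aa\mg)''$, because having an intertwiner for each algebra individually does not by itself produce an intertwiner for the algebra they generate.

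The paper's proof is completely different and does not pass through Popa's criterion~(1), partial isometries, or quasinormalizer arguments at all. It instead uses the approximation characterization of $\prec^s$ from \cite[Lemma 2.5]{Va10}: writing $\pee = \enn\rtimes H$ and fixing a section $G_H$ of $G/H$, the hypothesis $\Aa,\mg''\prec^s_\emm\pee$ gives, for every $\varepsilon>0$, finite sets $K_\varepsilon,L_\varepsilon\subset G_H$ with $\|a-P_{\pee K_\varepsilon}(a)\|_2\leq\varepsilon$ and $\|b-P_{\pee L_\varepsilon}(b)\|_2\leq\varepsilon$ uniformly over unit balls. One then estimates $\|ab - P_{\pee K_\varepsilon}(a)P_{\pee L_\varepsilon}(b)\|_2$ via the triangle inequality and the operator norm bound $\|P_{\pee K}\|_\infty\leq |K|$. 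The decisive structural input that you are missing is precisely where normality of $H$ enters: since $u_g(\enn\rtimes H)u_g^{-1}=\enn\rtimes H$ for every $g\in G$, one has $\pee u_k\cdot\pee u_l=\pee u_{kl}$, so the product $P_{\pee K_\varepsilon}(a)P_{\pee L_\varepsilon}(b)$ is again supported on a finite subset $F_\varepsilon\subset G_H$ with $|F_\varepsilon|\leq|K_\varepsilon||L_\varepsilon|$. This yields $\|ab-P_{\pee F_\varepsilon}(ab)\|_2\leq 2\varepsilon$ uniformly, hence $(\Aa\mg)''\prec_\emm\pee$, and repeating with an extra projection $p\in(\Aa\mg)'\cap 1_{\Aa\vee\mg''}\emm 1_{\Aa\vee\mg''}$ approximated the same way upgrades this to $\prec^s$. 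In short, the relevant feature of normality here is not malnormality-type control of quasinormalizers but the multiplicativity of coset supports over $G/H$, and a direct $\varepsilon$-approximation using Vaes's finite-support lemma is both the right tool and much simpler than the intertwiner-matching you sketch.
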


\begin{proof} Let $G_H\subset G$ be a section for $G/H$. Also denote by $\pee= \enn \rtimes H$. Since $\Aa, \mathcal G''\prec^{s}_\emm \pee$, then by \cite[Lemma 2.5]{Va10}, for all $ \varepsilon_1,\varepsilon_2 >0$\ there exist $K_{\varepsilon_1},L_{\varepsilon_2}\subset G_H$ such that for all $a\in (\Aa)_1$ and $b\in (\mathcal G'')_1$ we have 1) $\|P_{\pee K_{\varepsilon_1}}(a)-a\|_2\leq \varepsilon_1$ and 2) $\|P_{\pee L_{\varepsilon_2}}(b)-b\|_2\leq \varepsilon_2$. Here for every $ S\subset G_H$, the map $P_{\pee S}: L^2(\emm)\rightarrow \overline{\rm span}^{\|\cdot\|_2}\{\pee u_g\,:\, g\in S \}$ is the orthogonal projection. Also notice that, for all $x\in \emm$, $P_{\pee S}(x)=\underset{s\in S}{\sum}E_{\pee}(xu_{s^{-1}})u_s$. In particular, for all $x\in \emm$ we have,
	\begin{align}
	\|P_{\pee S}(x)\|_{\infty}\leq |S|\|x\|_{\infty} \ \text{ and }\ \|P_{\pee S}(x)\|_2\leq \|x\|_2.
	\end{align}	
	Now for all $a\in (\Aa)_1,b\in (\mathcal G'')_1$ we have 
	\begin{align}
	\|ab-P_{\pee K_{\varepsilon_1}}(a)P_{\pee L_{\varepsilon_2}}(b)\|_2 & \leq \|ab-P_{\pee K_{\varepsilon_1}}(a)b\|_2+\|P_{\pee K_{\varepsilon_1}}(a)b-P_{\pee K_{\varepsilon_1}}(a)P_{\pee L_{\varepsilon_2}}(b)\|_2\nonumber \\
	& \leq \|a-P_{\pee K_{\varepsilon_1}}(a)\|_2\|b\|_{\infty}+\|P_{\pee K_{\varepsilon_1}}\|_{\infty}\|b-P_{\pee L_{\varepsilon_2}}(b)\|_2\\
	& \leq \|a-P_{\pee K_{\varepsilon_1}}(a)\|_2+|K_{\varepsilon_1}|\|b-P_{\pee L_{\varepsilon_2}}(b)\|_2\nonumber\\
	&\leq \varepsilon_1+|K_{\varepsilon_1}|\varepsilon_2. 
	\end{align}
	So letting $\varepsilon_1=\varepsilon$ and $\varepsilon_2=\frac{\varepsilon}{|K_{\varepsilon_1}|}$ we get that there exists $K_{\varepsilon},L_{\varepsilon}$ finite subsets of the section  $G/H$ such that
	\begin{align}\label{epsilon} 
	\|ab-P_{\pee K_{\varepsilon}}(a)P_{\pee L_{\varepsilon}}(b)\|\leq 2\varepsilon.
	\end{align} 
	Since $H\lhd G$, then there exist a finite set $F_{\varepsilon}\subseteq G_H$ such that $|F_{\varepsilon}|\leq|K_{\varepsilon}||L_{\varepsilon}|$ and $P_{\pee F_{\varepsilon}}(P_{\pee K_{\varepsilon}}(a)P_{\pee L_{\varepsilon}}(b))=P_{\pee K_{\varepsilon}}(a)P_{\pee L_{\varepsilon}}(b)$ for all $a\in \mathscr{U}(\Aa),b\in(\mathcal G'')_1$. Using this fact together with (\ref{epsilon}) we get that $\|P_{\pee F_{\varepsilon}}(ab)-P_{\pee K_{\varepsilon}}(a)P_{\pee L_{\varepsilon}}(b)\|\leq 2\varepsilon$ and combining with (\ref{epsilon}) again we get that 
	\begin{align}\label{6} 
	\|ab-P_{\pee F_{\varepsilon}}(ab)\|\leq 2\varepsilon.
	\end{align}
	for all $a\in \mathscr{U}(\Aa),b\in(\mathcal G'')_1$. Since $(\mathscr{U}(\Aa)\mg)"=(\Aa \mg)"$, this already shows that $(\Aa \mg)"\prec \pee$. Next we argue that we actually have $(\Aa \mg)"\prec^s \pee$. To see this fix $p\in (\Aa \mg)'\cap 1_{\Aa \vee \mg''}\emm 1_{\Aa \vee \mg''}$. Then there exists a finite set $G_{\varepsilon}\subseteq G_H$ such that 
	\begin{align}\label{5}
	\|p-P_{\pee G_{\varepsilon}}(p)\|\leq \frac{\varepsilon}{|K_{\varepsilon}||L_{\varepsilon|}}.
	\end{align} 	 
	Combining (\ref{5}) and (\ref{6}) we get that 
	\begin{align}\label{7}
	\|abp-P_{\pee F_{\varepsilon}}(ab)P_{\pee G_{\varepsilon}}(p)\|& \leq \|abp-P_{\pee F_{\varepsilon}}(ab)p\|+\|P_{\pee F_{\varepsilon}}(ab)p-P_{\pee F_{\varepsilon}}(ab)P_{\pee G_{\varepsilon}}(p)\|\nonumber \\
	&\leq \|ab-P_{\pee F_{\varepsilon}}(ab)\|_2\|p\|_{\infty} +\|P_{\pee F_{\varepsilon}}(ab)\|_{\infty}\|p-P_{\pee G_{\varepsilon}}(p)\|_2\nonumber\\
	& \leq 4\varepsilon +|F_{\varepsilon}|\cdot \frac{\varepsilon}{|K_{\varepsilon}||L_{\varepsilon}|}<5\varepsilon. 
	\end{align}
	Again there exists a finite set $T_{\varepsilon}\subset G$ such that $P_{\pee T_{\varepsilon}}(P_{\pee F_{\varepsilon}}(ab)P_{\pee G_{\varepsilon}}(p))=P_{\pee F_{\varepsilon}}(ab)P_{\pee G_{\varepsilon}}(p)$ and $|T_{\varepsilon}|\leq |F_{\varepsilon}||G_{\varepsilon}|$. Using this and (\ref{7}) we get that $\|abp-P_{\pee T_{\varepsilon}}(abp)\|<10\varepsilon$ for all $a\in\mathscr{U}(\Aa),b\in \mg$. This shows that $(\Aa \mg)"\prec^s_{\emm} \pee$, as desired.
\end{proof}

We end this section by highlighting a straightforward corollary of Theorem \ref{joinintertwining} that we will be very useful in the sequel.  
\begin{cor} \label{cor:join}
	Let $H\lhd G$ be a normal subgroup of $G$ and $G\ca(\enn,\tau )$ be a trace preserving action on a tracial von Neumann algebra $(\enn,\tau)$. Let $\emm = \enn \rtimes G$. Assume that $\Aa, \ \bee \subseteq \emm$ are commuting $\ast$-subalgebras such that $\Aa \prec_{\emm}^s \enn \rtimes H$ and $\bee \prec_{\emm}^s \enn \rtimes H$. Then $\Aa \vee \bee \prec^s_{\emm} \enn \rtimes H$.  
\end{cor}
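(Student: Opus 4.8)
The plan is to derive this as an immediate application of Theorem~\ref{joinintertwining}, using for $\mg$ the unitary group of $\bee$ (after passing to weak closures, which affects none of the intertwining relations, we may assume $\Aa,\bee$ are von Neumann subalgebras). The crucial observation is that commuting subalgebras normalize one another: since $\bee$ commutes with $\Aa$, every $u\in\mathscr U(\bee)$ satisfies $u\Aa u^\ast=\Aa$ and commutes with $1_\Aa$, so $\mathscr U(\bee)$ lies in the normalizer of $\Aa$; moreover $\mathscr U(\bee)''=\bee$, which satisfies $\bee\prec^s_\emm\enn\rtimes H$ by hypothesis. Hence, once $\mathscr U(\bee)$ is arranged to sit inside $1_\Aa\emm 1_\Aa$, Theorem~\ref{joinintertwining} applies with $\mg=\mathscr U(\bee)$ and outputs $\Aa\vee\bee=(\Aa\cup\mathscr U(\bee))''\prec^s_\emm\enn\rtimes H$, which is exactly the conclusion.

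The only point requiring care is that $\Aa$ and $\bee$ need not have the same unit. I would take care of this by the usual enlargement: replace $\Aa$ and $\bee$ by $\widetilde\Aa:=\Aa\vee\mathbb C1_\bee$ and $\widetilde\bee:=\bee\vee\mathbb C1_\Aa$. These are again commuting subalgebras, they now share the unit $q:=1_\Aa\vee 1_\bee$, and $\widetilde\Aa\vee\widetilde\bee=\Aa\vee\bee$. One then checks $\widetilde\Aa\prec^s_\emm\enn\rtimes H$ (and symmetrically for $\widetilde\bee$): the projections $1_\Aa,1_\bee$ are central in $\widetilde\Aa$, so $\widetilde\Aa$ decomposes as a finite direct sum each of whose summands is either a corner $\Aa r$ for a projection $r\le 1_\Aa$ in $\Aa'\cap\emm$ — which embeds into $\enn\rtimes H$ precisely because $\Aa\prec^s_\emm\enn\rtimes H$ — or a copy of $\mathbb C e$, for which $\mathbb C e\prec_\emm\enn\rtimes H$ is automatic since $1_{\enn\rtimes H}=1_\emm\ge e$ (the intertwiner $v=e$ works). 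Running this decomposition through the definition of $\prec^s$ gives the claim.

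With the reduction in hand, $\mg:=\mathscr U(\widetilde\bee)\subseteq q\emm q=1_{\widetilde\Aa}\emm 1_{\widetilde\Aa}$ normalizes $\widetilde\Aa$ and satisfies $\mg''=\widetilde\bee\prec^s_\emm\enn\rtimes H$, so Theorem~\ref{joinintertwining}, applied with $\widetilde\Aa$ in place of $\Aa$, yields $(\widetilde\Aa\mg)''\prec^s_\emm\enn\rtimes H$, that is $\Aa\vee\bee=\widetilde\Aa\vee\widetilde\bee\prec^s_\emm\enn\rtimes H$. I do not expect a genuine obstacle here: all the real work sits in Theorem~\ref{joinintertwining}, and the only non-formal ingredient is the unit bookkeeping in the second paragraph.
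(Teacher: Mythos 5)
Your proof is correct and follows essentially the same route as the paper: the paper's one-line argument is precisely ``apply Theorem~\ref{joinintertwining} with $\mathcal G = \mathscr U(\mathcal B)$.'' The additional unit bookkeeping you carry out (enlarging $\mathcal A, \mathcal B$ to $\widetilde{\mathcal A}, \widetilde{\mathcal B}$ so they share the unit $1_{\mathcal A}\vee 1_{\mathcal B}$, and verifying $\widetilde{\mathcal A}\prec^s \mathcal N\rtimes H$ by decomposing along the central projections $1_{\mathcal A}, 1_{\mathcal B}$) is a sound refinement of a point the paper leaves implicit, but it is not a different approach.
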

\begin{proof} Follows from Theorem \ref{joinintertwining} by letting $\mathcal G= \mathscr U(\bee)$.\end{proof}

\subsection{Height of Elements in Group von Neumann Algebras}
The notion of height of elements in crossed products and group von Neumann algebras was introduced and developed in \cite{Io11} and \cite{IPV10} and was highly instrumental in many of the recent classification results in von Neumann algebras \cite{Io11,IPV10,KV15,CU18}. Following \cite[Section 3]{IPV10} for every $x\in \El(G)$ we denote by $h_{G}(x)$ the largest Fourier coefficient of $x$, i.e.,
$h_{G}(x)={\max}_{g\in G }\ |\tau(xu_g^*)|$.  Moreover,  for every subset $\mg\subseteq \El(\G)$, we denote by
$h_{G}(\mg)=\inf_{x\in \mg}h_{G}(x)$, the height of $\mg$ with respect to $G$.   Using the notion of height Ioana, Popa and Vaes proved in their seminal work, \cite[Theorem 3.1]{IPV10} that whenever $G$, $H$ are icc groups such that $\El(G) = \El(H)$ and $h_{G}(H)>0$, then $G$ and $H$ are isomorphic. The following generalization of this result to embeddings was obtained by Krogager and Vaes \cite{KV15} and will be used in an essential way to derive our main Theorem \ref{mainthm} in the last section.
\begin{theorem}[Theorem 4.1, \cite{KV15}] \label{kv15}
	Let $G$ be a countable group and denote by $\emm = \mathcal L(G)$. Let  $p\in\mathscr{P}(\emm)$ be a projection and assume that $\mg\subseteq \mathscr{U}(p\emm p)$ is a subgroup satisfying following properties:
	\begin{enumerate}
		\item The  unitary representation $\{Ad\ v\}_{v\in\mg}$ on $L^2(p(\emm p\ominus \mathbb{C}p)$ is weakly mixing; 
		\item For any $e\neq g\in\El(G)$ we have  $\mg"\nprec_\emm \El(C_{G}(g))$;
		\item We have $h_{G}(\mg)>0$. 
	\end{enumerate}
	Then $p=1$ and there exists a unitary $u\in \El(G)$ such that $u\mg u^*\subseteq \mathbb{T}G$.
\end{theorem}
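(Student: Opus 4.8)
The plan is to follow the comultiplication technique of Ioana--Popa--Vaes \cite{IPV10}, in the form refined by Krogager--Vaes \cite{KV15}. Write $\emm = \El(G)$ and introduce the \emph{comultiplication} $\Delta \colon \emm \to \emm \bten \emm$, the unique normal unital trace-preserving $\ast$-homomorphism determined by $\Delta(u_g) = u_g \otimes u_g$ for $g \in G$, and identify $\emm \bten \emm$ with $\El(G \times G)$. Since $\Delta$ is a trace-preserving embedding, $\Delta(\mg)$ is a group of unitaries inside $\Delta(p)(\emm \bten \emm)\Delta(p)$ with $\Delta(\mg)'' = \Delta(\mg'')$. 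Two features make this useful: every $\Delta(v)$ is supported, in the $G \times G$ Fourier basis, on the diagonal $\{(g,g) : g \in G\}$; and hypothesis $(3)$ provides $\delta > 0$ with $\max_{g \in G} |\tau(\Delta(v)(u_g \otimes u_g)^\ast)| \geq \delta$ for all $v \in \mg$ (this number is exactly $h_G(v)$).

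First I would pin down the position of $\Delta(\mg'')$ inside $\El(G \times G)$. Hypothesis $(1)$ should be promoted to a relative weak mixing statement for $\{\Ad\, \Delta(v)\}_{v \in \mg}$ over the coordinate subalgebras, which in particular forces $\Delta(\mg)' \cap \Delta(p)(\emm \bten \emm)\Delta(p) = \mathbb{C}\Delta(p)$ and prevents $\Delta(\mg'')$ from shedding finite-dimensional corners under intertwining. Next, using Popa's intertwining theorem (Theorem~\ref{corner}) together with the description of the quasi-normalizers, inside $\El(G \times G)$, of the coordinate subalgebras $\emm \bten 1$ and $1 \bten \emm$ and of the centralizer-type subalgebras $\El(\{(a,b) : ab^{-1} \in C_G(g)\})$ with $g \neq e$, hypothesis $(2)$ should exclude any intertwining of $\Delta(\mg'')$ into a subalgebra manufactured from a proper centralizer. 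Inserting this into the structural analysis of comultiplications of \cite{IPV10,KV15} should then give the key conclusion: there is $\Omega \in \mathscr{U}(\emm \bten \emm)$ with $\Omega\, \Delta(\mg)\, \Omega^\ast \subseteq \mathbb{T}(G \times G)$, and, using the diagonal support of the $\Delta(v)$, one can arrange that the group element is the same on both legs, so that for every $v \in \mg$ there are $g(v) \in G$ and $\omega(v) \in \mathbb{T}$ with $\Omega\, \Delta(v)\, \Omega^\ast = \omega(v)(u_{g(v)} \otimes u_{g(v)}) = \Delta(\omega(v) u_{g(v)})$. Hypothesis $(3)$ is precisely what upgrades the a priori approximate version of this statement to the exact identity, via the standard maximality argument for the conjugating unitary.

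Next I would untwist $\Omega$ using coassociativity. Applying $\Delta \otimes \id$ and $\id \otimes \Delta$ to the identity of the previous step and invoking $(\Delta \otimes \id)\Delta = (\id \otimes \Delta)\Delta$, the unitary $(\id \otimes \Delta)(\Omega)^\ast (\Delta \otimes \id)(\Omega)$ commutes with $(\Delta \otimes \id)\Delta(\mg'')$, whose relative commutant in $\emm^{\bten 3}$ is trivial by weak mixing; hence $\Omega$ satisfies a $2$-cocycle identity for $\Delta$, and the vanishing of the relevant cohomology (a lemma in \cite{IPV10}) means $\Omega$ is a coboundary, producing $u \in \mathscr{U}(\emm)$ with $\Delta(uvu^\ast) = \Delta(\omega(v) u_{g(v)})$ for all $v \in \mg$. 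Injectivity of $\Delta$ then gives $uvu^\ast = \omega(v) u_{g(v)} \in \mathbb{T}G$, i.e.\ $u\mg u^\ast \subseteq \mathbb{T}G$. Finally $p = 1$: for any $v \in \mg$ the element $uvu^\ast$ is a unitary of $\emm$ while it also lies in the corner $(upu^\ast)\emm(upu^\ast)$, so $1 = (uvu^\ast)(uvu^\ast)^\ast = upu^\ast$.

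The hard part will be the first step: controlling the position of $\Delta(\mg'')$ inside $\El(G \times G)$ --- ruling out, via hypotheses $(1)$ and $(2)$, that it intertwines into subalgebras built from centralizers, and then conjugating it onto the group unitaries using hypothesis $(3)$. This is where the full deformation/rigidity analysis of comultiplications enters, together with the careful bookkeeping of Fourier supports on $G \times G$, and where all three hypotheses get used simultaneously; by comparison, the coassociativity untwisting and the deduction of $p = 1$ are essentially formal.
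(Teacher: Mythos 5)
This statement is not proved in the paper at all: it is quoted verbatim from Krogager--Vaes [KV15, Theorem~4.1] (itself a refinement of [IPV10, Theorem~3.1]), and the paper only cites it. So your proposal has to be judged as a reconstruction of the [KV15] proof, and against that benchmark it has a serious gap: the heart of the argument is missing, and the step where you try to outsource it is close to circular. You route everything through the comultiplication $\Delta\colon\El(G)\to\El(G)\,\bten\,\El(G)$ and declare that ``the structural analysis of comultiplications of [IPV10, KV15]'' will produce a unitary $\Omega$ with $\Omega\,\Delta(\mathcal G)\,\Omega^*\subseteq\mathbb T(G\times G)$. But passing to $\Delta(\mathcal G)\subset\El(G\times G)$ preserves the height lower bound essentially verbatim (as you observe), so what you are asking that ``structural analysis'' to deliver is exactly the theorem you set out to prove, applied to $G\times G$ instead of $G$. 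In [IPV10] and [KV15] the comultiplication machinery is the tool that \emph{reduces} a $W^*$-superrigidity problem to the height condition; its terminal step is precisely the height theorem. Nothing in your outline supplies the mechanism that turns $h_G(\mathcal G)\geq\delta>0$ into a single group element per $v\in\mathcal G$. You also tacitly assert that $\{\Ad\Delta(v)\}_{v\in\mathcal G}$ is (relatively) weakly mixing on the whole of $\emm\,\bten\,\emm$ over the legs, but $\Ad\Delta(v)$ does not factor through the legs, so this needs an argument; only its restriction to $\Delta(\emm)$ is obviously conjugate to $\Ad v$.

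What is actually missing is the direct averaging/orthogonality argument from [KV15, Section~4] (resp.\ [IPV10, Section~3]): fix $\delta=h_G(\mathcal G)>0$, work in the basic construction $B(L^2(\emm))=\langle\emm,e_{\mathbb C}\rangle$ with its semifinite trace, and average the rank-one projections $v\,e_{\mathbb C}\,v^*$ (equivalently, the positive contractions encoding the Fourier-mass distributions of the $v\in\mathcal G$). The height bound guarantees every weak limit point of these averages has trace at least $\delta^2>0$; weak mixing (hypothesis (1)) forces such a limit to be concentrated on a single coherent orbit and produces, after a maximality argument, the partial isometry and then the unitary $u$; and hypothesis (2), the non-embedding of $\mathcal G''$ into any $\El(C_G(g))$ with $g\neq e$, is exactly what kills the degenerate possibility that the limit spreads along a centralizer coset, which is the obstruction to the assignment $v\mapsto g(v)$ being a genuine, $u$-implemented identification with a subgroup of $\mathbb T G$. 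This is where all three hypotheses interact and where the substance of the theorem lies; your proposal defers it entirely to ``a standard maximality argument.'' By contrast, the coassociativity/$2$-cocycle untwisting you sketch is correct and standard, but once one proves $u\mathcal G u^*\subseteq\mathbb T G$ directly there is nothing left to untwist, and, as you correctly note, $p=1$ is immediate.
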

Next we highlight a new situation  when it's possible to control  lower bound for height of unitary elements in the context of crossed product von Neumann algebras arising from group actions by automorphisms with no non-trivial stabilizers. Our result and its proof is reminiscent of the prior powerful techniques  for Bernoulli actions introduced in   \cite[Theorem 5.1]{IPV10} (see also \cite[Theorem 6.1]{Io11}) and their recent counterparts for the Rips constructions \cite[Theorem 5.1]{CDK19}. The precise statement is the following
\begin{theorem} \label{height}
	Let $G$ and $H$ be countable groups and let $\sigma: G\rar Aut(H)$ be an action by automorphisms for which   there exists a scalar $c>0$ satisfying $|{\rm Stab}_G(h)|<c$ for all $h\in H\setminus \{e\}$.   Consider $\emm = \El(H\rtimes_\sigma G)$ and let $\Aa \subseteq \emm$ be a diffuse von Neumann subalgebra such that $\Aa \prec_{\emm}^s \El(H)$. For any group of unitaries  $\mg \subseteq \El(G)$ satisfying $\mg \subseteq \mathcal N_{\emm}(\Aa)$ we have that $h_{G}(\mg)>0$.
\end{theorem}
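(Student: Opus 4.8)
The plan is to exploit the hypothesis $\mathcal{A}\prec^{s}_{\emm}\El(H)$ together with the normalization $\mg\subseteq\mathcal{N}_{\emm}(\mathcal{A})$ to transfer control from $\mathcal{A}$ to the group $\mg$, and then to run a combinatorial counting argument on Fourier supports driven by the uniform bound $|\mathrm{Stab}_G(h)|<c$. First I would fix a nonzero projection and, using $\mathcal{A}\prec^{s}_{\emm}\El(H)$ via \cite[Lemma 2.5]{Va10} (as in the proof of Theorem \ref{joinintertwining}), produce for every $\varepsilon>0$ a finite set $F_\varepsilon\subseteq G$ such that $\|P_{\El(H)F_\varepsilon}(a)-a\|_2\le\varepsilon$ for all $a$ in the unit ball of $\mathcal{A}$. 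Since $\mathcal{A}$ is diffuse I can pick a Haar-like sequence or at least an element whose Fourier support (in the $H$-directions) is suitably spread out; concretely I want a single unitary $a\in\mathcal{A}$ whose expansion $a=\sum_{h}a_h u_h$ with $a_h\in\El(H)$... wait, more precisely writing $a\in\El(H\rtimes G)$ with Fourier decomposition over cosets of $H$, the intertwining says $a$ is essentially supported on $H\cdot F_\varepsilon$, i.e.\ on finitely many cosets, and we may after cutting assume $a\in\El(H)$ up to $\varepsilon$.

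Next, for $v\in\mg\subseteq\El(G)$, write $v=\sum_{g\in G}v_g u_g$ with $v_g\in\mathbb{C}$, and suppose toward a contradiction that $h_G(\mg)=0$, so there is a sequence $v^{(n)}\in\mg$ with $\max_g|v^{(n)}_g|\to 0$. The key point is that $v^{(n)}a (v^{(n)})^{*}\in\mathcal{A}$ (up to the $\varepsilon$-error, since $\mg$ normalizes $\mathcal{A}$), and I would compute the $\El(H)$-Fourier coefficients of this element. For $h\in H\setminus\{e\}$, the coefficient of $u_h$ in $v^{(n)}u_{h_0}(v^{(n)})^*$ (for a fixed $h_0$ in the support of $a$) is a sum of the form $\sum_{g:\ \sigma_g(h_0)=h}v^{(n)}_g\overline{v^{(n)}_g}=\sum_{g\in g_0\mathrm{Stab}_G(h_0)}|v^{(n)}_g|^2$ over a single coset of $\mathrm{Stab}_G(h_0)$, whose size is $<c$. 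Hence each such coefficient is at most $c\cdot(\max_g|v^{(n)}_g|)^2\to 0$, while on the other hand $\|v^{(n)}a(v^{(n)})^*\|_2$ stays bounded below (it is within $\varepsilon$ of a unitary in $\mathcal{A}$, of $\|\cdot\|_2$-norm $\ge\|a\|_2-\varepsilon$, and its mass lives on the $H$-directions by the intertwining). Summing $|\text{coeff}_h|^2\le c\max_g|v^{(n)}_g|^2\cdot|\text{coeff}_h|$... I need the $\ell^2$-mass on $\{u_h:h\in H\}$ to be bounded below but each entry to go to $0$, which is not yet a contradiction by itself — so the real input is that the \emph{number} of relevant $h$ is controlled: $\sum_h |\text{coeff}_h|^2 = \sum_h (\sum_{g\in \text{coset}}|v^{(n)}_g|^2)^2 \le \max_{g}|v_g^{(n)}|^2 \sum_h \sum_{g\in\text{coset}_h}|v_g^{(n)}|^2 \le \max_g|v_g^{(n)}|^2\cdot c\cdot\|v^{(n)}\|_2^2 = c\max_g|v_g^{(n)}|^2\to 0$, using that the cosets $\{g:\sigma_g(h_0)=h\}$ for distinct $h$ are disjoint and each has size $<c$. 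That contradicts the lower bound, so $h_G(\mg)>0$.

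The main obstacle I anticipate is the bookkeeping that makes the previous paragraph rigorous: one must handle the fact that $a\in\mathcal{A}$ is only $\varepsilon$-close to living in $\El(H)$ (not exactly there), and that $a$ is supported on several cosets of $H$ rather than one, so the Fourier coefficient of $u_h$ in $v a v^*$ is a finite sum over the $H$-directions present in $a$, each term being a sum over a coset of some $\mathrm{Stab}_G(h_0)$; getting the uniform bound $c$ to survive this requires choosing $\varepsilon$ after fixing $a$ and absorbing the finitely many coset-translates. A secondary point is to justify that $E_{\El(H)}(vav^*)$ carries nontrivial $\|\cdot\|_2$-mass uniformly in $v\in\mg$ — this again follows from $\mathcal{A}\prec^{s}_{\emm}\El(H)$ applied to the family $vav^*\in\mathcal{A}$, giving a uniform finite $F\subseteq G$ controlling all of them at once, which is exactly where the ``strong'' (superscript $s$) intertwining, rather than plain $\prec$, is used. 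Once these two estimates are in place, the counting argument with the disjoint cosets of size $<c$ closes the proof.
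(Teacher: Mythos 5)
Your proposal is correct and follows essentially the same route as the paper's proof: use $\Aa\prec^{s}_{\emm}\El(H)$ (via \cite[Lemma 2.5]{Va10}) to get a single finite set $S$ localizing every unitary in $\mathcal U(\Aa)$, use diffuseness of $\Aa$ to pick $b\in\mathcal U(\Aa)$ with negligible $\El(G)$-Fourier mass on $S$, conjugate by $v\in\mg$, and run the counting estimate on Fourier coefficients driven by the uniform bound $|{\rm Stab}_G(h)|<c$ (your ``key counting step'' is exactly the content of the paper's Claim \ref{heightb1}, specialized to $a=u_{h_0}$). The cross-term bookkeeping you flag as the remaining obstacle — $a$ spread over finitely many $G$-cosets, so coefficients of $v a v^{*}$ involve products $v_{g_1}\overline{v_{g_2}}$ rather than $|v_g|^2$ — is handled in the paper by the $K,L,S$ constants appearing in $\kappa_{\varepsilon,K,S,a}$ and a Cauchy--Schwarz bound, and your contradiction framing is only a cosmetic variant of the paper's direct lower bound $h_G(g)\ge\kappa^{-1}(1-3\varepsilon-\varepsilon^{1/2})$.
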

\begin{proof} For ease of exposition denote by $\mathcal N =\El(H)$. Next we prove the following property 
	
	\begin{claim}\label{heightb1}	For every $x,y \in \mathcal L(G)$, every finite subsets  $K,S\subset G$, every $a\in {\rm span}\mathcal N K$ with $E_{\mathcal L(G)}(a)=0$ and every $\varepsilon>0$ there exists a scalar $\kappa_{\varepsilon,K,S,a}>0$ such that 
		\begin{align}\label{heightinequality}
		\|P_{\enn S}(xay)\|^2_2\leq \kappa_{\varepsilon ,K,S,a}\|y\|^2_2\|a\|^2_2h_G^2(x)+\varepsilon\|x\|_{\infty}\|y\|_{\infty},
		\end{align} 
		where $P_{\enn S}$ denotes the orthogonal projection from $L^2(\emm)$ onto $\overline{{\rm span}}^{\|\cdot\|_2} (\mathcal N S)$. 	
		
	\end{claim}
	
	\noindent \emph{Proof of Claim \ref{heightb1}.}
	First fix a finite set $L\subseteq H\setminus \{e\}$ and let $b\in {\rm span} (LK)$. Observe that using the Fourier decomposition of $x=\sum_g x_g u_g$ and $y=\sum y_g u_g$, where $x_g= \tau(xu_{g^{-1}})$ and $y_g= \tau(yu_{g^{-1}})$, basic calculations show that 	\begin{align}\label{heightcal}
	&\|E_{\mathcal N}(xby)\|^2_2 
	= \|\underset{g\in G,k\in K}{\sum} x_g y_{k^{-1}g^{-1}}\sigma_g(E_{\mathcal N}(bu_{k^{-1}}))\|_2^2\nonumber \\
	&=\underset{g_1,g_2\in G,k_1,k_2\in K}{\sum} x_{g_1}y_{k^{-1}_1g_1^{-1}}\overline{x_{g_2}}\overline{y_{k^{-1}_2 g_2^{-1}}}\langle \sigma_{g_1}(E_{\mathcal N}(bu_{k_1^{-1}})),\sigma_{g_2}(E_{\mathcal N}(bu_{k_2^{-1}}))\rangle.
	\end{align}
	Furthermore, using the Fourier decomposition $b= \sum_h b_h u_h$ where $b_h =\tau(bu_{h^{-1}})$ we also see that 
	\begin{align}\label{stablizer}
	\langle \sigma_{g_1}(E_{\mathcal N}(bu_{k_1^{-1}})),\sigma_{g_2}(E_{\mathcal N}(bu_{k_2^{-1}}))\rangle & 
	=\underset{l_1,l_2\in L}{\sum} b_{k_1l_1}\overline{b_{k_2l_2}} \delta_{\sigma_{g_1}(l_1),\sigma_{g_2}(l_2) }=\underset{l_1,l_2\in L, g_2^{-1}g_1\in S_{l_1,l_2}}{\sum}b_{k_1l_1}\overline{b_{k_2l_2}},  
	\end{align}
	where for every $l_1,l_2\in L$ we have denoted by $S_{l_1,l_2}=\{ g\in G\,:\, \sigma_{g}(l_1)=l_2\}$.
	
	Thus, combining (\ref{heightcal}) and (\ref{stablizer}) and using basic inequalities together with $|S_{l_1,l_2}|\leq c$ we get that 
	\begin{align}
	\|E_{\mathcal N}(xby)\|^2_2&\leq  \underset{k_1,k_2\in K; l_1,l_2\in L, g_1,g_2\in G, g_2^{-1}g_1\in S_{l_1,l_2} }{\sum}  \left |x_{g_1}y_{k^{-1}_1g_1^{-1}}b_{k_1l_1}\overline{x_{g_2}y_{k^{-1}_2 g_2^{-1}}  b_{k_2l_2}}\right |  \nonumber\\
	&\leq  \underset{k_1,k_2\in K, l_1,l_2\in L,  s\in S_{l_1,l_2}, g\in G,}{\sum} \left |x_{gs} y_{k_1^{-1}s^{-1}g^{-1}}b _{k_1l_1}\overline{ x_g y_{k^{-1}_2 g}  b_{k_2 l_2}}\right |\nonumber\\
	& \leq (\max_{l_1,l_2\in L}|S_{l_1,l_2}|) |K|^2|L|^2 h^2_G(x) \|y\|_2^2\|b\|_2^2\leq c|K|^2|L|^2 h^2_G(x) \|y\|_2^2\|b\|_2^2.\label{heightb2}
	\end{align}
	Using these estimates we are now ready to derive the proof of \eqref{heightinequality}.  
	To this end fix $\varepsilon >0.$ Using basic approximations and $\|E_{\El(G)}(a)\|=0$ one can find a finite set $L\subset H\setminus\{e\}$ and $b\in {\rm span}(LK)$ such that 
	\begin{align}\label{normestimate}
	\|a-b\|_2\leq\min \{\frac{\varepsilon}{2}, \|a\|_2\} \text{ and } \|b\|_{\infty}\leq 2\|a\|_{\infty}.
	\end{align}
	Notice  that for all $z\in\emm$ we have  $P_{\enn S}(z)=\sum E_{\enn}(zu_{s^{-1}})u_s$ and using this formula together with estimate (\ref{normestimate}) and Cauchy-Schwarz inequality we get
	\begin{align}\label{estimate}
	\|P_{\enn S}(xay)\|^2_2 & \leq 2|S|\left(\underset{s\in S}{\sum}\|E_{\enn}(xbyu_{s^{-1}}\|_2^2\right )+\varepsilon |x\|_{\infty}\|y\|_{\infty}.\nonumber
	\end{align}
	Using (\ref{heightb2}) followed by \eqref{normestimate} we further have that the last inequality above is smaller than  
	\begin{align}
	&\leq 2c|S||K|^2|L|^2(\underset{s\in S}{\sum} h^2_G(x)\|yu_{s^{-1}}\|_2^2\|b\|_2^2)+\varepsilon |x\|_{\infty}\|y\|_{\infty}\nonumber\\
	&\leq 4c|S|^2|K|^2|L|^2h^2_G(x)\|a\|_2^2\|y\|^2_2+\varepsilon |x\|_{\infty}\|y\|_{\infty}.
	\end{align}
	Combining this with (\ref{normestimate}) proves the claim where $\kappa_{\varepsilon, K,S,a}=4c|S|^2|K|^2|L|^2$. $\hfill\blacksquare$
	\vskip 0.07in
	In the remaining part we complete the proof of the statement. Towards this first notice that, since $\Aa\prec^s_\emm\enn$ then by  \cite[Lemma 2.5]{Va10} for every  $ \varepsilon$  there exists a finite set  $S\subseteq K$ such that for all $c\in\mathscr{U}(\Aa)$ we have 
	\begin{align}\label{***} 
	\|c-P_{\enn S}(c)\|_2\leq \varepsilon. 
	\end{align}

	Next we also claim that for every finite set $S\subset G$ and every $\varepsilon >0$ there exists $b\in \mathscr{U}(\Aa)$ such that
	\begin{align}\label{**} 
	\|E_{\El(G)}\circ P_{\enn S}(b)\|_2<\varepsilon.
	\end{align} 
	Indeed, to see this first notice that $\|E_{\El(G)}\circ P_{\enn S}(b)\|^2_2=\underset{s\in S}{\sum}|\tau(bu_{s^{-1}})|^2.$ As $\Aa$ is diffuse and $S$ is finite there exists $b\in \mathscr{U}(\Aa)$ such that $\underset{s\in S}{\sum}|\tau(bu_{s^{-1}})|^2<\varepsilon$ and the claim follows.
	\vskip 0.05in
	
	Now pick $b\in\mathscr U(\Aa)$ satisfying (\ref{**}). Since $a,gsg^{-1}\in \mathscr U(\Aa)$ then using \eqref{***} two times and \eqref{**} we see that 
	\begin{align}
	&1-\varepsilon =\|gag^{-1}\|_2-\varepsilon \leq \|P_{\enn S}(gag^{-1}\|_2\leq \|P_{\enn S}(g(P_{\enn S}(b))g^{-1})\|_2+\varepsilon\nonumber\\
	&\leq \|P_{\enn S}(g(P_{\enn S}(b)-E_{\El(G)}(P_{\enn S}(b)))g^{-1})\|_2+\|E_{\El(G)}(P_{\enn S}(b))\|_2+\varepsilon\nonumber \\ &\leq \|P_{\enn S}(g(P_{\enn S}(b)-E_{\El(G)}(P_{\enn S}(b)))g^{-1})\|_2  +2\varepsilon. \label{heightb4} 
	\end{align}
	Now, taking $a=P_{\enn S}(b)-E_{\El(G)}(P_{\enn S}(b))$ and using \eqref{heightinequality} we get that the last inequality above is smaller than
	\begin{align}
	\leq \kappa_{\varepsilon,S,S,b} h_G(g) \|P_{\enn S}(b)-E_{\El(G)}(P_{\enn S}(b))\|_2+\varepsilon^{1/2}+ 2\varepsilon. \label{heightb3}
	\end{align}
	Thus \eqref{heightb4} and \eqref{heightb3} further imply that  $h_G(g)\geq\kappa_{\varepsilon, S,S,b}^{-1}(1-3\varepsilon- \varepsilon ^{1/2})$. Since this holds for all $g\in \mg$, letting $\varepsilon>0$ be sufficiently small we get the desired conclusion.\end{proof}

\section{Two Distinguished Classes of Property (T) Groups}

In this section we describe two independent classes of groups with property (T). Our main results on calculation of fundamental groups apply to factors arising from these classes. The first class, denoted by $\mathscr V$, is described in subection~\ref{Val} and is based on construction by Valette \cite{Va04}. The second class, denoted by $\mathscr S$, is described in subsection ~\ref{bo} and was previously introduced in \cite{CDK19} using a Rips construction in geometric group theory developed by Belegradek-Osin \cite{BO06}. We also highlight several algebraic properties of these groups and their von Neumann algebras that will be essential to derive our main results in the sequel.

\subsection{Class $\mathscr V$} \label{Val}

We describe a construction of group pairs with property (T) developed by Valette \cite{Va04}. Denote by $\mathbb H$ the division algebra of quaternions and by $\mathbb H_{\mathbb Z}$ its lattice of integer points. Let $n \geq 2$. Recall that $\Sp(n, 1)$ is the rank one connected simple real Lie group defined by 
$$\Sp(n, 1) = \left\{ A \in \GL_{n+ 1}(\mathbb H) \mid A^* J A = J\right\}$$
where $J = \Diag(1, \dots, 1, -1)$.  Since the subgroup $\Sp(n, 1)$ is the set of real points of an algebraic $\mathbb Q$-group, the group of integer points $\Lambda_n = \Sp(n, 1)_{\mathbb Z}$ is a lattice in $\Sp(n, 1)$ by Borel--Harish-Chandra's result \cite{BHC61}. Observe that $\Sp(n, 1)$ acts linearly on $\mathbb H^{n + 1} \cong \mathbb R^{4(n + 1)}$ in such a way that $\Lambda_n$ preserves $(\mathbb H_{\mathbb Z})^{n+1} \cong \mathbb Z^{4(n + 1)}$. For every $n \geq 2$, consider the natural semidirect product $G_n =  \mathbb Z^{4(n + 1)} \rtimes \Lambda_n$. Throughout this documents we denote by $\mathscr V$ the collection of all finite direct product groups of the form $G= G_{n_1}\times ...\times G_{n_k}$, where $n_i\geq 2$ and $k\in \mathbb N$. Also for a group $G_n \in \mathscr V$, we denote by $\emm_n= \El(G_n)$, and by $\Aa_n= \mathcal L(\mathbb Z^{4(n + 1)})$. Note that $\emm_n =  \Aa_n \rtimes \Lambda_n$.

For further use we record some properties of the groups $G_n \in \mathscr V$ and their von Neumann algebras $\emm_n$.

\begin{theorem} \label{Valettegr}
	Let $G_n \in \mathscr V$ with $n\geq 2$. Then the following hold true:
	\begin{itemize}
		\item [$(\rm i)$]  $G_n$ is an infinite icc countable discrete group with property $\T$ so that $\emm_n$ is a ${\rm II_1}$ factor with property $\T$.
		\item [$(\rm ii)$]   $\Aa_n \subseteq \emm_n$ is the unique Cartan subalgebra, up to unitary conjugacy.
		
	\end{itemize}
\end{theorem}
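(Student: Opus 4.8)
The plan is to establish the two asserted properties of the groups $G_n = \mathbb{Z}^{4(n+1)} \rtimes \Lambda_n \in \mathscr V$ in turn, relying on classical rigidity results for $\Sp(n,1)$-lattices and on the uniqueness of Cartan subalgebra theorem of Popa and Vaes.

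\textbf{Part (i): icc, property (T), and factoriality.} First I would recall that $\Lambda_n = \Sp(n,1)_{\mathbb Z}$ is an infinite lattice in the higher-rank-one simple Lie group $\Sp(n,1)$, which has property (T) by the classical result of Kostant (the rank-one groups $\Sp(n,1)$ and $F_{4(-20)}$ have (T)); hence $\Lambda_n$ has property (T). The linear action of $\Lambda_n$ on $\mathbb{Z}^{4(n+1)}$ has no nonzero invariant vectors (indeed, the $\Sp(n,1)$-representation on $\mathbb{H}^{n+1} \cong \mathbb{R}^{4(n+1)}$ is irreducible and nontrivial, so its restriction to the Zariski-dense lattice $\Lambda_n$ fixes no nonzero vector), and dually the dual action $\Lambda_n \curvearrowright \widehat{\mathbb{Z}^{4(n+1)}} = \mathbb{T}^{4(n+1)}$ has no nontrivial finite orbits and no invariant probability measure supported on a proper closed subgroup; this is the standard way to see the semidirect product is icc. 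More concretely: an element $(v,\lambda)$ has finite conjugacy class only if $\lambda$ is central-like in $\Lambda_n$ and $v$ lies in a finite $\Lambda_n$-orbit, and since $\Lambda_n$ is icc (being a lattice in a connected simple Lie group with trivial center, up to a finite center one quotients out) and acts on $\mathbb{Z}^{4(n+1)}$ with all nonzero orbits infinite, one concludes $G_n$ is icc. Property (T) for $G_n$ then follows because property (T) passes to extensions: $\mathbb{Z}^{4(n+1)}$ is amenable (abelian) but that is irrelevant — the correct statement is that $G_n$ surjects onto $\Lambda_n$ with kernel $\mathbb{Z}^{4(n+1)}$, and one uses that property (T) is inherited by a group $G$ as soon as $G$ has a property (T) quotient $\Lambda_n$ \emph{and} the kernel is... no: the clean argument is that $\Sp(n,1) \ltimes \mathbb{R}^{4(n+1)}$ has property (T) because $\Sp(n,1)$ has (T) and $H^1(\Sp(n,1), \mathbb{R}^{4(n+1)}) = 0$ (the relevant cohomology vanishes for this rank-one group acting on this module, by the semisimplicity/Hochschild--Mostow vanishing or directly since $\mathbb{R}^{4(n+1)}$ has no almost-invariant vectors under the (T) group $\Sp(n,1)$), and then $G_n$ is a lattice in that locally compact group with (T), hence has (T). Finally, an icc group always yields a $\mathrm{II}_1$ factor $\emm_n = \El(G_n)$, and property (T) of the group passes to property (T) of the factor by the Connes--Jones characterization.

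\textbf{Part (ii): uniqueness of the Cartan subalgebra.} The algebra $\Aa_n = \El(\mathbb{Z}^{4(n+1)}) = L^\infty(\mathbb{T}^{4(n+1)})$ is a Cartan subalgebra of $\emm_n = \Aa_n \rtimes \Lambda_n$ precisely because the action $\Lambda_n \curvearrowright \mathbb{T}^{4(n+1)}$ is (essentially) free and ergodic; freeness holds since the linear action of $\Lambda_n$ on $\mathbb{Z}^{4(n+1)}$ is faithful with no nonzero fixed points on any finite-index subgroup (so $\mathrm{Stab}$ of a generic character is trivial), and ergodicity follows from the dual of the fact that the $\Lambda_n$-action on $\mathbb{Z}^{4(n+1)}$ has infinite orbits — equivalently, $\Lambda_n \curvearrowright \mathbb{T}^{4(n+1)}$ has no invariant vectors in $L^2_0$, which by property (T) of $\Lambda_n$ upgrades to a spectral gap but in any case gives ergodicity. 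For uniqueness up to unitary conjugacy, I would invoke the Popa--Vaes theorem \cite{PV12}: any countable group $\Lambda$ that is weakly amenable and admits a proper cocycle into a non-amenable (in fact, into $\ell^2$ via an unbounded quasi-cocycle) — more precisely, the relevant hypothesis is that $\Lambda$ is bi-exact, or belongs to the class $\mathcal{QH}_{\mathrm{reg}}$, or is weakly amenable and has positive first $\ell^2$-Betti number / is hyperbolic — has the property that for \emph{any} free ergodic p.m.p.\ action $\Lambda \curvearrowright (X,\mu)$, the crossed product $L^\infty(X) \rtimes \Lambda$ has a unique Cartan subalgebra up to unitary conjugacy. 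Lattices in $\Sp(n,1)$ are hyperbolic (they are lattices in a rank-one group with Kazhdan property but also act properly cocompactly-or-lattice on quaternionic hyperbolic space $\mathbf{H}_{\mathbb H}^n$, which is Gromov hyperbolic), hence bi-exact and weakly amenable (Cowling--Haagerup showed $\Sp(n,1)$ has the weak-$*$ completely bounded approximation property with constant $2n+1$), so the Popa--Vaes result applies directly to $\emm_n = L^\infty(\mathbb{T}^{4(n+1)}) \rtimes \Lambda_n$ and yields the stated uniqueness of $\Aa_n$.

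\textbf{Main obstacle.} I expect the genuinely delicate point to be citing the uniqueness-of-Cartan theorem with exactly the right hypotheses: one must confirm that $\Lambda_n$ lies in the class of groups covered by \cite{PV12} (hyperbolic, or more generally bi-exact weakly amenable with the appropriate array/cocycle), and that the action $\Lambda_n \curvearrowright \mathbb{T}^{4(n+1)}$ is free — freeness is the one "arithmetic" input that needs the faithfulness and fixed-point-freeness of the integral linear representation, which follows from Zariski-density of $\Lambda_n$ in $\Sp(n,1)$ (Borel density theorem) together with irreducibility and nontriviality of $\mathbb{H}^{n+1}$ as a $\Sp(n,1)$-module. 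The property (T) claims are comparatively routine once one has Kostant's theorem for $\Sp(n,1)$ and the vanishing of the relevant first cohomology making $\Sp(n,1) \ltimes \mathbb{R}^{4(n+1)}$ a Kazhdan group.
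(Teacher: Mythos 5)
Your proposal reaches the same conclusion using the same key external inputs (Kostant's theorem for $\Sp(n,1)$, Cowling--Haagerup weak amenability, and Popa--Vaes \cite{PV12}), but some of the intermediate steps differ from the paper and one detail is slightly off.

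For property (T), you argue cohomologically that $\Sp(n,1)\ltimes\mathbb{R}^{4(n+1)}$ is Kazhdan and then pass to the lattice $G_n$; the paper instead cites Valette's \cite[Proposition 1]{Va04} that the pair $(\mathbb{R}^{4(n+1)}\rtimes\Sp(n,1),\mathbb{R}^{4(n+1)})$ has relative property (T) and combines this with property (T) of $\Lambda_n$. Both are valid; the paper's route is a direct citation with no cohomology computation, while yours is self-contained in spirit but more work to nail down. For part (ii), you try to establish essential freeness and ergodicity of $\Lambda_n\curvearrowright\mathbb{T}^{4(n+1)}$ directly via Zariski density / Borel density and irreducibility. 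The paper's argument is cleaner and worth noting: it first shows $\Aa_n=\mathcal{L}(\mathbb{Z}^{4(n+1)})$ is maximal abelian in $\emm_n$ by a purely group-theoretic check — if the $\mathbb{Z}^{4(n+1)}$-conjugacy class of $(0,\gamma)$ is finite, torsion-freeness of $\mathbb{Z}^{4(n+1)}$ forces $\gamma=\id$ — and then \emph{deduces} essential freeness and ergodicity of the dual action from the fact that $L^\infty(\mathbb{T}^{4(n+1)})\subset\emm_n$ is Cartan in a $\mathrm{II}_1$ factor. This sidesteps the arithmetic considerations you raise. Incidentally, freeness only requires the fixed set of each $\gamma\neq\id$ on $\mathbb{T}^{4(n+1)}$ to have measure zero, which is automatic for any $\gamma\neq\id$ acting by an integer matrix, so your worry about eigenvalues and Zariski density is unnecessary.

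One small inaccuracy: you write that lattices in $\Sp(n,1)$ are hyperbolic. This is true for cocompact lattices, but $\Lambda_n=\Sp(n,1)_{\mathbb{Z}}$ is a \emph{non-uniform} lattice and hence not Gromov hyperbolic. Fortunately the Popa--Vaes theorem \cite[Theorem 1.1]{PV12} applies to all lattices in rank-one simple Lie groups (and their products), not only to hyperbolic groups, so the appeal to \cite{PV12} is still justified; you just shouldn't route it through a hyperbolicity claim.
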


\begin{proof}
	$(\rm i)$ We use the notation $ g = (a, \gamma) \in   \mathbb Z^{4(n + 1)} \rtimes \Lambda_n = G_n$. Since the lattice $\Lambda_n/\{\pm \id\}$ in the adjoint Lie group $\Sp(n, 1)/\{\pm \id\}$ is icc, the conjugacy class of any element of the form $g = (a, \gamma)$ in $G_n$ with $\gamma \notin \{\pm \id\}$ is infinite. The $\mathbb Z^{4(n + 1)}$-conjugacy class of any element of the form $g = (a, -\id)$ in $G_n$ is also clearly infinite. Moreover, the exact same proof as \cite[Theorem 4, Step 3]{Va04} shows that the conjugacy class of any element of the form $g = (a, \id)$ in $G_n$ with $a \neq 0$ is infinite. It follows that $G_n$ is an infinite icc countable discrete group. By \cite[Proposition 1]{Va04}, the group pair $(\mathbb R^{4(n + 1)} \rtimes \Sp(n, 1), \mathbb R^{4(n + 1)})$ has relative property (T). Since both $ \mathbb Z^{4(n + 1)} \rtimes \Lambda_n  <  \mathbb R^{4(n + 1)} \rtimes \Sp(n, 1)$ and $\mathbb Z^{4(n + 1)} < \mathbb R^{4(n + 1)}$ are lattices, the group pair $( \mathbb Z^{4(n + 1)} \rtimes \Lambda_n, \mathbb Z^{4(n + 1)})$ also has property (T). Since $\Sp(n, 1)$ has property (T) by Kostant's result, so does its lattice $\Lambda_n < \Sp(n, 1)$. Altogether, this implies that $G_n$ has property (T). Hence $\emm_n= \El(G_n)$ has property (T) by \cite{CJ}. \vskip 0.02in 
	\vskip 0.07in
	
	\noindent $(\rm ii)$ We first show that $\Aa_n \subseteq \emm_n$ is a Cartan subalgebra. Note that it suffices to show that $\Aa_n \subseteq \emm_n$ is maximal abelian. To this end, it is enough to show that the $\mathbb Z^{4(n + 1)}$-conjugacy class in $G_n$ of any element of the form $g = (0, \gamma)$ with $\gamma \neq \id$ is infinite. Indeed, if $\gamma \in \Lambda_n$ is such that the $\mathbb Z^{4(n + 1)}$-conjugacy class of $g = (0, \gamma)$ in $\Gamma_n$ is finite, since $\mathbb Z^{4(n + 1)}$ is torsion-free, this forces $\gamma$ to act trivially on $\mathbb Z^{4(n + 1)}$ and so necessarily $\gamma = \id$.\\
	Since $L^\infty(\mathbb T^{4(n + 1)}) = \Aa_n \subset \emm_n = L^\infty(\mathbb T^{4(n + 1)})  \rtimes \Lambda_n$ is a Cartan subalgebra and since $\emm_n$ is a type ${\rm II_1}$ factor, the probability measure-preserving action $\Lambda_n \curvearrowright \mathbb T^{4(n + 1)}$ is essentially free and ergodic. Then \cite[Theorem 1.1]{PV12} shows that $\Aa_n \subset \emm_n$ is the unique Cartan subalgebra, up to unitary conjugacy. 
\end{proof}
\subsection{Class $\mathscr S$}\label{bo}

Using the powerful Dehn filling technology from \cite{Os06}, Belegradek and Osin showed in \cite[Theorem 1.1]{BO06} that for every finitely generated group $Q$ one can find a property (T) group $N$ such that $Q$ embeds into ${\rm Out}(N)$ as a finite index subgroup. This canonically gives rise to an action $Q \curvearrowright^\rho N$ by automorphisms such that the corresponding semidirect product group $N\rtimes_\rho Q$ is hyperbolic relative to $\{Q\}$. Throughout this document the semidirect products $N\rtimes_\rho Q$ will be termed Belegradek-Osin's Rips construction groups. When $Q$ is torsion free then one can pick $N$ to be torsion free as well and hence both $N$ and $N\rtimes_\rho Q$ are icc groups. Also when $Q$ has property (T) then $N\rtimes_\rho Q$ has property (T). Under all these assumptions we will denote by $\mathcal Rips(Q)$ the class of these Rips construction groups $N\rtimes_\rho Q$. 

In \cite[Sections 3,5]{CDK19} we introduced a class of property (T) groups based on the Belegradek-Osin Rips construction groups and we have proved several rigidity results for the corresponding von Neumann algebras, \cite[Theorem A]{CDK19}. Next we briefly recall this construction also highlighting its main algebraic properties that are relevant in the proofs of our main results in the next section.

\vskip 0.05in 
\noindent  {\bf Class $\mathscr S$}.
\label{semidirectt} Consider any product group $Q= Q_1\times Q_2$, where $Q_i$ are any nontrivial, bi-exact, weakly amenable, property (T), residually finite, torsion free, icc groups. Then for every $i=1,2$  consider a Rips construction  $G_i = N_i \rtimes_{\rho_i} Q\in \mathcal Rips(Q)$, let $N=N_1\times N_2$  and denote by $G= N\rtimes_\sigma Q$ the canonical semidirect product which arises from the diagonal action  $\sigma=\rho_1\times \rho_2: Q\rar {\rm Aut}(N)$, i.e. $\sigma_g (n_1,n_2)=( (\rho_1)_g(n_1), (\rho_2)_g(n_2))$ for all $(n_1,n_2)\in N$. Throughout this article the category of all these  semidirect products $G$ will denoted by {\bf Class $\mathscr S$}.

Concrete examples of semidirect product groups in class  $\mathscr S$ can be obtained if the initial groups $Q_i$ are any uniform lattices in $Sp(n,1)$ when $n\geq 2$. Indeed one can see that required conditions on $Q_i$'s follow from \cite{Oz03,CH89}.

For further reference we record some algebraic properties of groups in class $\mathscr S$. For their proofs the reader may consult \cite[Sections 3,4,5]{CDK19} and the references within. 

\begin{theorem}\label{algprop}
	For any $G= N\rtimes_\sigma Q \in \mathscr S$ the following hold
	\begin{itemize}
		
		\item[a)] $G$ is an icc, torsion free, property (T) group;
		
		\item[b)] $Q$ is malnormal subgroup of $G$, i.e.\ $gQg^{-1}\cap Q =\{e\}$ for every $g\in G\setminus Q$;
		
		\item[c)] The stabilizer   ${\rm Stab}_Q(n)=\{e\}$ for every $n \in N\setminus \{e\}$;
		\item[d)] The virtual centralizer satisfies  $vC_G(N)=1$;
		\item [e)] $G$ is the fiber product $G=G_1\times_Q G_2$; thus embeds into $G_1\times G_2$ where $Q$ embeds diagonally into $Q\times Q$.

	\end{itemize}
\end{theorem}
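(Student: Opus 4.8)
The plan is to reduce all five assertions to two layers of input: (A) structural facts about each individual Rips construction group $G_i = N_i \rtimes_{\rho_i} Q$, coming from the Belegradek--Osin construction \cite{BO06, Os06} and the relative hyperbolicity of the pair $(G_i, \{Q\})$; and (B) routine bookkeeping along the fiber product embedding $G \hookrightarrow G_1 \times G_2$. For layer (A) I would first record, for each $i = 1, 2$: that $N_i$ and $Q$ are infinite, torsion free and icc, and both have property (T) (torsion-freeness and property (T) of $N_i$ are built into the construction, while $Q = Q_1 \times Q_2$ is icc, torsion free and has property (T) as a finite product of such groups); that $Q$ is almost malnormal in $G_i$ because it is a peripheral subgroup of the relatively hyperbolic pair $(G_i, \{Q\})$, and that since $G_i$ is torsion free this upgrades to honest malnormality, $g Q g^{-1} \cap Q = \{e\}$ for all $g \in G_i \setminus Q$; and that $C_{G_i}(n) \cap Q = \{e\}$ for every $n \in N_i \setminus \{e\}$ together with $vC_{G_i}(N_i) = \{e\}$. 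The first of these last two is elementary given malnormality: if $h \in Q \setminus \{e\}$ commutes with $n \in N_i \setminus \{e\}$ then $h = n h n^{-1} \in n Q n^{-1} \cap Q = \{e\}$ (note $n \notin Q$ since $N_i \cap Q = \{e\}$), a contradiction; the triviality of $vC_{G_i}(N_i)$ is part of the Rips-construction analysis in \cite[Section 3]{CDK19}.

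With (A) in hand the rest is diagram chasing. Assertion (e) is the observation that $(n_1, n_2, q) \mapsto ((n_1, q), (n_2, q))$ is an injective homomorphism $G \to G_1 \times G_2$ whose image is exactly $\{(x_1, x_2) : x_1 \text{ and } x_2 \text{ have the same image in } Q\} = G_1 \times_Q G_2$, and which sends $Q$ onto the diagonal copy of $Q$. For (a): property (T) passes through finite products and extensions, so $N = N_1 \times N_2$ and then $G = N \rtimes_\sigma Q$ have it; a semidirect product of a torsion-free group by a torsion-free group is torsion free, since for $(n, q)^k = e$ the identity $(n,q)^k = (n\,\sigma_q(n) \cdots \sigma_{q^{k-1}}(n),\, q^k)$ forces $q^k = e$, hence $q = e$, hence $n^k = e$, hence $n = e$; and $G$ is icc because conjugating $g = (n_1, n_2, q)$ by $Q$ moves $q$ within its $Q$-conjugacy class, which is infinite unless $q = e$ (as $Q$ is icc), while if $q = e$ conjugation by $Q$ moves $(n_1, n_2)$ through its $\sigma$-orbit, which is infinite once $(n_1, n_2) \neq e$ by (c). Assertion (c) itself is immediate: $g \in {\rm Stab}_Q(n_1, n_2)$ means $g \in C_{G_1}(n_1) \cap C_{G_2}(n_2) \cap Q$, and if, say, $n_1 \neq e$ this lies in $C_{G_1}(n_1) \cap Q = \{e\}$. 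For (b): if $g = (g_1, g_2) \in G$ and some $h \neq e$ lies in $gQg^{-1} \cap Q$ (both inside the diagonal $Q$), then $h \in g_i Q g_i^{-1} \cap Q$ inside $G_i$ for $i = 1, 2$, so malnormality of $Q$ in each $G_i$ forces $g_1, g_2 \in Q$, and the fiber product condition then forces $g_1 = g_2$, i.e.\ $g$ lies in the diagonal $Q$. Finally, for (d): if $g = (g_1, g_2) \in vC_G(N)$, then projecting the (finite) $N$-conjugation orbit of $g$ onto $G_i$ realizes the full $N_i$-conjugation orbit of $g_i$, so $g_i \in vC_{G_i}(N_i) = \{e\}$ for $i = 1, 2$, and hence $g = e$.

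The main obstacle is layer (A) --- concretely, the malnormality of $Q$ in $G_i$ and the triviality of $vC_{G_i}(N_i)$. These are not formal: they rest on the full strength of the relatively hyperbolic machinery (Osin's Dehn filling, almost malnormality of peripheral subgroups, control of centralizers of loxodromic elements) combined with the Belegradek--Osin construction and its refinement in \cite{CDK19}. Everything downstream is elementary group theory inside $G_1 \times_Q G_2$, which is why the theorem can legitimately be quoted from \cite[Sections 3,4,5]{CDK19} with only the fiber product reassembly added here.
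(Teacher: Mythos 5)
The paper's own proof of this theorem is just the citation preceding the statement: ``For their proofs the reader may consult \cite[Sections 3,4,5]{CDK19} and the references within.'' Your proof follows the same route---ultimately delegating the two non-formal inputs (almost malnormality of $Q$ in each $G_i$ via relative hyperbolicity, and $vC_{G_i}(N_i)=1$) to \cite{BO06, Os06, CDK19}---but it usefully makes explicit the ``layer (B)'' bookkeeping that transfers these facts from the individual Rips construction groups $G_i$ to the fiber product $G = G_1 \times_Q G_2$. Your reductions are correct: the torsion-free upgrade from almost malnormality to malnormality (finite subgroup of a torsion-free group is trivial), the computation $C_{G_i}(n_i) \cap Q = \{e\}$ via $h = n_i h n_i^{-1} \in n_iQ n_i^{-1} \cap Q$, the verification that $(n_1,n_2,q) \mapsto ((n_1,q),(n_2,q))$ realizes the fiber product, the semidirect-product argument for torsion-freeness, the icc argument (splitting into $q \neq e$ using icc of $Q$ and $q = e$ using part (c)), and the coordinatewise reductions for (b), (c), (d) all check out. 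The only remark worth making is a matter of dependency order: your icc argument invokes (c), which in turn rests on $C_{G_i}(n_i)\cap Q = \{e\}$ from layer (A); this is non-circular since (c) does not use (a), but it would be cleaner to state the dependency explicitly when organizing a written version.
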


Finally we conclude this section with a folklore lemma related to the calculation of centralizers of elements in products of hyperbolic groups. We include some details for readers' convenience.
\begin{lemma} \label{centralizerstructure}
	Let $Q=Q_1\times Q_2$, where $Q_i$s are non-elementary torsion free, hyperbolic groups. For any $e\neq g\in Q$ the centralizer $C_Q(g)$ is of one of the following forms: $A$, $A \times Q_2 $ or $Q_1 \times A $, where $A$ is an amenable group. 
\end{lemma} 
\begin{proof}
	Let $g=(g_1,g_2)\in Q$ where $g_i\in Q_i$ and notice that $C_Q(g)=C_{Q_1}(Q_1)\times C_{Q_2}(g_2)$. Therefore to get our conclusion it suffices to show that for every $g_i\in Q_i$ either $C_{Q_i}(g_i)=Q_i$ or $C_{Q_i}(g_i)$ is an elementary group. However this is immediate once we note that for every $g_i\neq e$ the centralizer satisfies  $C_{Q_i}(g_i)\leqslant E_{Q_i}(g_i)$, where $E_{Q_i}(g_i)$ is maximal elementary subgroup containing $g_i$ of the torsion free icc hyperbolic group $Q_i$, see for example \cite{Ol91}.
\end{proof}


\section{Fundamental Group of Factors Arising from Class $\mathscr S$}
In this section we prove our main result describing  isomorphisms of amplifications of property (T) group factors $\El(G)$ associated with groups $G \in \mathscr S$. These factors were first considered  in \cite{CDK19}, where various rigidity properties were established. For instance, in  \cite[Theorem A]{CDK19} it was shown that the semidirect product decomposition of the group $G=N\rtimes Q$ is a feature that's completely recoverable from $\El(G)$. In this section we continue these investigations by showing in particular that these factors also have trivial fundamental group (see Theorem \ref{mainthm} and Corollary ~\ref{fg1}). In order to prepare for the proof of our main theorem we first need to establish several preliminary results on classifying specific subalgebras of $\mathcal L(G)$. Some of the theorems will rely on results proved in \cite{CDK19}. We recommend the reader to consult these results beforehand as we will focus mostly on the new aspects of the techniques. Throughout this section we shall use the notations introduced in Section~\ref{bo}.

\vskip 0.03in
Our first result classifies all diffuse, commuting property (T) subfactors inside these group factors.
\begin{theorem}\label{thm2}
	Let $N\rtimes Q\in \mathscr S$. Also let $\Aa_1,\Aa_2\subseteq \El(N\rtimes Q)=\emm$ be two commuting, property (T), type $\rm II_1$ factors. Then for all $k\in\{1,2\}$ one of the following holds:
	\begin{enumerate}
		\item There exists $i\in \{1,2\}$ such that $\Aa_i\prec_{\emm} \El(N_k)$;
		\item $\Aa_1\vee \Aa_2\prec_{\emm} \El(N_k)\rtimes Q$. 
	\end{enumerate}
\end{theorem}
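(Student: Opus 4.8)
The plan is to exploit the relative hyperbolicity of $G = N \rtimes Q$ with respect to $\{Q\}$, together with property (T) of $\Aa_1, \Aa_2$, to force each $\Aa_i$ either into $\El(N_1) \vee \El(N_2) = \El(N)$ or to intertwine into some $\El(gQg^{-1})$-type corner. First I would recall from \cite{CDK19} that since $Q$ is hyperbolic relative to $\{Q\}$ in $G_i = N_i \rtimes Q$ (indeed $Q$ is malnormal in $G$ by Theorem \ref{algprop}(b)), we have strong dichotomy results for property (T) subalgebras: by the rigidity of relatively hyperbolic groups (Popa--Vaes-type results applied in \cite{CDK19}), a diffuse property (T) subalgebra $\Aa \subseteq \El(G)$ must satisfy $\Aa \prec_\emm \El(N)$, because $\El(N)$ is the "core" and the quotient $Q$ is hyperbolic hence its von Neumann algebra has no property (T) diffuse subalgebras relative to the amenable pieces. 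More precisely, using the dichotomy from \cite[Theorem A / its proof]{CDK19} one shows: \emph{any} diffuse property (T) subfactor of $\El(G)$ embeds into $\El(N)$ inside $\emm$. Applying this to $\Aa_1$ and to $\Aa_2$ separately gives $\Aa_1 \prec_\emm \El(N)$ and $\Aa_2 \prec_\emm \El(N)$.

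Next I would upgrade this to the \emph{specific} factor $\El(N_k)$ using that $N = N_1 \times N_2$ with the $N_j$ malnormal-in-$N$-like behaviour coming from the bi-exactness of the $Q_i$'s inherited by the Rips fibers: in \cite{CDK19} it is shown that $\El(N_1)$ and $\El(N_2)$ are the only "building block" subfactors, and any property (T) subfactor of $\El(N)$ must intertwine into one of them. So for each $k$, for the fixed index $k$ we ask whether $\Aa_1 \prec_\emm \El(N_k)$ or $\Aa_2 \prec_\emm \El(N_k)$; if one of these holds we are in case (1). The remaining case is when \emph{neither} $\Aa_1$ nor $\Aa_2$ embeds into $\El(N_k)$, but both embed into $\El(N) = \El(N_1) \vee \El(N_2)$; since $N_k$ is a direct factor of $N$, failing to intertwine into $\El(N_k)$ while intertwining into $\El(N_1 \times N_2)$ must mean $\Aa_i$ "sees" the complementary factor $N_{3-k}$. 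Here I would use the commutation $[\Aa_1, \Aa_2] = 0$: a malnormality/commutation argument (in the spirit of Lemma \ref{malnormalcontrol} and the structure of product groups) shows that $\Aa_1 \vee \Aa_2$ must then be captured by a single product-with-$Q$ piece, namely $\El(N_k) \rtimes Q$. The mechanism is that if $\Aa_1$ quasi-normalizes into the $\El(N_{3-k})$-direction while $\Aa_2$ commutes with it, the join $\Aa_1 \vee \Aa_2$ is forced into the quasi-normalizer of $\El(N_{3-k})$ inside $\emm$, which by the group structure equals $\El(N) \rtimes (\text{the relevant factor})$; a more careful bookkeeping, using $vC_G(N) = 1$ (Theorem \ref{algprop}(d)) to kill stray central elements, pins it down to $\El(N_k) \rtimes Q$, giving case (2).

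The main obstacle I anticipate is the last step: cleanly deducing $\Aa_1 \vee \Aa_2 \prec_\emm \El(N_k) \rtimes Q$ from the two one-sided intertwinings $\Aa_i \prec_\emm \El(N)$ plus commutation and the negation of (1). Corollary \ref{cor:join} gives a \emph{join} principle but only for intertwining into a \emph{normal}-subgroup crossed product $\enn \rtimes H$ with $H \lhd G$ — and $N_k \lhd N \rtimes Q$ indeed (the diagonal $Q$-action preserves each $N_i$), so once I arrange that each $\Aa_i$ intertwines into $\El(N_k) \rtimes (\text{something})$ I can invoke it. So the real work is converting "$\Aa_i \prec \El(N)$ but $\Aa_i \nprec \El(N_k)$" into a statement about $\El(N_k) \rtimes Q$: I expect this requires analyzing the quasi-normalizer $\mathscr{QN}_\emm(\El(N_k))$, showing it equals $\El(N_k) \rtimes Q$ (using malnormality of $Q$ and that $N_k$ is normal), and then arguing that the commutant relation forces both $\Aa_1, \Aa_2$ into this quasi-normalizer. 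I would first carry out the "every property (T) subfactor embeds into $\El(N)$" reduction (this is essentially in \cite{CDK19}), then the product-decomposition step for $\El(N)$, and finally the quasi-normalizer computation feeding into Corollary \ref{cor:join}.
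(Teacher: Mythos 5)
Your proposal diverges from the paper's proof at the very first step, and that first step is in fact false as stated. You assert that \emph{any} diffuse property (T) subfactor of $\emm = \El(N\rtimes Q)$ embeds into $\El(N)$ inside $\emm$. This cannot be right: $\Theta = \mathrm{id}$ applied to $\El(Q) \subseteq \emm$ gives a property (T) II$_1$ subfactor (since $Q = Q_1 \times Q_2$ with each $Q_i$ an icc property (T) group), and $\El(Q) \nprec_\emm \El(N)$ because $Q$ has trivial intersection with every conjugate of $N$ other than via $N \cap Q = \{e\}$. This is precisely why the theorem's conclusion is a genuine dichotomy: alternative (2), $\Aa_1 \vee \Aa_2 \prec_\emm \El(N_k)\rtimes Q$, is there to capture the case where the property (T) subalgebras ``live over $Q$.'' Once that first reduction fails, the downstream plan (splitting inside $\El(N) = \El(N_1)\bar\otimes\El(N_2)$, then a quasi-normalizer computation to reach $\El(N_k)\rtimes Q$) has nothing to stand on.

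The paper's actual mechanism is quite different and, crucially, does not attempt to force the $\Aa_i$ into $\El(N)$ at the outset. It uses Theorem~\ref{algprop}(e): the fiber product structure $G = N\rtimes Q = G_1 \times_Q G_2 \leqslant G_1 \times G_2$ with $G_k = N_k \rtimes Q$, which embeds $\emm$ inside the honest tensor product $\tilde\emm := \El(G_1)\bar\otimes \El(G_2)$. In $\tilde\emm$ one can apply the product rigidity result \cite[Theorem~5.3]{CDK19} for commuting property (T) subalgebras, which directly yields the dichotomy ``$\Aa_i \prec_{\tilde\emm} \El(G_k)$ for some $i$'' versus ``$\Aa_1\vee\Aa_2 \prec_{\tilde\emm} \El(G_k\times Q)$.'' One then uses a group-intersection lemma \cite[Lemma~2.3]{CDK19} to replace $\El(G_k)$ and $\El(G_k\times Q)$ by their intersections with the subalgebra generated by $G$, namely $\El(N_k)$ and $\El(N_k)\rtimes Q$, and finally a descent argument (restriction of the intertwining data to the subalgebra $\emm \subseteq \tilde\emm$, respectively \cite[Lemma~2.5]{CDK19}) to move the intertwining from $\tilde\emm$ to $\emm$. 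None of this passes through a quasi-normalizer computation of $\El(N_k)$ inside $\emm$, and the normality input (that $N_k \lhd N\rtimes Q$, needed for Corollary~\ref{cor:join}) is not used in this theorem at all — it appears later, in Theorem~\ref{intertwiningthm}. If you want to salvage your outline, the key missing idea is the enlargement $\emm \hookrightarrow \tilde\emm$ via the fiber product together with the product rigidity dichotomy.
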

\begin{proof}
	Let $G_k= N_k \rtimes Q$ for $k \in \{1,2\}$. Notice that by part e) in Theorem \ref{algprop} we have that $N\rtimes Q\leqslant G_1\times G_2=G$ where $Q$ is embedded as  ${\rm diag}(Q)\leqslant Q\times Q$. Notice that $\Aa_1,\Aa_2\subseteq \El(N)\rtimes Q\subseteq \El(G_1\times G_2)=:\tilde{\emm}$. By \cite[Theorem~5.3]{CDK19} there exists $i\in \{1,2\}$ such that 
	\begin{enumerate}
		\item[a)] $\Aa_i\prec_{\tilde{\emm}} \El(G_k)$, or
		\item[b)] $\Aa_1\vee\Aa_2\prec_{\tilde{\emm}}\El(G_k\times Q)$.
	\end{enumerate}
	Assume a). Since $\Aa_1\vee\Aa_2\subseteq \El(N)\rtimes Q$, by using \cite[Lemma~2.3]{CDK19} we further get that $\Aa_i\prec_{\mtil}\El(G\cap h G_kh^{-1})=\El(((N_1\times N_2)\rtimes {\rm diag}(Q))\cap (N_k\rtimes Q))=\El(N_k)$ and thus we have that $c)\ \Aa_i\prec_{\mtil}\El(N_k)$. 
	
	Assume b). Then $\Aa_1\vee\Aa_2\prec_{\mtil}\El(\G\cap h(\G_k\times Q)h^{-1})=\El(h(N_k\rtimes {\rm diag}(Q))h^{-1})$. This implies that $d) \ \Aa_1\vee\Aa_2\prec_{\mtil}\El(N_k)\rtimes Q$.
	
	Note that by using \cite[Lemma~2.5]{CDK19} case d) already implies that $\Aa_1\vee\Aa_2\prec_{\emm}\El(N_k)\rtimes Q$ which gives possibility 2. in the statement.
	
	Next we show that c) gives 1. To accomplish this we only need to show that the intertwining actually happens in $\emm$. By Popa's intertwining techniques c) implies there exist finitely many $x_i\in \tilde\emm$, and $c>0$ such that 
	\begin{align}\label{eqn:1}
	\sum_{i=1}^{n} \|E_{\El(N_k)}(ax_i)\|_2^2 \geq c\text{  for  all }  a\in\mathscr U(\Aa_i). 
	\end{align}    
	
	Using basic approximations of $x_i$'s and increasing $n\in \mathbb N$ and decreasing $c>0$, if necessary, we can assume that $x_i=u_{g_i}$ where $g_i\in \hat{G}_k\times Q$. Now observe that $E_{\El(N_k)}(ax_i)=E_{\El(N_k)}(au_{g_i})=E_{\El(N_k)}(E_{\emm}(au_{g_i}))=E_{\El(N_k)}(aE_{\emm}(u_{g_i}))$. Thus (\ref{eqn:1}) becomes 
	\begin{align}
	\sum_{i=1}^{n}\|E_{\El(N_k)}(aE_{\emm}(u_{g_i}))\|^2_2\geq c \ for\ all\ u\in\mathscr U(\Aa_i)\nonumber
	\end{align}  
	and hence $\Aa_i\prec_{\emm}\El(N_k)$ as desired.\end{proof}
Next we show that actually the intertwining statements in the previous theorem can be made much more precise.  
\begin{theorem} \label{thm3}	Let $N\rtimes Q\in \mathscr S$. Also let $\Aa_1,\Aa_2\subseteq \El(N\rtimes Q)=\emm$ be two commuting, property (T), type $\rm II_1$ factors. Then for every $k \in \{1,2\}$ one of the following holds:
	\begin{enumerate}
		\item There exists $i \in \{1,2\}$ such that $\Aa_i \prec_{\emm} \El(N_k)$;
		
		\item $\Aa_1 \vee \Aa_2 \prec_{\emm} \El(Q)$.
	\end{enumerate}
\end{theorem}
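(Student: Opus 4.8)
The plan is to bootstrap from Theorem \ref{thm2}: the only new content is to upgrade the conclusion "$\Aa_1\vee\Aa_2\prec_\emm \El(N_k)\rtimes Q$" (case 2 there) to the sharper "$\Aa_1\vee\Aa_2\prec_\emm\El(Q)$" (case 2 here), under the hypothesis that case 1 fails for \emph{both} $k$. So first I would fix $k$ and invoke Theorem \ref{thm2}; if its alternative (1) holds we are done immediately. Otherwise we have $\Aa_1\vee\Aa_2\prec_\emm\El(N_k)\rtimes Q$ for $k=1$, and — running Theorem \ref{thm2} again with the roles of $N_1,N_2$ interchanged — also $\Aa_1\vee\Aa_2\prec_\emm\El(N_{k'})\rtimes Q$ where $\{k,k'\}=\{1,2\}$ (again assuming the respective alternative (1)'s fail, which is exactly the hypothesis under which we must prove (2)).

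Next I would combine these two intertwinings. The key structural point is that inside $\emm=\El(N\rtimes Q)$ we have $N_1$ and $N_2$ sitting as commuting normal subgroups of $N=N_1\times N_2$, and $\El(N_1)\rtimes Q$, $\El(N_2)\rtimes Q$ intersect in $\El(Q)$ in the appropriate sense. The clean way to exploit this is via a "double intertwining" / intersection lemma for $\prec$: if $\mathcal B\prec_\emm \El(N_1)\rtimes Q$ and $\mathcal B\prec_\emm\El(N_2)\rtimes Q$, and these subalgebras are in suitably good position (each is the crossed product of a normal subgroup of $N$ with $Q$, sitting inside $\El(N)\rtimes Q$), then $\mathcal B\prec_\emm\El(N_1)\rtimes Q\,\cap\,\El(N_2)\rtimes Q=\El(Q)$. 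Concretely I would first pass to a stable intertwining: since $\Aa_1\vee\Aa_2$ is a ${\rm II}_1$ factor (being generated by two commuting ${\rm II}_1$ subfactors) its relative commutant has trivial center in the relevant corner, so $\prec$ upgrades to $\prec^s$ for free. Then I would use the group-theoretic description of where the intertwining unitaries can live: $\Aa_1\vee\Aa_2\prec^s_\emm\El(N_k)\rtimes Q$ means (via \cite[Lemma 2.5]{Va10} applied with the coset space $N/N_k$) that a net of unitaries in $\Aa_1\vee\Aa_2$ is approximately supported, after cutting, on $\El(N_k)\rtimes Q$-cosets; doing this simultaneously for $k=1$ and $k=2$ forces the support into $(\El(N_1)\rtimes Q)\cap(\El(N_2)\rtimes Q)$. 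Since $(N_1\times N_2)\rtimes Q$ has the property that $(N_1\rtimes Q)\cap(N_2\rtimes Q)=Q$ (using $N_1\cap N_2=\{e\}$ inside $N$), this intersection is exactly $\El(Q)$, giving $\Aa_1\vee\Aa_2\prec_\emm\El(Q)$.

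To make the simultaneous-support argument rigorous I would phrase it through Popa's criterion (Theorem \ref{corner}(3)): from $\Aa_1\vee\Aa_2\prec^s_\emm\El(N_k)\rtimes Q$ one extracts, for each $k$, a finite set of elements and a constant witnessing non-vanishing of $\sum_i\|E_{\El(N_k)\rtimes Q}(x_i u y_i)\|_2^2$ over $u\in\mathscr U(\Aa_1\vee\Aa_2)$; combining the Fourier expansions and using that for $g\in N\rtimes Q$, writing $g=(n_1,n_2,q)$, the $\El(N_k)\rtimes Q$-component of $u_g$ is nonzero only when $n_{k'}=e$, one sees that simultaneous non-vanishing forces mass on group elements with $n_1=n_2=e$, i.e. in $Q$. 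Alternatively — and this is probably the cleanest route given what is already in the paper — I would look for a ready-made intersection statement: an analogue of Corollary \ref{cor:join} but for \emph{intersections} rather than joins, or the observation that $\El(N_1)\rtimes Q$ and $\El(N_2)\rtimes Q$ form a "commuting square"-like configuration over $\El(Q)$ inside $\El(N)\rtimes Q$, which is a standard consequence of $N=N_1\times N_2$. I expect the main obstacle to be precisely this step — verifying that the two intertwinings can be merged into an intertwining into the intersection — since $\prec$ is not literally intersection-friendly in general and one must use the specific product structure of $N$ and normality of $N_1,N_2$ in $N\rtimes Q$; everything else (upgrading $\prec$ to $\prec^s$, reducing to group elements, the final identification of the intersection with $\El(Q)$) is routine.
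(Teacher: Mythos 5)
Your plan diverges fundamentally from the paper's argument, and it contains a gap that I do not think is merely presentational. The paper, having reduced via Theorem~\ref{thm2} to the case $\Aa_1\vee\Aa_2\prec_\emm\El(N_k)\rtimes Q$ for a single fixed $k$, stays inside $\El(N_k)\rtimes Q$ and exploits the relative hyperbolicity of $N_k\rtimes Q$: Dehn filling (\cite{Os06,DGO11}) produces a short exact sequence $1\to\underset{\g_j}{\ast}Q_0^{\g_j}\to N_k\rtimes Q\to H\to 1$ with $H$ hyperbolic property (T) and $Q_0\leqslant Q$ of finite index; the dichotomy from \cite{PV12,CIK13} forces the commuting property (T) images $\mr_1,\mr_2$ either to intertwine into $\El(\underset{\g_j}{\ast}Q_0^{\g_j})$ or to be relatively amenable to it, and property (T) of $\mr$ (\cite[Prop.~4.6]{Po01}) kills the second alternative; \cite[Thm.~4.3]{IPP05} on free products then pushes $\mr_i$ into $\El(Q_0^{\g_j})\subseteq\El(Q)$, and malnormality of $Q$ in $N_k\rtimes Q$ via Lemma~\ref{malnormalcontrol} upgrades this to $\mr\prec\El(Q)$. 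None of this uses the second copy $\El(N_{k'})\rtimes Q$ or any intersection of two intertwinings, nor does it ever need to pass to a stable intertwining.

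Your route instead runs Theorem~\ref{thm2} for both $k$ and intersects the two targets. There are two problems. First, the case logic: the theorem requires conclusion (2) whenever (1)$_k$ fails for \emph{some} $k$, not both; the mixed case (1)$_1$ fails but (1)$_2$ holds is not covered by your outline (one can try to rule it out as contradictory, but only after the intersection machinery is already available, so the split cannot be postponed). Second, and more seriously, your claim that ``$\prec$ upgrades to $\prec^s$ for free because $\Aa_1\vee\Aa_2$ is a II$_1$ factor'' is incorrect: factoriality of $\Aa_1\vee\Aa_2$ controls its own center, whereas $\prec^s$ quantifies over projections in $(\Aa_1\vee\Aa_2)'\cap 1_{\Aa_1\vee\Aa_2}\emm 1_{\Aa_1\vee\Aa_2}$, whose center can easily be large (it can be a masa). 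This is precisely why the paper imposes the hypothesis $(\Aa_1\vee\Aa_2)'\cap r\emm r=\mathbb Cr$ only in the subsequent Theorem~\ref{intertwiningthm}, where $\prec^s$ is actually needed, and keeps Theorem~\ref{thm3} free of it. Without $\prec^s$ for both $k$ with compatible corners, \cite[Lemma~2.5]{Va10} gives you nothing to intersect. To be fair, your geometric observation is sound: since $(N_1\rtimes Q)\cap(N_2\rtimes Q)=Q$, any nonempty intersection of a right $(N_1\rtimes Q)$-coset with a right $(N_2\rtimes Q)$-coset is a single right $Q$-coset, so the simultaneous-support argument \emph{would} yield $\Aa_1\vee\Aa_2\prec\El(Q)$ once $\prec^s$ is secured. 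The gap is in securing it; the paper sidesteps the issue entirely with a one-$k$, structure-of-$N_k\rtimes Q$ argument.
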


\begin{proof}
	Using Theorem~\ref{thm2} the statement will follow once we show that $\Aa_1 \vee \Aa_2 \prec_{\emm} \El(N_k) \rtimes Q$ implies $\Aa_1 \vee \Aa_2 \prec_{\emm} \El(Q)$, which we do next. Since $\Aa_1 \vee \Aa_2 \prec_{\emm} \El(N_k)\rtimes Q $, there exists \begin{equation} \label{eq1}
	\psi: p(\Aa_1 \vee \Aa_2)p \rightarrow \psi( p(\Aa_1 \vee \Aa_2)p ) = \mr \subseteq q (\El(N_k) \rtimes Q)q
	\end{equation}
	$\ast$-homomorphism, nonzero partial isometry $v \in q \emm p$ such that \begin{align}\label{bal2}\psi(x)v =vx \text{ for all }x \in p(\Aa_1 \vee \Aa_2)p.\end{align}
	Notice that we can pick $v$ such that the support projection satisfies  $s(E_{\El(N_k\rtimes Q)}(vv^*))=q$. Moreover, since $\Aa_i$'s are factors we can assume that $p=p_1p_2$ for some $p_i\in \mathscr P(\Aa_i)$. 
	
	Next let $\mr_i= \psi(p_i \Aa_i p_i)$. Note that $\mr_1$, $\mr_2$ are commuting property (T) subfactors such that $\mr_1 \vee \mr_2= \mr \subseteq q (\El(N_k) \rtimes Q)q$. Using the Dehn filling technology from $\cite{Os06,DGO11}$, we see that there exists a short exact sequence $1\rar \underset{\g_j}{\ast} Q_0^{\g_j} \rar N_k\rtimes Q\rar H \rar 1$ where $H$ is a hyperbolic, property (T) group and $Q_0\leqslant Q$ is a finite index subgroup.  Then using $\cite{PV12, CIK13}$ in the same way as in the proof of \cite[Theorem 5.2]{CDK19} we have either a) $\mr_i \prec_{\El(N_k)\rtimes Q} \El(\underset{\g_j}{\ast} Q_0^{\g_j})$, for some $i$, or b) $\mr=\mr_1\vee \mr_2 \lessdot_{\El(N_k)\rtimes Q} \El(\underset{\g_j}{\ast} Q_0^{\g_j})$. Since $\mr_i$'s have property (T) then by \cite[Proposition 4.6]{Po01} so does $\mathcal R$ and hence possibility b) entails $\mr \prec_{\El(N_k)\rtimes Q} \El(\underset{\g_j}{\ast} Q_0^{\g_j})$. Summarizing, cases a)-b) imply that $\mr_i \prec_{\El(N_k)\rtimes Q} \El(\underset{\g_j}{\ast} Q_0^{\g_j})$, for some $i$. Then using \cite[Theorem 4.3]{IPP05} this further implies $\mr \prec_{\El(N_k)\rtimes Q} \El(Q_0^{\g_j})$ and hence $\mr_i \prec_{\El(N_k)\rtimes Q} \El(Q_0) \subseteq \El(Q)$. As $Q \leqslant N_k \rtimes Q$ is malnormal, using the same arguments as in the proof of \cite[Theorem 5.3]{CDK19} one can show that $\mr  \prec_{\El(N_k) \rtimes Q} \El(Q)$. Indeed, let 
	$\phi: r\mr_i r \rightarrow  \phi( r\mr_i r ):=\mrt \subseteq q_1 \El(Q) q_1$
	be a unital $\ast$-homomorphism, and let $w \in q_1 \El(N_k\rtimes Q) r$ be a nonzero partial isometry such that 	\begin{equation}\label{bal}
	\phi(x)w=wx \text{ for all } x \in r \mr_i r.
	\end{equation} Note that $ww^{\ast} \in  \El(Q) $ by Lemma ~\ref{malnormalcontrol} and hence $\mrt ww^{\ast}=w\mr_i w^{\ast} \subseteq \El(Q)$. For every $u \in \mr_{i+1}$ we have \begin{align*}
	\mrt wuw^{\ast} &=   \mrt ww^{\ast} wuw^{\ast} = w \mr_i w^{\ast} wuw^{\ast}= ww^{\ast} wu \mr_i w^{\ast} = w u\mr w^{\ast} \\
	&= w u\mr_i w^{\ast} w w^{\ast}= wuw^{\ast} w \mr_i w^{\ast}= wuw^{\ast} \mrt ww^{\ast}= wuw^{\ast}\mrt.
	\end{align*}
	
	Thus Lemma \ref{malnormalcontrol} again implies that $wuw^{\ast} \in \El(Q)$. Altogether these show that $w \mr_{i+1}w^{\ast} \subseteq \El(Q)$. Combining with the above we get $w\mr w^*=w \mr_i \mr_{i+1} w^{\ast}=w w^{\ast}w \mr_i\mr_{i+1}w^{\ast}= w \mr_i w^{\ast}w \mr_{i+1}w^{\ast} \subseteq \El(Q)$. 
	From relation \eqref{bal} we have that $w^*w\in \mr$. Also by \eqref{bal2} we have $\mr v = v p(\Aa_1\vee \Aa_2) p$ and hence $v^*\mr v = v^*v p(\Aa_1\vee \Aa_2) p$. Hence there exists $p_0\in\mathscr P(p(\Aa_1\vee \Aa_2)p)$ so that $v^*w^*wv=v^*v p_0$. Next we argue that $wvp_0\neq 0$. Indeed, otherwise we would have $wv=0$ and hence  $wvv^* = 0$. As $w\in \El(N_k\rtimes Q)$ this would imply that $wE_{\El(N_k\rtimes Q)}(vv^*) = 0$ and hence $w= wq =w s(E_{\El(N_k\rtimes Q)}(vv^*)) = 0$, which is a contradiction. To this end, combining the previous relations we have $wv p(\Aa_1\vee \Aa_2)p p_0\subseteq wv p(\Aa_1\vee \Aa_2)p v^*vp_0=wv p(\Aa_1\vee \Aa_2)p v^*w^*wv= w\mr vv^* w^*wv= w\mr w^* wv\subseteq \El(Q)wv$. Since the partial isometry $ wv\neq 0 $ the last relation clearly shows that $\Aa_1\vee \Aa_2\prec_\emm \El(Q)$, as desired. \end{proof} 

\begin{theorem} \label{intertwiningthm}
	Let $\Aa_1,\Aa_2\subseteq \El(N)\rtimes Q=\emm$ be two commuting, property (T), type $\rm II_1$ factors such that $(\Aa_1\vee\Aa_2)'\cap r(\El(N)\rtimes Q)r=\mathbb{C}r$. Then one of the following holds:
	\begin{enumerate}
		\item[a)] $\Aa_1\vee\Aa_2\prec^s_{\emm}\El(N)$, or
		\item[b)] $\Aa_1\vee\Aa_2\prec^s_{\emm}\El(Q)$.
	\end{enumerate}
\end{theorem}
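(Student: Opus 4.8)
The plan is to bootstrap from Theorem~\ref{thm3}, which already gives the two intertwining alternatives but only with $\prec_{\emm}$ rather than $\prec^s_{\emm}$, and for each $k$ separately rather than uniformly. So the proof proceeds in two stages: first upgrade each non-strict intertwining to a strict one using the hypothesis that the relative commutant $(\Aa_1\vee\Aa_2)'\cap r\emm r$ is trivial, and second combine the information obtained from $k=1$ and $k=2$ into the stated dichotomy.

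For the upgrade, recall that $\Pl\prec_\emm\En$ together with $\Pl$ having trivial relative commutant (i.e.\ $\Pl'\cap 1_\Pl\emm 1_\Pl=\mathbb C 1_\Pl$) automatically yields $\Pl\prec^s_\emm\En$, because the defining condition for $\prec^s$ quantifies over projections in $\mathscr Z(\Pl'\cap 1_\Pl\emm 1_\Pl)$, and here that center is trivial. Thus applying Theorem~\ref{thm3} with $k=1$: either $\Aa_i\prec_\emm\El(N_1)$ for some $i\in\{1,2\}$, or $\Aa_1\vee\Aa_2\prec_\emm\El(Q)$; in the latter case the triviality of $(\Aa_1\vee\Aa_2)'\cap r\emm r$ immediately promotes it to $\Aa_1\vee\Aa_2\prec^s_\emm\El(Q)$, which is conclusion (b) and we are done. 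So assume the first alternative holds for $k=1$, say $\Aa_1\prec_\emm\El(N_1)$ (the case $\Aa_2\prec_\emm\El(N_1)$ is symmetric). Now run Theorem~\ref{thm3} again with $k=2$: either $\Aa_j\prec_\emm\El(N_2)$ for some $j$, or $\Aa_1\vee\Aa_2\prec_\emm\El(Q)$ and again we land in case (b). So we may assume $\Aa_1\prec_\emm\El(N_1)$ and $\Aa_j\prec_\emm\El(N_2)$ for some $j\in\{1,2\}$.

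The heart of the argument, and the step I expect to be the main obstacle, is to conclude $\Aa_1\vee\Aa_2\prec^s_\emm\El(N)$ from these two one-sided intertwinings into $\El(N_1)$ and $\El(N_2)$ respectively. The key observation is that $\El(N_1)$ and $\El(N_2)$ are commuting subalgebras of $\El(N)=\El(N_1)\bar\otimes\El(N_2)\subseteq\emm=\El(N)\rtimes Q$, and both sit inside $\El(N)$. First I would promote $\Aa_1\prec_\emm\El(N_1)$ and $\Aa_j\prec_\emm\El(N_2)$ to the strict versions using triviality of the relevant relative commutants (the relative commutant of $\Aa_1$, resp.\ $\Aa_j$, in the corner is contained in $(\Aa_1\vee\Aa_2)'\cap r\emm r=\mathbb Cr$). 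If $j=1$, then $\Aa_1\prec^s_\emm\El(N_1)$ and $\Aa_1\prec^s_\emm\El(N_2)$; since $\El(N_i)\subseteq\El(N)\rtimes N_{3-i}\rtimes Q$ isn't quite the form needed for Corollary~\ref{cor:join} directly, I would instead use that $\El(N_1)\subseteq\El(N)$ and $\El(N_2)\subseteq\El(N)$, invoke Corollary~\ref{cor:join} (with the normal subgroup $N\lhd G$) to the pair of commuting algebras---but care is needed since $\Aa_1$ intertwining into two different corners must be leveraged through the fact that any single algebra intertwining into $\El(N_1)$ and separately into $\El(N_2)$ must in fact intertwine into their intersection-type piece; here one uses a malnormality/relative-position argument as in \cite[Lemma 2.3]{CDK19} or a direct Fourier computation, giving $\Aa_1\prec^s_\emm\El(N_1\cap N_2)$, which forces $\Aa_1$ to be non-diffuse inside the relevant corner, contradicting that $\Aa_1$ is a type $\rm II_1$ factor. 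Hence $j=2$: we have $\Aa_1\prec^s_\emm\El(N_1)\subseteq\El(N)$ and $\Aa_2\prec^s_\emm\El(N_2)\subseteq\El(N)$. Now $\Aa_1$ and $\Aa_2$ are commuting $\ast$-subalgebras, $N\lhd G$ is normal, and $\emm=\El(N)\rtimes Q$ with $\El(N)=\El(N)\rtimes\{e\}$; applying Corollary~\ref{cor:join} with $H=N$ yields $\Aa_1\vee\Aa_2\prec^s_\emm\El(N)$, which is conclusion (a). The remaining symmetric cases ($\Aa_2\prec_\emm\El(N_1)$, and the various assignments of the index $j$) are handled identically by relabeling, so the dichotomy (a) or (b) holds in all cases. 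The one genuinely delicate point throughout is ensuring the relative commutants that one needs to be trivial really are controlled by the single hypothesis $(\Aa_1\vee\Aa_2)'\cap r\emm r=\mathbb Cr$—for the intermediate corners arising after composing with the $\ast$-homomorphisms $\theta$ from Popa's criterion, one tracks the support projections carefully, as the relative commutant of $\theta(p\Aa_i p)$ in its ambient corner pulls back to a subalgebra of $(\Aa_1\vee\Aa_2)'\cap p\emm p\subseteq\mathbb Cp$.
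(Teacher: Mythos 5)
Your overall strategy mirrors the paper's, but there is a genuine error at the step you yourself flag as delicate, and it is not fixed by the hand-wave at the end.

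You assert that $\Aa_1\prec_\emm\El(N_1)$ promotes to $\Aa_1\prec^s_\emm\El(N_1)$ because ``the relative commutant of $\Aa_1$ \ldots\ in the corner is contained in $(\Aa_1\vee\Aa_2)'\cap r\emm r=\mathbb Cr$.'' That containment is backwards: since $\Aa_1\subseteq\Aa_1\vee\Aa_2$, taking commutants gives $(\Aa_1\vee\Aa_2)'\cap r\emm r\subseteq\Aa_1'\cap r\emm r$, and the latter is far from trivial---it contains all of $\Aa_2$, which is a $\rm II_1$ factor. So the elementary observation ``$\prec$ plus trivial relative commutant gives $\prec^s$'' cannot be applied directly to $\Aa_1$ alone. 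The correct mechanism, which the paper uses, is \cite[Lemma 2.4]{DHI16}: $\Aa_{i}\prec_\emm\El(N_k)$ yields a nonzero projection $z\in\mathcal Z\bigl(\mathscr N_{r\emm r}(\Aa_{i})'\cap r\emm r\bigr)$ with $\Aa_{i}z\prec^s_\emm\El(N_k)$. The point is that the relevant relative commutant is that of the \emph{normalizer} of $\Aa_i$, not of $\Aa_i$ itself; since $\Aa_1\vee\Aa_2\subseteq\mathscr N_{r\emm r}(\Aa_i)''$, one gets $\mathscr N_{r\emm r}(\Aa_i)'\cap r\emm r\subseteq(\Aa_1\vee\Aa_2)'\cap r\emm r=\mathbb Cr$, forcing $z=r$. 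Your hypothesis does control this commutant, but not the one you invoked.

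A second, smaller gap: to rule out the case where a single $\Aa_i$ intertwines into both $\El(N_1)$ and $\El(N_2)$, you suggest obtaining $\Aa_i\prec^s_\emm\El(N_1\cap N_2)$ by a malnormality or Fourier argument. Popa intertwining does not generally enjoy an intersection property, so this step is not justified as stated. The paper instead passes from $\prec^s$ to relative amenability via \cite[Lemma 2.6]{DHI16}, then uses the intersection property of relative amenability for the regular subalgebras $\El(N_1),\El(N_2)$ (\cite[Proposition 2.7]{PV11}) to get $\Aa_i\lessdot_\emm\mathbb C$, i.e.\ $\Aa_i$ amenable, contradicting property (T). That detour through relative amenability is essential and should replace your sketch.

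Once these two points are repaired, the combination via Corollary~\ref{cor:join} (with $H=N\lhd G$) to conclude $\Aa_1\vee\Aa_2\prec^s_\emm\El(N)$ is exactly as in the paper.
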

\begin{proof}
	Fix $k\in\{1,2\}$. By Theorem~\ref{thm3} we get that either 
	\begin{enumerate}
		\item[i)] $i_k\in\{1,2\}$ such that $\Aa_{i_k}\prec_{\emm} \El(N_k)$, or
		\item[ii)] $\Aa_1\vee\Aa_2\prec_{\emm} \El(Q)$.
	\end{enumerate}
	Note that case ii) together with the assumption $(\Aa_1\vee\Aa_2)'\cap r(\El(N)\rtimes Q)r=\mathbb{C}r$ and \cite[Lemma 2.4]{DHI16} already give $b)$. So assume that case i) holds. Hence for all $k\in\{1,2\}$, there exists $i_k\in\{1,2\}$ such that $\Aa_{i_k}\prec_{\emm}\El(N_k)$. Using \cite[Lemma~2.4]{DHI16}, there exists $0\neq z\in\mathcal Z(\enn_{r\emm r}(\Aa_{i_k})'\cap r\emm r)$ such that $\Aa_{i_k}z \prec^s_{\emm}\El(N_k)$. Since $\Aa_1\vee\Aa_2\subseteq \enn_{r\emm r}(\Aa_{i_k})''$, then $\enn_{r\emm r}(\Aa_1{i_k})'\cap r\emm r\subseteq (\Aa_1\vee\Aa_2)'\cap r\emm r=\mathbb{C}r$. Thus we get that $z=r$. In particular 
	\begin{align}\label{1}
	\Aa_{i_k}\prec^s_{\emm} \El(N_k).  
	\end{align}
	We now briefly argue that $k\neq l\Rightarrow i_k\neq i_l$. Assume by contradiction that $i_1=i_2=i$. Then (\ref{1}) implies that $\Aa_i\prec^s_{\emm}\El(N_1)$ and $\Aa_i\prec^s_{\emm}\El(N_2)$. By \cite[Lemma 2.6]{DHI16}, this implies that $\Aa_i\lessdot_{\emm} \El(N_1)$ and $\Aa_i\lessdot_{\emm} \El(N_2)$. Note that $\El(N_i)$ are regular in $\emm$ and hence by \cite[Proposition 2.7]{PV11} we get that $\Aa_i\lessdot_{\emm} \El(N_1)\cap\El(N_2)=\mathbb{C}$, which implies that $\Aa_i$ is amenable. This contradicts our assumption that $\Aa_i$ has property $(T)$. Thus $i_k\neq i_l$ whenever $k\neq l$. Therefore we have that $\Aa_{i_1}\prec_{\emm}^s\El(N_1)\subseteq \El(N)$ and $\Aa_{i_2}\prec_{\emm}^s\El(N_2)\subseteq \El(N)$. Using Corollary~\ref{cor:join} we get that $\Aa_1\vee\Aa_2\prec^s_{\emm}\El(N)$, which completes the proof.  
\end{proof}

Our next result concerns the location of the "core" von Neumann algebra.

\begin{theorem} \label{coreunit} Let $N\rtimes Q, M\rtimes P \in \mathscr S$. Let $p\in \El(M\rtimes P)$ be a projection and assume that  $\Theta: \El(N \rtimes Q) \rightarrow p\El(M \rtimes P)p$ is a $\ast$-isomorphism.  Then there exists a unitary $v \in \mathscr U(p\El(M \rtimes P)p)$ such that $\Theta(\El(N))= vp\El(M)pv^{\ast}$. 
\end{theorem}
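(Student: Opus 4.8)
The plan is to feed the abelian core subalgebras through the intertwining machinery built up in Theorems \ref{thm2}--\ref{intertwiningthm} and then upgrade a one-sided intertwining to a genuine unitary conjugacy using regularity plus the structural facts in Theorem \ref{algprop}. First I would set $\Aa_i = \Theta(\El(N_i))$ for $i=1,2$, so that $\Aa_1, \Aa_2 \subseteq p\El(M\rtimes P)p$ are commuting property (T) ${\rm II}_1$ factors (they are factors since $N_i$ is icc, and they have property (T) by Theorem \ref{algprop}(a) applied to the Rips pieces, which is inherited by $\El(N_i)$). Moreover $\Aa_1 \vee \Aa_2 = \Theta(\El(N))$ and its relative commutant inside $p\El(M\rtimes P)p$ equals $\Theta(\El(N)'\cap \El(N\rtimes Q))$; using Theorem \ref{algprop}(d), the virtual centralizer $vC_G(N)=1$, so $\El(N)'\cap \El(N\rtimes Q)=\mathbb{C}$, hence $(\Aa_1\vee\Aa_2)'\cap p\El(M\rtimes P)p = \mathbb{C}p$. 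This is exactly the hypothesis needed to invoke Theorem \ref{intertwiningthm} (after cutting by $p$ and noting that $p\El(M\rtimes P)p \cong \El(M\rtimes P)^{\tau(p)}$; one has to be slightly careful here and state the intertwining results for corners, which is routine by amplification). Theorem \ref{intertwiningthm} then gives that either (a) $\Aa_1\vee\Aa_2 \prec^s_{\El(M\rtimes P)} \El(M)$, or (b) $\Aa_1\vee\Aa_2 \prec^s_{\El(M\rtimes P)} \El(P)$.

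Next I would rule out case (b). If $\Theta(\El(N)) \prec^s \El(P)$, then since $\El(N)$ is regular in $\El(N\rtimes Q)$, conjugating the normalizer shows $\Theta(\El(N\rtimes Q)) = p\El(M\rtimes P)p \prec^s \El(P)$ — more precisely one uses that a subalgebra intertwining into $\El(P)$ together with its normalizer forces the whole algebra into $\El(P)$, via \cite[Lemma 2.4]{DHI16} or the standard normalizer absorption (this is the same kind of argument as in the proof of Theorem \ref{intertwiningthm}). But $\El(P)$ is not the whole of a corner of $\El(M\rtimes P)$ unless $M$ is trivial, which it is not; more concretely $\El(M)$ is diffuse and $\El(M)\not\prec \El(P)$ since $P\leqslant M\rtimes P$ is malnormal (Theorem \ref{algprop}(b)) with infinite index and $M\neq 1$. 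This contradiction eliminates (b), so we are in case (a): $\Theta(\El(N)) \prec^s_{\El(M\rtimes P)} \El(M)$.

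Now I would promote $\Theta(\El(N))\prec^s \El(M)$ to a unitary conjugacy. The key point is that both $\Theta(\El(N))$ and $\El(M)$ are regular, or at least that their normalizers are large enough: $N\lhd N\rtimes Q$ and $M\lhd M\rtimes P$, so $\El(N)$ is regular in $\El(N\rtimes Q)$ and $\El(M)$ is regular in $\El(M\rtimes P)$. The standard mechanism (cf.\ the proof technique behind \cite[Theorem A]{CDK19} and Popa's conjugacy criterion) is: a subalgebra $\mathcal{B}\prec^s \mathcal{C}$ with $\mathcal{B}$ regular, $\mathcal{C}$ with well-behaved relative commutant, yields a unitary $v$ with $v\mathcal{B}v^* \subseteq \mathcal{C}$; then one checks the inclusion is onto. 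For the reverse inclusion, after conjugating we have $v\Theta(\El(N))v^* \subseteq p\El(M)p$ (up to adjusting $p$ — here is where I would argue $p$ can be taken in $\El(M)$); then $p\El(M)p$ and $v\Theta(\El(N))v^*$ are both Cartan-less regular subfactors, and a dimension/index count, or the observation that $\El(M)'\cap \El(M\rtimes P)=\mathbb{C}$ forces $v\Theta(\El(N))v^*$ to be all of $p\El(M)p$, closes the argument. I expect the main obstacle to be this last upgrade step — carefully handling the projection $p$ (showing $p$ may be taken inside $\El(M)$, or equivalently that the amplification bookkeeping works out) and proving the intertwining is onto rather than a proper corner. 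This is precisely the kind of delicate normalizer/regularity argument that \cite{CDK19} developed, so I would model it closely on \cite[Theorem 5.2 and its corollaries]{CDK19}, invoking malnormality of $Q$ and $P$ (Theorem \ref{algprop}(b)) to control the quasinormalizers via Lemma \ref{malnormalcontrol}.
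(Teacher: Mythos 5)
Your setup is correct and matches the paper's strategy: you identify $\Aa_1 = \Theta(\El(N_1))$, $\Aa_2 = \Theta(\El(N_2))$ as commuting property (T) II$_1$ factors, use $vC_G(N)=1$ (Theorem~\ref{algprop}(d)) to verify the trivial relative commutant hypothesis, and invoke Theorem~\ref{intertwiningthm} to get the dichotomy $\Theta(\El(N)) \prec^s_{\emm} \El(M)$ or $\Theta(\El(N)) \prec^s_{\emm} \El(P)$. Your elimination of the $\El(P)$ case is also substantively the same as the paper's: push the regularity of $\El(N)$ through the quasinormalizer using malnormality of $P$ (Lemma~\ref{malnormalcontrol}), conclude $p\emm p\prec_\emm \El(P)$, and derive a contradiction from the fact that $\El(P)\subseteq \emm$ has infinite index. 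The paper invokes \cite[Theorem 2.3]{CD18} for the finite index consequence; your stated reason ("$\El(M)\not\prec\El(P)$ by malnormality") is slightly off-target but reflects the right intuition.

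However, the last third of your proposal -- promoting $\Theta(\El(N))\prec^s_\emm \El(M)$ to the equality $\Theta(\El(N)) = vp\El(M)pv^*$ -- is exactly where the real content of the theorem lies, and you acknowledge this yourself ("I expect the main obstacle to be this last upgrade step"). Your sketch ("a dimension/index count" or "normalizer/regularity argument") does not supply the mechanism, and the paper needs several nontrivial moves here that you do not identify. Concretely, the paper (i) first establishes the \emph{reverse} intertwining $p\El(M)p\prec_{p\emm p}\Theta(\El(N))$ by running the same dichotomy argument symmetrically; (ii) invokes the two-sided intertwining setup of \cite[Lemma 8.4]{IPP05}, using that $\Theta(\El(N))$ is a regular irreducible subfactor of $p\emm p$ while $p\El(M)p$ is quasi-regular and irreducible, to produce a $\ast$-isomorphism $\psi: r\Theta(\El(N))r\to\mathcal R\subseteq p\El(M)p$ of finite index with $\mathcal R'\cap p\El(M)p=\mathbb Cp$ together with an implementing partial isometry $v$; (iii) shows via a Fourier-coefficient computation and Popa's index estimate \cite[Lemma 3.1]{Po02} that $\mathcal R'\cap p\emm p$ is finite dimensional, and then uses torsion-freeness of $P$ to conclude $\mathcal R'\cap p\emm p = \mathbb Cp$, which forces $v$ to be a genuine unitary with $v\Theta(\El(N))v^*\subseteq p\El(M)p$; and finally (iv) applies Choda's Galois correspondence \cite{Ch78} inside $\El(N)\subseteq \El(N)\rtimes Q$ to identify the intermediate algebra as $\El(N)\rtimes L$ for some subgroup $L\leqslant Q$, and then torsion-freeness of $Q$ plus the finite index condition gives $L=\{e\}$, i.e.\ equality. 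None of steps (i)--(iv) appear, even in outline, in your proposal; without them the step from a strict intertwining to a unitary conjugacy onto the full corner $p\El(M)p$ does not follow.
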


\begin{proof} From assumptions there are $Q_1$, $Q_2$, $P_1$, $P_2$ icc, torsion free, residually finite, hyperbolic property (T) groups so that $Q=Q_1 \times Q_2$ and $P=P_1 \times P_2$. We also have that $N=N_1\times N_2$ and $M=M_1\times M_2$ where $N_i$'s and $M_i$'s have property (T). Denoting by $\emm= \mathcal L(M\rtimes P)$, $\Aa =  \Theta(\mathcal L(N))$ and $\Aa_i = \Theta(\mathcal L(N_i))$ we see that  $\Aa_1$ and $\Aa_2$ are commuting property (T) subalgebras of $p \emm p$. Using part b) in Theorem \ref{intertwiningthm} we have that  $\{\Aa_1\vee \Aa_2\}'\cap \enn = \Theta (\mathcal L(N)'\cap \El(N\rtimes Q))= \mathbb C \Theta(1)= \mathbb Cp$. Using Theorem ~\ref{intertwiningthm} we get either
	\begin{enumerate}
		\item [a)]$\Aa \prec^s_{\emm} \El(M)$ or,
		
		\item [b)] $\Aa \prec^s_{\emm} \El(P)$.
	\end{enumerate}
	Assume case b) above holds. 
	Then there exists projections $r \in \Aa$, $q \in \El(P)$, a nonzero partial isometry $v \in q \emm r$, and a $\ast$-homomorphism $\psi:r\Aa r \rightarrow \psi( r \Aa r) \subseteq q \El(P)q$ such that $\psi(x)v=vx$ for all $x \in r \Aa r$. 
	Arguing exactly as in the proof of \cite[Theorem 5.5]{CDK19}, we can show that $v \mathcal{QN}_{r \emm r}(r \Aa r)''v^{\ast} \subseteq  q\El(P) q$. 
	
	Now, $\mathcal{QN}_{r \emm r}(r \Aa r)''= r \emm r$, using \cite[Lemma 3.5]{Po03}. Thus, $\emm \prec_{\emm} \El(P)$ and hence $\El(P)$ has finite index in $\emm$ by \cite[Theorem 2.3]{CD18}, which is a contradiction. Hence we must a), i.e.\ $\Aa \prec^s_{\emm} p \El(M) p$.		
	
	Repeating the above argument verbatim, we get that $p \El(M) p \prec_{p \emm p} \Aa$.	
	Let $\enn =\El(N\rtimes Q)$ and $\mathcal B= p\El(M)p$. Note that $\Aa \subseteq \Theta(\enn) $ and $\bee \subseteq p \emm p$ are amplifications of genuine crossed product inclusions. Also by part d) in Theorem \ref{algprop}  $\Aa$ is regular irreducible subfactor of $\Theta (\enn)=p \emm p$, while $\bee$ is a quasi-regular irreducible subfactor of $p \emm p$ (as $\mathcal{QN}_{p \emm p}(p\bee p)''= p\mathcal{QN}_{\emm}(\El(M))p $). Thus, we are in the setting of the first part of the proof of \cite[Lemma 8.4]{IPP05} and using the same arguments there we conclude one can find $r \in \mathscr P(\Aa)$, a unital $\ast$-isomorphism $\psi: r \Aa r \rightarrow \mathcal R:=\psi( r \Aa r) \subseteq p \El(M) p$, and a partial isometry $v \in p \emm p$	satisfying $v^{\ast}v =r$, $vv^{\ast} \in R' \cap p \emm p$ and  $\psi(x)v=vx$ for all $x\in r\Aa r$. Moreover, we have that $\mathcal R \subseteq p \El(M) p$ has finite index, and $\mathcal R ' \cap p \El(M) p = \mc p$.  
	Notice that by \cite[Lemma 3.1]{Po02}, we have that $[R' \cap p \emm p: (p \El(M)p)' \cap p \emm p] \leq [p \El(M) p : \mathcal R]$. As $(p \El(M)p)' \cap p \emm p= \mc$, we conclude that $\mathcal R' \cap p \emm p$ is finite dimensional.
	
	Let $x\in \mr'\cap p\emm p$. Since  $xr=rx$ for all $r\in \mr$ we have that $r{\sum}_gx_gu_g={\sum}_g x_g u_g r$, where $x={\sum}_{g\in P}x_gu_g$ is the Fourier decomposition of $x$ in $\emm= \El(M)\rtimes P$. Thus ${\sum}_g rx_gu_g={\sum}_gx_g\sigma_g (r)u_g$ and hence $ rx_g=x_g\sigma_p(r)$ for all $g$ in $r$ . In particular this entails that  
	\begin{align}\label{commutant} 
	&x_gx_g^*\in \mr'\cap p\El(M) p=\mc p
	\\
	&\label{eq:2}
	x_gu_g\in \mr'\cap p\emm p.
	\end{align}  
	From (\ref{commutant}) we see that $x_g$ is a scalar multiple of a unitary in $p\emm p$. Hence by normalization we may assume that each $x_g$ is itself either a unitary or zero. 
	
	Let $K$ be the set of all $g\in P$ for which there exists   $x_g \in \mathscr U (p\El(M)p)$ such that  $x_{g}u_{g}\in \mathscr U (\mr'\cap p\emm p)$ and notice that $K$ is a subgroup of $P$. Note that $\{x_{g}u_{g} \}_{g \in K}$ is a $\tau$-orthogonal family in $\mr'\cap p\emm p$. As $\mr'\cap p \emm p$ is finite dimensional, we get that $K$ is a finite subgroup of $P$.   As $P$ is torsion free (see part a) in Theorem \ref{algprop}) then $K=\{e\}$. In particular this shows that $\mathcal R'\cap p\emm p= \mathcal R'\cap p\El(M)p = \mathbb C p$ which implies $vv^*=p$ and since $v^*v=r \leq p$ we get $r=p$ and  $v\in \mathscr U(p\emm p)$. Thus $\psi(x)=vxv^*$ for all $x\in r \Aa r$ and  hence $\mathcal R=v r \Aa r v^{\ast}= v\Aa v^* \subseteq p \El(M) p$.  Let $v= \Theta(w_0)$, where $w_0 \in \euu(\El(N \rtimes Q))$. Thus, we get that $\El(N) \subseteq w_0^{\ast} \Theta^{-1}(p \El(M)p) w_0 \subseteq \El(N) \rtimes Q$. By, \cite{Ch78} (see also \cite[Corollary 3.8]{CD19}), we deduce that there exists a subgroup $L \leqslant Q$ such that $w_0^{\ast} \Theta^{-1}(p \El(M)p) w_0= \El(N) \rtimes L$. As $[w_0^{\ast} \Theta^{-1}(p \El(M)p) w_0: \El(N)]$ is finite, we must have that $L$ is a finite subgroup of the torsion free group $Q$. Thus $L=\{e\}$ which gives that $\Theta(\El(N))=\Aa= v^{\ast} p\El(M)p v$.\end{proof}


Next we show that in the previous result we can also identify up to corners the algebras associated with the acting groups. The proof relies heavily on the classification of commuting property (T) subalgebras provided by  \ref{intertwiningthm} and the malnormality of the acting groups. 
\begin{theorem} \label{actinggp} 
	Let $N\rtimes Q, M\rtimes P \in \mathscr S$. Let $p\in \El(M\rtimes P)$ be a projection and assume that  $\Theta: \El(N \rtimes Q) \rightarrow p\El(M \rtimes P)p$ is a $\ast$-isomorphism.  Then the following hold
	
	\begin{enumerate}
		\item There exists  $v \in \mathscr U (p\El(M \rtimes P)p)$ such that $\Theta(\El(N))= vp\El(M)pv^{\ast}$, and 
		\item There exists $u \in \mathscr U (\El(M \rtimes P))$ such that $\Theta(\El(Q))=pu^{\ast} \El(P)up$.
	\end{enumerate}
	
\end{theorem}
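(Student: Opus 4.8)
The plan is to bootstrap from Theorem \ref{coreunit}, which is precisely part $(1)$: there is $v\in\mathscr U(p\El(M\rtimes P)p)$ with $\Theta(\El(N))=vp\El(M)pv^{*}$. Replacing $\Theta$ by $\Ad(v^{*})\circ\Theta$ I may assume $\Theta(\El(N))=p\El(M)p$; the unitary $u$ witnessing $(2)$ for the original $\Theta$ is then recovered from the one produced below by extending a partial isometry inside the factor $\El(M\rtimes P)$, which I will not track. Write $\emm=\El(M\rtimes P)=\El(M)\rtimes P$ and $\Aa=\Theta(\El(Q))\subseteq p\emm p$; note that $\Theta(\El(N\rtimes Q))=p\emm p$, that $\Aa$ normalizes $p\El(M)p$, and that $\Aa\vee p\El(M)p=p\emm p$.

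First I would obtain a rough localization of $\Aa$ via the classification of commuting property (T) subalgebras. Writing $\El(Q)=\El(Q_{1})\,\bar\otimes\,\El(Q_{2})$ and using that each $Q_{i}$ is icc with property (T), the algebras $\Theta(\El(Q_{1}))$ and $\Theta(\El(Q_{2}))$ are commuting property (T) ${\rm II}_{1}$ subfactors of $p\emm p$. Since $Q$ is malnormal and icc in $N\rtimes Q$ (Theorem \ref{algprop}(b)), its virtual centralizer $vC_{N\rtimes Q}(Q)$ is trivial, hence $\El(Q)'\cap\El(N\rtimes Q)=\mathbb C$ and therefore $\big(\Theta(\El(Q_{1}))\vee\Theta(\El(Q_{2}))\big)'\cap p\emm p=\mathbb Cp$. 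Applying Theorem \ref{intertwiningthm} to this commuting pair, with ambient algebra $\emm=\El(M)\rtimes P\in\mathscr S$, I get that either $\Aa\prec^{s}_{\emm}\El(M)$ or $\Aa\prec^{s}_{\emm}\El(P)$.

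Next I would rule out the first alternative. The heart of the matter is that $\El(Q)\nprec_{\El(N\rtimes Q)}\El(N)$: indeed, if $g_{n}\to\infty$ in $Q$ then for all $a,b\in N\rtimes Q$ the image of $au_{g_{n}}b$ in $(N\rtimes Q)/N=Q$ escapes to infinity, so $E_{\El(N)}(au_{g_{n}}b)=0$ for $n$ large, and Theorem \ref{corner} applies. Hence $\Aa\nprec_{p\emm p}p\El(M)p=\Theta(\El(N))$; since $\Theta(\El(N))$ is regular in $p\emm p$ and is normalized by $\Aa$, a corner comparison turns this into $\Aa\nprec_{\emm}\El(M)$. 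Concretely, arguing as in the proof of \cite[Theorem 5.5]{CDK19} (and of Theorem \ref{coreunit} above), $\Aa\prec^{s}_{\emm}\El(M)$ would force $\El(M)$ to be of finite index in $\emm$ (via \cite[Theorem 2.3]{CD18}), which is absurd since $P=P_{1}\times P_{2}$ is infinite. Thus $\Aa\prec^{s}_{\emm}\El(P)$.

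The remaining step, which I expect to be the main obstacle, is to promote $\Aa\prec^{s}_{\emm}\El(P)$ to an honest unitary conjugacy $\Theta(\El(Q))=pu^{*}\El(P)up$. Fix $q\in\mathscr P(\El(P))$, $r\in\mathscr P(\Aa)$, a $\ast$-homomorphism $\psi\colon r\Aa r\to q\El(P)q$ and a nonzero partial isometry $w\in q\emm r$ with $\psi(x)w=wx$ for all $x\in r\Aa r$. As in Theorem \ref{coreunit}, $ww^{*}$ commutes with $\psi(r\Aa r)$, and since $\psi(r\Aa r)$ is diffuse, Lemma \ref{malnormalcontrol} applied to the malnormal subgroup $P\leqslant M\rtimes P$ (over the trivial base algebra) forces $ww^{*}\in\El(P)$, whence $w(r\Aa r)w^{*}\subseteq\El(P)$. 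One cannot conclude here with the ``regular versus quasi-regular'' comparison of \cite[Lemma 8.4]{IPP05} that finished the proof of Theorem \ref{coreunit}, because $\El(Q)$ — unlike the regular algebra $\El(N)$ — is its own normalizer and quasinormalizer in $\El(N\rtimes Q)$. Instead I would first establish the reverse intertwining $\El(P)\prec_{\emm}\Aa$ by running the entire preceding analysis for $\Theta^{-1}$ (Theorems \ref{coreunit} and \ref{intertwiningthm} apply verbatim to isomorphisms between corners of amplifications of $\mathscr S$-factors), and then combine the two one-sided intertwinings with Lemma \ref{malnormalcontrol} (malnormality of $Q$ in $N\rtimes Q$ and of $P$ in $M\rtimes P$) and with the already-identified cores $\Theta(\El(N))=p\El(M)p$ to produce a partial isometry intertwining all of $\Aa$ onto a finite-index subfactor $\mathcal R\subseteq\El(P)$ with $\mathcal R'\cap\emm=\mathbb Cp$. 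A Fourier-expansion argument as at the end of the proof of Theorem \ref{coreunit}, together with the absence of nontrivial finite subgroups in $P$ and $Q$ (Theorem \ref{algprop}(a)), then promotes the partial isometry to a unitary $u\in\mathscr U(\El(M\rtimes P))$ and forces $\mathcal R$ to be a full corner of $\El(P)$, giving $\Theta(\El(Q))=pu^{*}\El(P)up$. The delicate point throughout is reconciling the non-regularity of the acting-group algebras $\El(Q)$ and $\El(P)$ with a full (not merely corner-wise) unitary conjugacy, which is exactly where the matched cores and the malnormality of the acting groups must be used in tandem.
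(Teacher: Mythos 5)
Your proposal tracks the paper's proof faithfully through the first three steps: part $(1)$ is Theorem~\ref{coreunit}; the dichotomy for $\bee:=\Theta(\El(Q))$ comes from Theorem~\ref{intertwiningthm} applied to the commuting pair $\Theta(\El(Q_1))$, $\Theta(\El(Q_2))$; and the alternative $\bee\prec^{s}_{\emm}\El(M)$ is excluded. For this exclusion the paper combines $\bee\prec^{s}\El(M)$ with the already-known $\Theta(\El(N))\prec^{s}\El(M)$ via Corollary~\ref{cor:join} to conclude $p\emm p\prec_{\emm}\El(M)$ and hence that $P$ is finite, whereas you observe directly that $\El(Q)\nprec_{\El(N\rtimes Q)}\El(N)$ and pull back; both work. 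You also correctly isolate the genuine obstruction in the last step: $\El(Q)$ and $\El(P)$ are not (quasi-)regular, so the \cite[Lemma 8.4]{IPP05} device from Theorem~\ref{coreunit} is unavailable.

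The gap is in your resolution of that last step. The two one-sided intertwinings $\bee\prec\El(P)$ and $\El(P)\prec\bee$ do not by themselves produce a finite-index copy of $\bee$ inside $\El(P)$; in Theorem~\ref{coreunit} the finite-index statement is precisely the output of \cite[Lemma 8.4]{IPP05}, which needs the regularity you have just observed is missing. Moreover the ``Fourier-expansion argument as at the end of Theorem~\ref{coreunit}'' has no analogue here: that computation of $\mathcal R'\cap p\emm p$ expands elements of $\emm=\El(M)\rtimes P$ in Fourier coefficients \emph{lying in} $\El(M)$, and exploits that $\El(M)$ is the normal base of the crossed product; there is no corresponding decomposition of $\emm$ with coefficients in $\El(P)$ compatible with multiplication by $\El(P)$, since $P$ is not normal in $M\rtimes P$. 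The paper sidesteps all of this. After obtaining $\bee\subseteq pu^{*}\El(P)up$, it applies Theorem~\ref{intertwiningthm} a second time, now to $pu^{*}\El(P_1)up$ and $pu^{*}\El(P_2)up$ (whose joint relative commutant in $p\emm p$ is $\mathbb Cp$ by malnormality and iccness of $P$), ruling out $\prec\Aa$ since $\El(P)$ and $\El(M)$ are $\tau$-perpendicular, to get $pu^{*}\El(P)up\prec^{s}_{p\emm p}\bee$. Lemma~\ref{malnormalcontrol} upgrades this to a unitary $w\in\mathscr U(p\emm p)$ with $w(pu^{*}\El(P)up)w^{*}\subseteq\bee$, and then composing the two inclusions gives $w\bee w^{*}\subseteq\bee$, forcing $w\in\mathscr{QN}_{p\emm p}(\bee)''=\bee$ by malnormality of $Q$; conjugating back yields $pu^{*}\El(P)up\subseteq w^{*}\bee w=\bee$ and hence equality. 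This quasinormalizer trick is what you should replace your finite-index/Fourier sketch with.
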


\begin{proof} As part 1. follows directly from Theorem \ref{coreunit} we only need to show part 2. 
	
	Recall that $Q=Q_1 \times Q_2$, $P=P_1 \times P_2$, $N=N_1\times N_2$ and $M=M_1\times M_2$ where $Q_i$, $P_i$, $N_i$ and $M_i$ are icc, property (T) groups. Denote by $\emm= \mathcal L(M\rtimes P)$, $\Aa=\Theta(\El(N))$, $\bee =  \Theta(\mathcal L(Q))$ and $\bee_i = \Theta(\mathcal L(Q_i))$. Then we see that  $\bee_1, \bee_2\subset p\emm p$ are commuting property (T) subalgebras such that $\bee_1\vee \bee_2=\bee$. Moreover, by part d) in Theorem \ref{algprop} we have that $\{\bee_1\vee \bee_2\}' \cap p\emm p=\bee' \cap \Theta(\El(N \rtimes Q)) =\mathbb C \theta (1)=\mathbb C p$.  Hence by Theorem~\ref{intertwiningthm}, we either have that a) $\bee \prec^s_{\emm} \El(M)$, or b) $\bee \prec^s_{\emm} \El(P)$. By part 1. we also know that $\Aa \prec_{\emm}^s \El(M)$. Thus, if a) holds, then Theorem \ref{joinintertwining} implies that $p\emm p=\Theta(\El(N\rtimes Q)) \prec_{\emm} \El(M)$. In turn this implies that $Q$ is finite, a contradiction. Hence b) must hold, i.e.\ $\bee \prec^s_{\emm} \El(P)$.
	\vskip 0.05in 
	Thus there exist projections $q \in \bee$, $r \in \El(P)$, a nonzero partial isometry $v \in \emm$ and a $\ast$-homomorphism $\psi: q\bee q \rightarrow \mathcal R:=\psi(q\bee q) \subseteq r \El(P)r$ such that $\psi(x)v=vx$ for all $x \in q\bee q $. Note that $vv^{\ast} \in R' \cap r \emm r$. Since $\mathcal R \subseteq r \El(P)r$ is diffuse, and $P \leqslant  M \rtimes P$ is a malnormal subgroup (part c) in Theorem \ref{algprop}), we have that $\mathscr{QN}_{r \emm r}(\mathcal R)'' \subseteq r \El(P)r$. Thus $vv^{\ast} \in r \El(P)r$ and hence $vq\bee qv^{\ast}= \mathcal Rvv^{\ast} \subseteq r \El(P)r$. Extending $v$ to a unitary $v_0$ in $\emm$ we have that $v_0q\bee qv_0^{\ast} \subseteq \El(P)$. As $\El(P)$ and $\bee $ are factors, after perturbing $v_0$ to a new unitary $u$, we may assume that $u \bee u^{\ast} \subseteq \El(P)$. This further implies that $upu^*\in \El(P)$ and  since $\Theta(1)=p$ we also have \begin{align} \label{eqn1}
	\bee =p\bee p\subseteq pu^*\El(P)up.
	\end{align} Next we claim that 
	
	\begin{align}\label{cont1}pu^*\El(P)up\prec_{\emm} \bee\end{align}
	To see this first notice that,  since $P$ is malnormal in $M\rtimes P$ and $P$ is icc  (see parts a) and c) in Theorem \ref{algprop}) then $(pu^*\El(P)up)'\cap \Theta(\El(N\rtimes Q))=(pu^*\El(P)up)'\cap p\emm p= u^*(\El(P)'\cap \El(M\rtimes Q))u p=\mathbb C p$. Thus using Theorem \ref{intertwiningthm} we have either a) $pu^*\El(P)up\prec^s_{p\emm p} \Aa$ or b) $pu^*\El(P)up \prec^s_{p\emm p} \bee$. Assume a) holds. By part 1. we have $pu^*\El(P)up\prec^s_{p\emm p} \Aa= v p \El(M)pv^*$; in particular, this implies that $\El(P)\prec_\emm \El(M)$ but this contradicts the fact that  $\El(M)$ and $\El(P)$ are diffuse algebras that are $\tau$-perpendicular in $\emm$. Thus b) holds which proves the claim.
	
	Using \eqref{cont1} together with malnormality of $\Theta (\El(Q))$ inside $\Theta(\El(N\rtimes Q))$ and arguing exactly as in the proof of relation \eqref{eqn1} we conclude that there exists $w\in \mathcal{U}(p\emm p)$ such that 
	\begin{align} \label{eqn2}
	wpu^*\El(P)upw^* \subseteq \bee.
	\end{align} 
	Combining (~\ref{eqn1}) and (~\ref{eqn2})  we get that $w\bee w^*\subseteq wpu^*\El(P)upw^*\subseteq \bee$ and hence $w\in \mathscr{Q}\mathcal{N}_{p\emm p}(\bee)"=\bee$. Thus we get 
	\begin{align}\label{eqn3}
	pu^*\El(P)up\subseteq w^*\bee w=\bee.
	\end{align}
	Combining (\ref{eqn1}) and $(\ref{eqn3})$ we get the theorem. 
\end{proof}
Finally, we are now ready to derive the main result of this paper.
\begin{theorem} \label{mainthm}
	Let $N\rtimes Q, M\rtimes P \in \mathscr S$ with $N=N_1\times N_2$ and $M=M_1\times M_2$. Let $p\in \El(M\rtimes P)$ be a projection and assume that  $\Theta: \El(N \rtimes Q) \rightarrow p\El(M \rtimes P)p$ is a $\ast$-isomorphism. Then $p=1$ and one can find  $\ast$-isomorphisms, $\Theta_i: \El(N_i) \rightarrow \El(M_i)$, a group isomorphism $\delta: Q \rightarrow P$, a multiplicative character $\eta: Q \rightarrow \mathbb T$, and a unitary $u \in \mathscr U( \El(M \rtimes P))$ such that  for all $ g \in Q$, $x_i \in N_i$ we have that$$ \Theta((x_1 \otimes x_2)u_{g})= \eta(g)u(\Theta_1(x_1) \otimes \Theta_2(x_2)v_{\delta(g)})u^{\ast}. $$
\end{theorem}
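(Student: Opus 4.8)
The plan is to feed the subalgebra identifications of Theorem~\ref{actinggp} into the height machinery (Theorems~\ref{height} and \ref{kv15}) in order to upgrade the isomorphism $\Theta$ to a genuine group isomorphism, and then to untwist the resulting cocycle using property (T); the tensor splitting $N=N_1\times N_2$, $M=M_1\times M_2$ is handled at the very end by the classification of commuting property (T) subalgebras.

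First I normalize. By Theorem~\ref{actinggp}(2) I may replace $\Theta$ by $\Ad(u)\circ\Theta$ for a suitable $u\in\mathscr U(\El(M\rtimes P))$ and assume $\Theta(\El(Q))=p\El(P)p$ with $p\in\El(P)$. The updated $\Theta$ still obeys the hypotheses of Theorem~\ref{coreunit}, so the argument in its proof (via Theorem~\ref{intertwiningthm}, ruling out the $\El(P)$ alternative) gives $\Theta(\El(N))\prec^s_{\El(M\rtimes P)}\El(M)$. Now I apply Theorem~\ref{height} with $H=M$, $G=P$ and $\emm=\El(M\rtimes P)$: the diagonal action $P\curvearrowright M$ has $|\mathrm{Stab}_P(m)|=1$ for every $m\in M\setminus\{e\}$ by Theorem~\ref{algprop}(c), and taking $\Aa=\Theta(\El(N))$ and $\mg=\Theta(\{u_g:g\in Q\})\subseteq p\El(P)p\subseteq\El(P)$, a group of unitaries normalizing $\Aa$ (because $\{u_g\}_{g\in Q}$ normalizes $\El(N)$), I obtain $h_P(\mg)>0$.

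Next I invoke Theorem~\ref{kv15} with ambient algebra $\El(P)$, group $P$ and the above $\mg\subseteq\mathscr U(p\El(P)p)$, so that $\mg''=\Theta(\El(Q))=p\El(P)p$. Condition~(1) holds because $Q$ is icc, so its conjugation representation is weakly mixing and $\Theta$ transports this to $L^2(p(\El(P)p\ominus\mathbb Cp))$; condition~(2) holds because $P=P_1\times P_2$ is icc, hence $C_P(h)$ has infinite index in $P$ for every $h\neq e$, and $p\El(P)p\prec_{\El(P)}\El(C_P(h))$ would force $\El(P)\prec_{\El(P)}\El(C_P(h))$, i.e.\ $[P:C_P(h)]<\infty$; condition~(3) is the previous paragraph. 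Thus $p=1$ and, after one further conjugation of $\Theta$ by a unitary in $\El(P)$, one has $\Theta(u_g)=\eta(g)v_{\delta(g)}$ for all $g\in Q$, where $\delta\colon Q\to P$ is a group homomorphism and $\eta\colon Q\to\mathbb T$ a character. Injectivity of $\delta$ follows from $\El(N)\cap\El(Q)=\mathbb C$ inside $\El(N\rtimes Q)$, and surjectivity from $\El(\delta(Q))=\Theta(\El(Q))=\El(P)$; hence $\delta$ is an isomorphism, and $p=1$ is the first assertion of the theorem.

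It remains to produce a single conjugating unitary and the tensor splitting. Re-applying Theorem~\ref{coreunit} to the current $\Theta$ (now with $p=1$) yields $v\in\mathscr U(\El(M\rtimes P))$ with $\Theta(\El(N))=v\El(M)v^*$; since $\delta$ is onto, $\Theta(\El(N))$ is normalized by every $v_h$, $h\in P$, so $w_h:=v^*v_hv$ lies in $\mathscr N_{\El(M\rtimes P)}(\El(M))$, which (since the action $P\curvearrowright\El(M)$ is free by Theorem~\ref{algprop}(c)) consists exactly of elements $m\,v_k$ with $m\in\mathscr U(\El(M))$, $k\in P$. Writing $w_h=m_hv_{\pi(h)}$ produces an automorphism $\pi\in\mathrm{Aut}(P)$ (injectivity again using $\El(N)\cap\El(Q)=\mathbb C$, surjectivity using that the algebras generate) and a $1$-cocycle $(m_h)$ into $\mathscr U(\El(M))$ for the action $h\mapsto\sigma_{\pi(h)}$. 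Here property (T) of $P$ is used to untwist $(m_h)$, namely to replace $v$ by $vz$ for a suitable $z\in\mathscr U(\El(M))$ so that $w_h\in\mathbb T\,v_{\pi(h)}$, the scalar part being absorbed into $\eta$. Setting $u:=v$, $\delta':=\pi\circ\delta$ and $\Theta_0:=\Ad(v^*)\circ\Theta|_{\El(N)}\colon\El(N)\to\El(M)$ gives $\Theta(xu_g)=u\big(\Theta_0(x)\,\eta(g)\,v_{\delta'(g)}\big)u^*$. Finally, $\Theta_0$ is an isomorphism $\El(N_1)\bar\otimes\El(N_2)\to\El(M_1)\bar\otimes\El(M_2)$ whose images $\Theta_0(\El(N_i))$ are commuting property (T) subfactors with trivial relative commutant, so the classification of such subalgebras (as in \cite[Theorem~5.3]{CDK19} and Theorem~\ref{intertwiningthm}, together with the regularity and malnormality recorded in Theorem~\ref{algprop}) shows that, after possibly swapping $M_1$ with $M_2$ and absorbing one more unitary of $\El(M)$ into $u$, one has $\Theta_0=\Theta_1\otimes\Theta_2$ with $\ast$-isomorphisms $\Theta_i\colon\El(N_i)\to\El(M_i)$, which is the assertion. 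The main difficulty is this last stretch: producing a single intertwining unitary is a Bass--Serre/IPV-type reconstruction of the crossed-product inclusion, and the cocycle untwisting is precisely where property (T) of $P$ is indispensable; one must also take care that the conjugations performed earlier do not destroy the intertwining $\Theta(\El(N))\prec^s\El(M)$ needed for Theorem~\ref{height}, which is why Theorem~\ref{coreunit}/\ref{intertwiningthm} is re-applied to the updated $\Theta$ rather than transported.
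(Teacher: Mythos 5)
Your overall strategy mirrors the paper's closely — you identify $\Theta(\El(N))$ and $\Theta(\El(Q))$ via Theorems~\ref{coreunit} and \ref{actinggp}, feed this into Theorem~\ref{height}, and then invoke Theorem~\ref{kv15} to reconstruct the group. However, there are two places where your argument is genuinely different from the paper's and, as written, has gaps.

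First, your verification of condition~(2) of Theorem~\ref{kv15}. You claim that $p\El(P)p\prec_{\El(P)}\El(C_P(h))$ would force $[P:C_P(h)]<\infty$, contradicting icc-ness. The equivalence ``$\El(G)\prec_{\El(G)}\El(H)\Leftrightarrow[G:H]<\infty$'' is not a general fact; it is standard only for irreducible factor inclusions, and here $\El(C_P(h))$ need not be a factor (e.g.\ $C_P(h)$ may be virtually cyclic) and certainly need not be irreducible in $\El(P)$. The paper instead exploits the fine structure of $C_P(h)$ provided by Lemma~\ref{centralizerstructure}: $C_P(h)$ is $A$, $A\times P_2$, or $P_1\times A$ with $A$ amenable, and then the key input in the non-amenable cases is Ozawa's solidity theorem \cite{Oz03} applied to the biexact factors $\El(P_i)$, using that $\mathcal G''=\Theta(\El(Q_1))\bar\otimes\Theta(\El(Q_2))$ is generated by two commuting non-amenable property~(T) subfactors. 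Your finite-index shortcut bypasses both the centralizer structure and solidity, and does not obviously hold.

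Second, your final ``cocycle untwisting.'' After obtaining $\Theta(\El(N))=v\El(M)v^*$ and $\Theta(u_g)=\eta(g)v_{\delta(g)}$ separately, you form $w_h=v^*v_hv=m_hv_{\pi(h)}$ and assert that ``property~(T) of $P$ is used to untwist $(m_h)$.'' Property~(T) of the acting group does not by itself untwist a $\mathscr U(\El(M))$-valued $1$-cocycle for an arbitrary action: cocycle superrigidity statements of this kind require specific structure (Bernoulli-type, malleability, etc.) that is not available for $P\curvearrowright\El(M)$. The paper avoids producing a cocycle at all: it first arranges $\Theta(\El(N))=\El(M)$, obtains $\Theta(u_g)=\eta(g)wv_{\delta(g)}w^*$ from Theorem~\ref{kv15}, and then uses the normalization argument of \cite[Theorem 4.5]{CD19} to show directly that the \emph{same} unitary $w$ satisfies $w^*\El(M)w=\El(M)$ (and symmetrically $w\Theta(\El(N))w^*\subseteq\Theta(\El(N))$), so that $\Psi={\rm ad}(w^*)\circ\Theta$ restricts to an isomorphism $\El(N)\to\El(M)$, after which the tensor splitting follows from \cite[Theorem 5.1]{CDK19}. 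Your route introduces a cocycle that has no obvious reason to be a coboundary, and the appeal to property~(T) at that point is a gap that would need a concrete reference or a new argument.
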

\begin{proof} Throughout this proof we will denote by $\emm=\El(N\rtimes Q)$. Using Theorem~\ref{coreunit}, and replacing $\Theta$ by $\Theta \circ Ad(v)$ if necessary, we may assume that $\Theta(\El(N))= p\El(M)p$. By Theorem~\ref{actinggp}, there exists $u \in \mathscr U(\emm)$ such that $ \Theta(\El(Q)) \subseteq u^{\ast}\El(P)u$, where $\emm= \El(M \rtimes P)$. Moreover $\Theta(1)=p$, $upu^{\ast} \in \El(P)$ and also $\Theta(\El(Q))= pu^{\ast}\El(P)up$. Next we denote by $\G= u^{\ast} P u$ and by $\mathcal G =\{\Theta (u_g)\, :\,g\in Q\}$. Using these notations we show the following  
	
	\begin{claim} \label{ht}
		$h_{\G}(\mathcal G)>0.$
	\end{claim}
	
	\noindent \textbf{Proof of Claim ~\ref{ht}}. Notice that $\mathcal G \subseteq \El(\G)$ is a group of unitaries normalizing $\Theta(\El(N))$. Moreover, by Theorem \ref{algprop} we can see that the action $\sigma: P\rar {\rm Aut}( M )$ satisfies all the conditions in the hypothesis of Theorem~\ref{height} and thus using the conclusion of the same theorem we get the claim. $\hfill \blacksquare$
	\begin{claim}\label{noint}
		Let $e\neq g \in \G$. Then $\mathcal G'' \nprec \El(C_{\G}(g))$.
	\end{claim}
	
	\noindent \textbf{Proof of Claim \ref{noint}}. Since $\G$ is isomorphic to the product of two biexact groups, say $\G_1 \times \G_2$, by Lemma~\ref{centralizerstructure} we get that $C_{\G}(g)= A$, $\G_1 \times A$, or $A\times \G_2$ for an amenable group $A$. If $C_{\G}(g)= A$ then since  $\mathcal G$ is non-amenable we clearly have $\mathcal G'' \nprec \El(C_\G(g))$.  Next assume $C_{\G}(g)= A\times \G_2$ and assume by contradiction that $\mathcal G''\prec \El(C_\G(g)$. As $Q=Q_1\times Q_2$ for $Q_i$ property (T) icc group, then $\mathcal G'' =\Theta (\El(Q_1)) \bar\otimes  \Theta(\El( Q_2))$ is a $\rm II_1$ factor with property (T). Since $\mathcal G '' \prec \El(A\times \G_2)=\El(A)\bar\otimes \El(\G)$  and $\El(A)$ is amenable then it follows that $\mathcal G '' \prec \El(\G)$. However by \cite[Theorem 1]{Oz03} this is impossible as $\El(\G_2)$ is solid and $\mathcal G''$ is generated by two non-amenable commuting subfactors. The case  $C_\G(g)=\G_1\times A$ follows similarly. $\hfill\blacksquare$
	
	\begin{claim}\label{wmix}
		The unitary representation $\{{\rm Ad}(v)\}_{v \in \mathcal G}$ on $L^2(p \El(\G)p \ominus \mc p)$ is weakly mixing.
	\end{claim}
	
	\noindent \textbf{Proof of Claim \ref{wmix}}. First note we have that $ \Theta(\El(Q))=\mathcal G''=p\El(\G)p$.  Also since $Q$ is icc then using \cite[Proposition 3.4]{CSU13} the representation  ${\rm Ad} (Q)$ on $L^2(\El(Q)\ominus \mc)$ is weak mixing. Combining these two facts, we get that the representation $\mathcal G$  on $L^2(p \El(\G)p \ominus \mc p)$ is weak mixing, as desired.$\hfill\blacksquare$
	\vskip 0.05in
	Claims 2-4 above together with Theorem \ref{kv15} show that $p=1$ and moreover there exists  unitary $w\in \El(M\rtimes P)$, a group isomorphism $\delta: Q\rar P$ and a multiplicative character $\eta: Q\rar \mathbb T$ such that $\Theta (u_g) = \eta(g) w v_{\delta(g)}w^*$ for all $g\in Q$. Since $\Theta(\El(N))=\El(M)$ then the same argument as in proof of \cite[Theorem 4.5]{CD19} (lines 10-27 on page 25) shows that i) $w^*\El(M)w\subseteq \El(M)$. However re-writing the previous relation as  $v_{h} =\overline{\eta(g)} w^* \Theta (u_{\delta^{-1}(h)}) w $ for all $h\in P$ and applying the same argument as above for the decomposition $\emm=\Theta(\El(N)) \rtimes \Theta (Q)$ we get that ii) $w\Theta (\El(N))w^*\subseteq \Theta(\El(N))$. Then combining i) and ii) we get that $w^*\El(M)w= \El(M)$. Now from the above relations it follows clearly that the map $\Psi={\rm ad}(w^*)\circ \Theta: \El(N)\rar \El(M)$ is  a $\ast$-isomorphism, and  $\Theta(xu_g)= \eta(g)w( \Psi(x)u_{\delta(g)}) w^*$ for all $x\in \El(N)$.
	Finally, proceeding as in the proof of \cite[Theorem 5.1]{CDK19} one can further show that the isomorphism $\Psi$ arises from a tensor of $\ast$-isomorphisms $\Phi_i: \El(N_i)\rar \El(M_i)$. We leave these details to the reader.\end{proof}

\begin{cor}\label{fg1} For any $G=N\rtimes Q \in \mathscr S $ the fundamental group of $\El(G)$ is trivial, i.e.\ $\mathcal F(\El(G))=1$. 
\end{cor}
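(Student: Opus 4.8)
The plan is to obtain this as an immediate consequence of Theorem~\ref{mainthm}, applied with the second pair of data chosen equal to the first, i.e.\ $M\rtimes P := N\rtimes Q$. Since $G=N\rtimes Q$ is icc by Theorem~\ref{algprop}(a), $\El(G)$ is a ${\rm II}_1$ factor, so the amplifications $\El(G)^t$ are well defined for every $t>0$, and (as recalled in the introduction) $\mathcal F(\El(G))$ is a subgroup of $\mathbb R_+^\times$. Fix $t\in\mathcal F(\El(G))$. Since $t^{-1}\in\mathcal F(\El(G))$ as well and $(\El(G)^s)^{1/s}\cong \El(G)$, after possibly replacing $t$ by $t^{-1}$ we may assume $t\le 1$; then, by definition of the amplification, $\El(G)^t$ is the isomorphism class of $p\El(G)p$ for a projection $p\in\mathscr P(\El(G))$ with $\tau(p)=t$.

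Next I would record the hypothesis $\El(G)^t\cong\El(G)$ as a $\ast$-isomorphism $\Theta:\El(N\rtimes Q)\rightarrow p\El(N\rtimes Q)p$. Because $N\rtimes Q\in\mathscr S$ already has exactly the shape required by Theorem~\ref{mainthm} (namely $N=N_1\times N_2$ with $N_i$ of property (T), and $Q=Q_1\times Q_2$ with the $Q_i$ icc, torsion free, residually finite, hyperbolic, property (T) groups built into the definition of $\mathscr S$), we may invoke Theorem~\ref{mainthm} with $(M_1\times M_2)\rtimes P := (N_1\times N_2)\rtimes Q$. Its conclusion forces $p=1$, hence $t=\tau(p)=1$. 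Since $t\in\mathcal F(\El(G))$ was arbitrary, this gives $\mathcal F(\El(G))=\{1\}$.

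I do not expect a genuine obstacle here: all the mathematical content sits in Theorem~\ref{mainthm}. The only points deserving a sentence of care are (i) that a compression $p\El(G)p$ with $\tau(p)\le 1$ literally fits the template $p\El(M\rtimes P)p$ with $M\rtimes P\in\mathscr S$, which is immediate on taking $M\rtimes P=N\rtimes Q$; and (ii) the reduction from $t>1$ to $t\le 1$, which uses only that $\mathcal F(\El(G))$ is a group together with $(\El(G)^s)^{1/s}\cong\El(G)$.
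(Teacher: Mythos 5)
Your proof is correct and is exactly the intended argument: the paper states the corollary as an immediate consequence of Theorem~\ref{mainthm} (applied with $M\rtimes P=N\rtimes Q$, which forces $p=1$ and hence $t=1$), and your write-up simply supplies the standard reduction to $t\le 1$ using the group structure of $\mathcal F(\El(G))$ and the definition of amplification.
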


\section{Fundamental Group of Factors Arising from Class $\mathscr V$}
In this section we describe another class of examples of property (T) factors with trivial fundamental group, namely the $\El(G)$ associated with the group in class $G \in \mathscr V$ from subsection \ref{Valettegr}. Using the properties highlighted there in combination with Popa--Vaes's Cartan rigidity results \cite{PV12} and Gaboriau's $\ell^2$-Betti numbers invariants \cite{Ga02} we show these are pairwise stably non-isomorphic property (T) factors with trivial fundamental group.

\begin{theorem}\label{fgv}
	Let $G \in \mathscr V$. 	The following properties hold: 
	\begin{itemize}
		\item [$(\rm i)$]  For every $G\in \mathscr V$ the fundamental group satisfies $\mathcal F(\El(G))=\{1\}$;
		\item [$(\rm ii)$] The family $\{\El(G) \,:\,G \in \mathscr V\}$ consists of pairwise stably non-isomorphic II$_1$ factors.
	\end{itemize}
\end{theorem}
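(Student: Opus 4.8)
The strategy, following \cite{PV11}, is to use uniqueness of the Cartan subalgebra to turn any $\ast$-isomorphism between (amplifications of) the factors $\El(G)$ into a stable orbit equivalence of the underlying probability measure preserving actions, and then to read off the conclusions from Gaboriau's $\ell^2$-Betti number invariants \cite{Ga02}. For $n\geq 2$ write $\mathcal R_n$ for the orbit equivalence relation of the essentially free ergodic action $\Lambda_n\curvearrowright\mathbb T^{4(n+1)}$ dual to the linear action on $\mathbb Z^{4(n+1)}$, so that $\emm_n=\El(G_n)=\Aa_n\rtimes\Lambda_n$ with $\Aa_n=L^\infty(\mathbb T^{4(n+1)})$ the unique Cartan subalgebra up to unitary conjugacy by Theorem~\ref{Valettegr}$(\rm ii)$; uniqueness of the Cartan subalgebra is preserved under amplification. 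For a finite product $G=G_{n_1}\times\cdots\times G_{n_k}\in\mathscr V$ one has $\El(G)=\bigotimes_{i=1}^k\emm_{n_i}$.

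I would first record the $\ell^2$-Betti number computation. Since $\Sp(n,1)$ is a rank one simple Lie group admitting discrete series representations, whose symmetric space (quaternionic hyperbolic $n$-space) has real dimension $4n$, Borel's $L^2$-index theorem (together with Cheeger--Gromov) yields, for the lattice $\Lambda_n=\Sp(n,1)_{\mathbb Z}$, that $\beta^{(2)}_j(\Lambda_n)=0$ for $j\neq 2n$ and $0<\beta^{(2)}_{2n}(\Lambda_n)<\infty$. By Gaboriau \cite{Ga02}, since the action $\Lambda_n\curvearrowright\mathbb T^{4(n+1)}$ is essentially free we get $\beta^{(2)}_j(\mathcal R_n)=\beta^{(2)}_j(\Lambda_n)$, and the $\ell^2$-Betti numbers of a p.m.p.\ equivalence relation scale by the inverse of the index under stable orbit equivalence. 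This already settles the single-factor case: if $\emm_n^{\,t}\cong\emm_n$ then, comparing the unique Cartan subalgebras on the two sides, $\mathcal R_n^{\,t}\cong\mathcal R_n$, hence $t^{-1}\beta^{(2)}_{2n}(\mathcal R_n)=\beta^{(2)}_{2n}(\mathcal R_n)$ with $0<\beta^{(2)}_{2n}(\mathcal R_n)<\infty$, so $t=1$; that is, $\mathcal F(\emm_n)=\{1\}$.

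For the general statement I would next observe that every $\emm_n$ is a non-amenable prime ${\rm II}_1$ factor: non-amenability is clear since $\emm_n$ has property $\T$ (Theorem~\ref{Valettegr}$(\rm i)$), and primeness follows because $\Lambda_n$, being a lattice in the rank one group $\Sp(n,1)$ — hence hyperbolic relative to its virtually nilpotent, so amenable, cusp subgroups — is bi-exact, so that $\emm_n=\Aa_n\rtimes\Lambda_n$ is solid relative to $\Aa_n$ (Ozawa, Ozawa--Popa); any tensor factorisation $\emm_n=\mathcal P\bar\otimes\mathcal Q$ into ${\rm II}_1$ factors would then force both $\mathcal P$ and $\mathcal Q$ to be amenable — neither can intertwine into the abelian $\Aa_n$, so the relatively amenable relative commutant of one contains the other — contradicting non-amenability of $\emm_n$. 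Now suppose $\El(G)\cong\El(H)^t$, with $G=G_{n_1}\times\cdots\times G_{n_k}$ and $H=G_{m_1}\times\cdots\times G_{m_l}$ in $\mathscr V$. Writing both sides as tensor products of non-amenable prime exact ${\rm II}_1$ factors (crossed products of amenable algebras by bi-exact groups) and invoking Ozawa--Popa's unique prime factorisation theorem, we get $k=l$ and, after a permutation $\pi$, $\ast$-isomorphisms $\emm_{n_i}\cong\emm_{m_{\pi(i)}}^{\,s_i}$ with $\prod_i s_i=t$. Applying uniqueness of the Cartan subalgebra to each of these isomorphisms turns it into a stable orbit equivalence of index $s_i$ between $\mathcal R_{n_i}$ and $\mathcal R_{m_{\pi(i)}}$; by Gaboriau the sets of degrees in which their $\ell^2$-Betti numbers are nonzero must coincide, i.e.\ $2n_i=2m_{\pi(i)}$, so $n_i=m_{\pi(i)}$. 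Hence the multisets $\{n_i\}$ and $\{m_j\}$ agree, so $G\cong H$; and each $s_i$ then lies in $\mathcal F(\emm_{n_i})=\{1\}$ by the single-factor case, whence $t=1$. This establishes both $(\rm i)$ and $(\rm ii)$. (Alternatively one may pass from the isomorphism to a measure equivalence between $\prod_i\Lambda_{n_i}$ and $\prod_j\Lambda_{m_j}$ and invoke the product rigidity for measure equivalence of Monod--Shalom and Sako, applied to the negatively curved groups $\Lambda_{n_i}$.)

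The step I expect to be the main obstacle is primeness of $\emm_n$ and verifying that the $\emm_{n_i}$ fall within the scope of the unique prime factorisation theorem: this requires bi-exactness of the non-uniform lattice $\Lambda_n$ (available since it is hyperbolic relative to amenable subgroups) together with a careful application of Ozawa's and Ozawa--Popa's relative solidity results to the non-unital intertwining statements that arise; one should also double-check the positivity $\beta^{(2)}_{2n}(\Lambda_n)>0$ for non-uniform lattices, which comes from the presence of discrete series for $\Sp(n,1)$ via Borel's formula. A smaller point is the bookkeeping of the possible $2$-cocycle in the Feldman--Moore correspondence between Cartan pairs and equivalence relations and of the finite centre $\{\pm\mathrm{id}\}\leqslant\Lambda_n$, neither of which affects the $\ell^2$-Betti invariants used above.
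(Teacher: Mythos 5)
Your part~$(\rm i)$ matches the paper's proof: uniqueness of the Cartan subalgebra from \cite{PV12} to pass to $\mathcal F(\mathcal R_n)$, then Gaboriau's $\ell^2$-Betti number scaling to force $t=1$; your identification of the relevant Betti number in degree $2n$ (middle dimension of $\mathbb H^n_{\mathbb H}$) is the correct index, which the paper misstates as ``$n$-th'' but this is harmless.

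For part~$(\rm ii)$, however, your main route takes a genuinely different path and contains a gap as written. You propose to establish that each $\emm_{n_i}$ is a non-amenable prime factor and then invoke Ozawa--Popa's unique prime factorisation theorem \cite{OP03}. But the scope of that theorem is tensor products of group factors $\El(\Gamma_1)\bar\otimes\cdots\bar\otimes\El(\Gamma_k)$ with each $\Gamma_i$ icc and bi-exact. The tensor factors here are $\emm_{n_i}=\El(G_{n_i})$ with $G_{n_i}=\mathbb Z^{4(n_i+1)}\rtimes\Lambda_{n_i}$, and $G_{n_i}$ is \emph{not} bi-exact: it contains an infinite amenable normal subgroup and is non-amenable, which rules out the Akemann--Ostrand property. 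So the hypotheses of \cite{OP03} fail, independently of the (correct) bi-exactness of $\Lambda_{n_i}$; your hedge about ``verifying scope'' points in the right direction but locates the difficulty in the wrong place. To make this route rigorous you would need a unique prime factorisation theorem for tensor products of crossed products of amenable algebras by bi-exact groups --- such results exist in the more recent literature (and the paper itself proves a closely related UPF statement in the last theorem of Section~5, relying on \cite{DHI16,CdSS17}), but they are not what \cite{OP03} delivers.

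The paper avoids this issue by doing the unique factorisation at the level of equivalence relations rather than von Neumann algebras: from an isomorphism $\El(G)\cong\El(H)^t$ it first applies \cite[Theorem~1.6]{PV12} (uniqueness of Cartan for products of weakly amenable bi-exact groups acting on probability spaces) to obtain a stable isomorphism of the product orbit equivalence relations; then it invokes \cite[Theorem~1.16]{MS02} (or \cite{Dr19}) to factor this into stable isomorphisms $\mathcal R_{\Lambda_{n_i}}\cong\mathcal R_{\Lambda_{m_{\pi(i)}}}^{\,t_i}$; finally \cite[Corollaire~0.4]{Ga02} (or \cite{CZ88}) forces $n_i=m_{\pi(i)}$ and $t_i=1$. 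Working with equivalence relations buys exactly what you need: it replaces the unavailable von Neumann algebraic UPF by Monod--Shalom/Drimbe product rigidity, which applies to products of lattices in rank one Lie groups directly. Your parenthetical alternative --- pass to a measure equivalence of $\prod_i\Lambda_{n_i}$ with $\prod_j\Lambda_{m_j}$ and invoke Monod--Shalom --- is essentially the paper's argument and is the route you should make primary.
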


\begin{proof}
	$(\rm i)$ Since $G \in \mathscr V$, then $G = G_{n_1} \times... \times G_{n_k}$, with $n_i \geq 2$ and $k\in \mathbb N$. Recall that for every $n \geq 2$,  $G_n =  \mathbb Z^{4(n + 1)} \rtimes \La_n \in \mathscr V$. First we show our statement for $k=1$, i.e. $\emm_n=\El(G_n)$ has trivial fundamental group. 
	To this end, let $\mathcal R_n$ be the orbit equivalence relation induced by the essentially free, ergodic probability measure preserving action $\Lambda_n \curvearrowright \mathbb T^{4(n + 1)}$. Thus $\mathcal L(\mathcal R_n) = \emm_n$ and \cite[Theorem 1.4]{PV12} implies that $\mathcal F(\emm_n) = \mathcal F(\mathcal R_n)$. Using Borel's result \cite{Bo83}, the $n$-th $\ell^2$-Betti number of $\Lambda_n$ is nonzero and finite. Therefore a combination of \cite[Corollaire 3.16]{Ga02} and \cite[Corollaire 5.7]{Ga02} implies that $\mathcal F(\mathcal R_n) = \{1\}$ and hence $\mathcal F(\emm_n) = \{1\}$. 
	\vskip 0.03 in
	The case $k\geq 2$ follows in a similar manner. Indeed let $m= \sum_{i=1}^k n_i$. Using Kunneth formula for $\ell^2$-Betti numbers, we see the $m$-th $\ell^2$-Betti number of $\La_{n_1} \times \cdots \times \La_{n_k}$ is nonzero and finite. Thus using \cite[Theorem 1.6]{PV12} and arguing exactly as in the previous paragraph, we get $\mathcal F(\El(G))= \{1\}$. 
	\vskip 0.07 in
	$(\rm ii)$  Let $G, H \in \mathscr V$ and $t>0$ such that $\mathcal L(G)\cong \El(H)^t$. Notice that $G = G_{n_1} \times ... \times G_{n_k}$ and $H= G_{m_i} \times ... \times G_{m_l}$, with $n_i,m_j \geq 2$. Denote by $\mathcal R_{\La_{n_1}\times ... \times \La_{n_k}}$ and $\mathcal R_{\La_{m_1}\times ... \times \La_{m_k}}$ the equivalence relations arising from the product actions $\times_i( \La_{n_i}\ca \mathbb Z^{4(n_i+1)})$ and $\times_i (\La_{m_i}\ca \mathbb Z^{4(m_i+1)})$, respectively.  Using \cite[Theorem 1.6]{PV12} we get the these equivalence relations are stably isomorphic, i.e. $\mathcal R_{\La_{n_1}\times ... \times \La_{n_k}}\cong (\mathcal R_{\La_{m_1}\times ...\times \La_{m_l}})^t$. Therefore using \cite[Theorem 1.16]{MS02} (see also \cite[Theorem ]{Dr19}) we have $k=l$ and after permuting the indices we have  $\mathcal R_{\La_{n_i}}\cong (\mathcal R_{\La_{m_i}})^{t_i}$ for some $t_1t_2...t_k=t$. However using \cite[Corollaire 0.4]{Ga02} (see also \cite{CZ88}) this further implies that $n_i = m_i$ and $t, t_1,t_2,...,t_k=1$; in particular, $G\cong H$.
\end{proof}

\noindent {\bf Remark.} We remark that we could have directly applied \cite[Theorem 4]{Va04} to the adjoint group of ${\rm Sp}(n, 1)$ in order to obtain examples of icc groups that satisfy the conclusion of the above theorem. Instead, we adapted the explicit and simpler construction given in \cite[Example 1, (a)]{Va04} to the case of ${\rm Sp}(n, 1)$.

\vskip 0.07in 
Our results  shed new light towards constructing non-isomorphic II$_1$ factors with property (T). While it is well known that there exist uncountably many pairwise non-isomorphic such factors, virtually nothing is known about producing explicit uncountable families. Indeed our Corollary \ref{fg1} and Theorem \ref{fgv} give such examples. 

\begin{cor} For any $G=N\rtimes Q \in \mathscr S$ or $G= G_{n_1}\times ...\times G_{n_k}\in \mathscr V$ then the set of all amplifications $\{\El(G)^t\,:\, t\in (0,\infty)\}$ consists of pairwise non-isomorphic II$_1$ factors with property (T). \end{cor}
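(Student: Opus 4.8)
The plan is to deduce this corollary formally from the two fundamental-group computations already carried out, namely Corollary~\ref{fg1} for the class $\mathscr S$ and part $(\mathrm i)$ of Theorem~\ref{fgv} for the class $\mathscr V$, combined with the classical fact that property (T) for ${\rm II}_1$ factors is stable under amplification.

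First I would record that $\El(G)$ itself is a ${\rm II}_1$ factor with property (T): for $G = N\rtimes Q\in\mathscr S$ this is part a) of Theorem~\ref{algprop} together with \cite{CJ}, and for $G = G_{n_1}\times\cdots\times G_{n_k}\in\mathscr V$ it follows from part $(\mathrm i)$ of Theorem~\ref{Valettegr}, since both the icc property and property (T) pass to finite direct products of groups. Next I would invoke the standard observation that if a ${\rm II}_1$ factor $\mathcal M$ has property (T) then so does $p\mathcal M p$ for every nonzero projection $p\in\mathcal M$ and so does $M_n(\mathbb C)\bar\otimes\mathcal M$; hence $\mathcal M^t$ has property (T) for every $t\in(0,\infty)$. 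Applying this to $\mathcal M=\El(G)$ shows that every member of the family $\{\El(G)^t : t\in(0,\infty)\}$ is a ${\rm II}_1$ factor with property (T).

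It then remains only to verify pairwise non-isomorphism. Suppose $\El(G)^s\cong\El(G)^t$ for some $0<s\le t$. Amplifying both sides by $1/s$ and using the identity $(\mathcal M^a)^b\cong\mathcal M^{ab}$ recalled in the introduction, one obtains $\El(G)\cong\El(G)^{t/s}$, that is, $t/s\in\mathcal F(\El(G))$. By Corollary~\ref{fg1} when $G\in\mathscr S$, respectively by part $(\mathrm i)$ of Theorem~\ref{fgv} when $G\in\mathscr V$, we have $\mathcal F(\El(G))=\{1\}$, whence $t/s=1$ and therefore $s=t$. This establishes the claim. There is essentially no serious obstacle in this argument: the statement is a direct consequence of the fundamental-group computations, the only additional ingredient being the elementary and well-known stability of property (T) under passing to amplifications.
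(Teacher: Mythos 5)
Your argument is correct and is exactly the intended deduction: the paper leaves this corollary without a separate proof, treating it as an immediate consequence of Corollary~\ref{fg1} and Theorem~\ref{fgv} together with the standard facts that property~(T) is preserved under compression to $p\mathcal M p$ and under tensoring with matrix algebras. Your write-up simply makes explicit the two ingredients (property~(T) passes to amplifications; $\El(G)^s\cong\El(G)^t$ forces $t/s\in\mathcal F(\El(G))=\{1\}$), so it matches the paper's approach.
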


We end this section with a unique prime factorization result of independent interest regarding groups in class $\mathscr V$. We notice this can also be employed to bypass the usage of \cite[Theorem 1.16]{MS02} in Theorem \ref{fgv}. For the proof we will use  techniques similar to the recent methods used to classify tensor product decompositions of II$_1$ factors, \cite{DHI16,CdSS17,Dr19}.

\begin{theorem} Assume that $G=G_{n_1}\times ...\times G_{n_k}\in \mathscr V$. Assume that $\emm=\El(G)=\mathcal P_1\bar\otimes\mathcal P_2$ for any $P_i$ s II$_1$ factors. Then one can find a partition $I_1\sqcup I_2=\{1,...,k\}$, a unitary $u\in \emm$ and positive scalars $t_1,t_2$ with $t_1t_2=1$ such that $\El(\times_{i\in I_1}G_{n_i})=uP_1^{t_1}u^*$ and $\El(\times_{i\in I_2}G_{n_i})=uP_2^{t_2}u^*$.
\end{theorem}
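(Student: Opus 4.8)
The plan is to adapt the now-standard strategy for unique prime factorization of II$_1$ factors coming from products of rigid groups (as in \cite{DHI16,CdSS17,Dr19}), using in an essential way that each $\emm_{n_i} = \El(G_{n_i})$ has a \emph{unique} Cartan subalgebra $\Aa_{n_i}$ up to unitary conjugacy (Theorem~\ref{Valettegr}(ii)) and property (T). Write $\emm = \El(G) = \emm_{n_1} \bar\otimes \cdots \bar\otimes \emm_{n_k} = \Pl_1 \bar\otimes \Pl_2$, and set $\Aa = \Aa_{n_1}\bar\otimes\cdots\bar\otimes\Aa_{n_k} = L^\infty(\mathbb T^{4(n_1+1)}\times\cdots)$, which is a Cartan subalgebra of $\emm$ with $\mathscr N_\emm(\Aa)'' = \emm$ since the product action $\times_i(\La_{n_i}\ca\mathbb T^{4(n_i+1)})$ is essentially free and ergodic. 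First I would analyze how the two putative tensor factors $\Pl_1$ and $\Pl_2$ sit relative to the subfactors $\emm_{n_i}$: the key dichotomy (obtained from the intertwining technology of \cite{IPP05,IPV10} together with the malnormality-type rigidity properties of the $G_{n_i}$, exactly as in \cite[Section 4]{DHI16} or \cite{CdSS17}) is that for each $i$ either $\Pl_1 \prec_\emm \bar\otimes_{j\neq i}\emm_{n_j}$ or $\Pl_2 \prec_\emm \bar\otimes_{j\neq i}\emm_{n_j}$. This uses that $\Lambda_{n_i}$ is a rank-one lattice with property (T), hence bi-exact/weakly amenable and in particular that $\emm_{n_i}$ is prime and its subfactors cannot split in incompatible ways.

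The second step is to turn this intertwining dichotomy into an honest partition. Running the dichotomy over all $i \in \{1,\dots,k\}$ produces a partition $I_1 \sqcup I_2 = \{1,\dots,k\}$ such that, after grouping, one shows $\Pl_1 \prec_\emm^s \bar\otimes_{i\in I_1}\emm_{n_i}$ and $\Pl_2 \prec_\emm^s \bar\otimes_{i\in I_2}\emm_{n_i}$, and by symmetry the reverse intertwinings hold as well (here one must rule out the degenerate case where some $\Pl_j$ intertwines into a proper subproduct on \emph{both} sides, which would force $\Pl_j$ amenable, contradicting that $\emm$ has property (T) and hence so do its tensor factors by \cite{Po01}). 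Having two-sided intertwining between factors, one invokes the by-now-routine argument (the first part of the proof of \cite[Lemma 8.4]{IPP05}, together with \cite[Lemma 3.1]{Po02} to control relative commutants) to upgrade $\prec$ to an honest unitary conjugacy up to amplification: there is a unitary $u \in \emm$ and $t_1 t_2 = 1$ with $u\Pl_1^{t_1}u^* = \bar\otimes_{i\in I_1}\emm_{n_i} = \El(\times_{i\in I_1}G_{n_i})$ and $u\Pl_2^{t_2}u^* = \bar\otimes_{i\in I_2}\emm_{n_i} = \El(\times_{i\in I_2}G_{n_i})$.

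The main obstacle I expect is the very first step — establishing the intertwining dichotomy with the \emph{correct} target subalgebras $\bar\otimes_{j\neq i}\emm_{n_j}$ rather than merely $\bar\otimes_{j\neq i}\emm_{n_j} \rtimes (\text{pieces of }\Lambda)$ — because the groups $G_{n_i} = \mathbb Z^{4(n_i+1)}\rtimes\Lambda_{n_i}$ are not themselves hyperbolic or bi-exact, only the acting lattices $\Lambda_{n_i}$ are. The way around this is to first use the uniqueness of Cartan subalgebra (Theorem~\ref{Valettegr}(ii)) to transport the problem to the level of the equivalence relations $\mathcal R_{\Lambda_{n_i}}$, where the relevant rigidity (primeness of the product equivalence relation, controlled by $\ell^2$-Betti numbers via \cite{Ga02} and by \cite[Theorem 1.6]{PV12}) is available, or alternatively to exploit that $\Lambda_{n_i}$ being a lattice in $\mathrm{Sp}(n_i,1)$ is bi-exact and weakly amenable so that $\emm_{n_i}$ is prime and the Ozawa-type solidity arguments apply to $\emm_{n_i} \ominus \Aa_{n_i}$. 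Once the dichotomy is anchored at the subfactors $\emm_{n_i}$ themselves, the remaining combinatorics and the conjugacy upgrade are standard; the bookkeeping needed to handle the case $|I_1|$ or $|I_2|$ empty (which cannot occur since $\Pl_1,\Pl_2$ are diffuse and $\emm$ is not prime only because $k \geq 2$) is a minor technicality.
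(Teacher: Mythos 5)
Your high-level roadmap (coordinate-wise intertwining dichotomy, then a partition, then two-sided intertwining upgraded to conjugacy) matches the paper's, but the way the key dichotomy is actually established is different from both of your proposed workarounds, and the difference matters. You correctly flag that the obstacle is that $G_{n_i}$ itself is not hyperbolic, only $\Lambda_{n_i}$ is. But the paper neither transports to equivalence relations nor invokes any solidity property of $\emm_{n_i}$. Instead, for each $i$ it views $\emm$ as the crossed product $\emm = \bigl(\El(G_{I\setminus\{i\}}) \,\bar\otimes\, \Aa_i\bigr) \rtimes \Lambda_i$, so that $\Lambda_i$ is the \emph{sole} acting (hyperbolic) group, and then applies the amenable-subalgebra dichotomy \cite[Theorem 1.4]{PV12} (not the unique-Cartan statement, Theorem~1.1): every diffuse amenable $\bee \subset \mathcal C$ either intertwines into the core $\El(G_{I\setminus\{i\}}) \,\bar\otimes\, \Aa_i$, or has a normalizer that is amenable relative to it. Property (T) of $\mathcal D$ (contained in that normalizer) rules out the second branch, and \cite[Corollary F.14]{BO08} sweeps up the first, so one of $\mathcal C, \mathcal D$ lands in $\El(G_{I\setminus\{i\}}) \,\bar\otimes\, \Aa_i$. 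The downgrade to the target $\El(G_{I\setminus\{i\}})$ is then not an equivalence-relation or solidity argument at all, but a short property-(T)-versus-amenability argument: the image of the intertwining has property (T), so it cannot sit non-degenerately over the amenable factor $\Aa_i$, hence intertwines further into $\El(G_{I\setminus\{i\}})$; one then composes the two partial isometries carefully (keeping a support condition to avoid the composite vanishing). Finally, the paper's conjugacy upgrade uses \cite[Lemma 4.2]{CdSS17} plus an argument that the relative commutant is finite-dimensional (again via property (T) and the dichotomy applied to the commutant), and then \cite[Theorem 6.1]{DHI16}; this is in the same spirit as your \cite[Lemma 8.4]{IPP05}/\cite[Lemma 3.1]{Po02} route but is the version tailored to tensor decompositions. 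So: same skeleton, but the decisive rigidity input is \cite[Theorem 1.4]{PV12} applied to a coordinate-wise crossed-product decomposition, combined with property (T) to eliminate the $\Aa_i$ corner, not any of the routes you sketched, and the final conjugacy step goes through the CdSS17/DHI16 machinery rather than IPP05.
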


\begin{proof} Throughout this proof for every subset $F\subset \{1,...,k\}$ we denote its complement by $\overline F= \{1,...,k\}\setminus F$ and by $G_F=\times_{i\in F}G_{n_i}$ the sub-product group of $G$ supported on $F$. Using these notations we first prove the following \begin{claim}\label{intoneless} For every $I\subseteq \{1,...,k\}$ and $e \in\mathscr P(\El(G_I))$ assume that $\mathcal C,\mathcal D\subseteq e\El(G_I)e$ are two commuting diffuse property (T) von Neumann subalgebras.  Then for every $i\in I$ we have either $ \mathcal C\prec_\emm \El(G_{I\setminus \{i\}})$, or $\mathcal D\prec_\emm \El(G_{I\setminus \{i\}})$.
	\end{claim}
	
	\noindent \textit{Proof of Claim \ref{intoneless}}. Fix $\bee\subset \mathcal C$  an arbitrary diffuse amenable von Neumann subalgebra. Writing $\emm= (\El(G_{I \setminus \{i\}})\bar\otimes \Aa_i) \rtimes \La_i$ and using \cite[Theorem 1.4]{PV12} we have either \begin{enumerate}\item [a)] $\bee \prec_\emm\El(G_{I \setminus \{i\}})\bar\otimes \Aa_i$ or \item [b)] $\mathscr N_\emm (\bee )''$ is amenable relative to $\El(G_{I \setminus \{i\}})\bar\otimes \Aa_i$ inside $\emm$. 
	\end{enumerate}
	Since $\mathcal D\subseteq \mathscr N_\emm (\bee )''$ and $\mathcal D$ has property (T) then b) implies that  $\mathcal D \prec_\emm \El(G_{I \setminus \{i\}})\bar\otimes \Aa_i$. Also if a) holds for all such $\bee$'s then by \cite[Corollary F.14]{BO08} we have $\mathcal C \prec_\emm \El(G_{I \setminus \{i\}})\bar\otimes \Aa_i$.
	Next assume that $\mathcal C \prec_\emm \El(G_{\bar i})\bar\otimes \Aa_i$. Thus there exist projections $p\in \mathcal C , q\in\El(G_{I \setminus \{i\}})\bar\otimes \Aa_i$  a partial isometry $w\in \emm$ and a $\ast$-isomorphism on its image $\phi: p\mathcal C p \rar \mathcal Q: =\phi(p\mathcal C p)\subseteq q (\El(G_{I \setminus \{i\}})\bar\otimes \Aa_i )q$ such that \begin{equation}\label{inteq2}\phi(x)w=wx\text{  for all }x\in p\mathcal C p.\end{equation} We also have that $w^*w\in \mathcal D p$ and $ww^*\in \mathcal Q'\cap q\emm q$ and we can arrange that the support satisfies ${\rm sup} (E_{\El(G_{I \setminus \{i\}})\bar\otimes \Aa_i }(ww^*))=q$. Since $\mathcal C$ has property (T) then so does $p\mathcal C p$ and also $Q$. Since $Q\subseteq \El(G_ {I \setminus \{i\}})\bar \otimes \mathcal A_i$ and $\mathcal A_i$ is amenable then we have that $\mathcal Q\prec_{\El(G_{I \setminus \{i\}})\bar \otimes \mathcal A_i} \El(G_{I \setminus \{i\}})$. Therefore one can find projections $r\in \mathcal Q , t\in\El(G_{I \setminus \{i\}})$  a partial isometry $v\in \El(G_{I \setminus \{i\}})\bar \otimes \mathcal A_i$ and a $\ast$-isomorphism on its image $\psi: r\mathcal Q r \rar  t \El(G_{I \setminus \{i\}})t$ such that 
	\begin{equation}\label{inteq1}\psi(x)v=vx\text{  for all }x\in r\mathcal Q r.\end{equation}
	Letting $s:= \phi^{-1}(r)\in \mathcal C$ the equations \eqref{inteq2}-\eqref{inteq1} show that for every $y\in s\mathcal C s$ we have  $ \psi(\phi(y))vw = v\phi(y)w= vw y$. Moreover, using ${\rm sup} (E_{\El(G_{I \setminus \{i\}})\bar\otimes \Aa_i }(ww^*))=q$ and $v\in \El(G_{I \setminus \{i\}})\bar\otimes \Aa_i$ a simple calculation shows that $vw\neq 0$. Altogether, these imply that $\mathcal C \prec_\emm \mathcal L(G_{I \setminus \{i\}}) $.
	\vskip 0.04in
	
	In a similar fashion,  $\mathcal D \prec_\emm \mathcal L(G_{I \setminus \{i\}})\bar \otimes \mathcal A_i $ implies $\mathcal D \prec_\emm \mathcal L(G_{I \setminus \{i\}}) $, and the claim obtains.$\hfill\blacksquare$
	\vskip 0.07in 
	In the remaining part we derive the conclusion of the theorem. Since the subgroups $G_{\bar {i}}$ are normal in $G$ then using \cite[Lemma 2.6]{DHI16} and the Claim \ref{intoneless} (for $\mathcal C=\mathcal P_1$ and $\mathcal D=\mathcal P_2$) inductively one can find nonempty minimal subsets $I_1,I_2\subsetneq \{1,...,k\}$ such that \begin{equation}\label{twosint}\mathcal P_1\prec^s_\emm \El(G_{I_1}) \text{ and  }\mathcal P_2\prec^s_\emm \El(G_{I_2}).\end{equation} 
	As $\mathcal P_1 \vee \mathcal P_2=\emm$ and $\mathcal P_1\prec_\emm \El(G_{I_1})$ then  using \cite[Lemma 4.2]{CdSS17} one can find projections $p \in \mathcal P_1 , f \in  L(G_{I_1})$, a partial isometry $w \in  \emm$, and a unital injective $\ast$-homomorphism $\theta : p\mathcal P_1p \rar \mathcal R:= \theta(pPp) \subseteq f \El(G_{I_1})f$ such that\begin{enumerate}
		\item [a)] $\theta(x)w=wx$ forall $x\in p\mathcal P_1p$;
		\item [b)] ${\rm sup}(\El( G_{I_1})(ww^*)) = f$;
		\item [c)]  $\mathcal R \vee (\mathcal R' \cap f \El(G_{I_1} )f) \subseteq f\El(G_{I_1})f$ is a finite index irreducible inclusion of II$_1$ factors. 
	\end{enumerate}
	
	Since $\mathcal L(G_{I_1})$ has property (T) then so is $f\mathcal L(G_{I_1})f$. Using the finite index condition in part c) above we conclude that $\mathcal R \vee (\mathcal R' \cap f \El(G_{I_1} )f)$ has property (T); in particular $\mathcal R$ and  $\mathcal R' \cap f \El(G_{I_1} )f$ are commuting property (T) subfactors of $ f \El(G_{I_1})f$. 
	
	Next we claim that $\mathcal T:=\mathcal R' \cap f \El(G_{I_1} )f$ is finite dimensional. Assume by contradiction it is not. Thus $\mathcal T$ is a diffuse property (T) factor. Note that $\mathcal N_{f \El(G_{I_1} )f}(\mathcal T)' \cap f\El(G_{I_1})f \subseteq \mathcal (R \vee (\mathcal R' \cap f \El(G_{I_1} )f))' \cap f \El(G_{I_1} )f= \mc f $. By Claim \ref{intoneless}, and \cite[Lemma 2.4(2)]{DHI16}, we get that for every $i\in I$ we have either i) $ \mathcal R\prec^s_\emm \El(G_{I\setminus \{i\}})$, or ii) $\mathcal T\prec^s_\emm \El(G_{I\setminus \{i\}})$. If possibility ii) would hold for all $i \in I_1$, since $G_{I_1\setminus \{i\}}$
	is normal in $\mathcal G_{I_1}$ then by \cite[Lemma 2.6]{DHI16} we would have  $\mathcal T \prec_{\mathcal L(G_{I_1})} \cap_{i\in I_1} \mathcal L (G_{I_1\setminus \{i\}})=\mathbb C1$, which is a contradiction. Hence there must be $j\in I_1$ such that $\mathcal R \prec^s \mathcal L(G_{I_1\setminus \{j\}})$. However, by \cite[Lemma 3.1]{Va07} this would further imply that $\mathcal P_1 \prec^s \mathcal L(G_{I_1 \setminus \{j\}})$ which contradicts the minimality of $I_1$. Thus our assumption that $\mathcal T$ is diffuse is false. 
	
	Now since $\mathcal T$ is finite dimensional condition c) gives in particular that $\mathcal R \subseteq f \El(G_{I_1})f$ is finite index and hence $\mathcal L(G_{I_1})\prec_\emm \mathcal P_1$ and since $L(G_{I_1})$ has property (T) we conclude that $\mathcal L(G_{I_1})\prec^s_\emm \mathcal P_1$. Proceeding in a similar fashion we also get $\mathcal L(G_{I_2})\prec^s_\emm \mathcal P_2$. Altogether, these show that $I_1\sqcup I_2=\{1,...,n\}$ is a proper partition. Therefore using these intertwinings in combination with \eqref{twosint} by \cite[Theorem 6.1]{DHI16} one can find a product decomposition $G= \G_1\times \G_2$ such that $\G_i$ is commensurable with $G_{I_i}$  for all $i=1,2$ and there exist a unitary $u\in \emm$ and scalars $t_1t_2=1$ such that $\El(\G_1) =u(P_1^{t_1})u^*$ and $\El(\G_2)= u( P_2^{t_2})u^*$. Finally, since for every $i=1,2$ the group $\G_i$  is commensurable to $G_{I_i}$ and $G$ is icc, torsion free one can check that in fact $\G_i = G_{I_i}$ and the desired conclusion follows.\end{proof}

An alternative proof of the above theorem can be given by using the notion of spatially commensurable von Neumann algebras \cite[Definition 4.1]{CdSS17} together with the results  \cite[Lemma 4.2, Theorems 4.6-4.7]{CdSS17}. This uses essentially the same arguments as before and  bypasses the usage of \cite[Theorem 6.1]{DHI16}.  We leave the details to the reader. 

\section*{Acknowledgments}
The authors would like to thank Adrian Ioana, Jesse Peterson and Stefaan Vaes for many helpful comments and suggestions regarding this paper.
Part of this work was completed while the fourth author was visiting the University of Iowa. He is grateful to the mathematics department there for their hospitality.

\noindent
\textsc{Department of Mathematics, The University of Iowa, 14 MacLean Hall, Iowa City, IA 52242, U.S.A.}\\
\email {ionut-chifan@uiowa.edu} \\
\email{sayan-das@uiowa.edu}\\
\textsc{Universit\'e Paris-Saclay, CNRS, Laboratoire de math\'ematiques d'Orsay, 91405, Orsay, Institut Universitaire de France, FRANCE}\\
\textsc{Graduate School of Mathematical Sciences, The University of Tokyo, Komaba, Tokyo, 153-8914, JAPAN}\\
\email{cyril.houdayer@universite-paris-saclay.fr}\\
\textsc{Department of Mathematics, Vanderbilt University, 1326 Stevenson Center, Nashville, TN 37240, U.S.A.}\\
\email{krishnendu.khan@vanderbilt.edu}

\end{document}